\newtheorem*{thm*}{Theorem}
\newtheorem{thm}{Theorem}[section]
\newtheorem{lem}[thm]{Lemma}
\newtheorem{prop}[thm]{Proposition}
\newtheorem{cor}[thm]{Corollary}
\theoremstyle{definition}
\newtheorem{definition}[thm]{Definition}
\newtheorem{rem}[thm]{Remark}
\newcommand{\Z}{\mathbb{Z}}
\newcommand{\Q}{\mathbb{Q}}
\newcommand{\R}{\mathbb{R}}
\newcommand{\C}{\mathbb{C}}
\newcommand{\san}{\emph{sangaku}}
\newcommand{\Gal}{\operatorname{Gal}}
\newcommand{\Res}{\operatorname{Res}}
\newcommand\aang{\theta}
\newcommand\aalft{\beta}
\newcommand\aalftz{\beta_0}
\newcommand\cca{C_1}
\newcommand\ccb{C_r}
\newcommand\con{Con}
\newcommand\fa{g}
\newcommand\fb{f_4}
\newcommand\fc{f_3}
\newcommand\fd{f_5}
\newcommand\fe{f_1}
\newcommand\ff{f_2}
\newcommand\fg{f_6}
\newcommand\fh{f_7}
\newcommand\lli{L}
\newcommand\ooa{O_1}
\newcommand\oob{O_r}
\newcommand\rt{k}
\newcommand\sside{s}
\newcommand\tta{T_1}
\newcommand\ttb{T_r}
\newcommand\ttl{T_L}
\newcommand\vva{V_1}
\newcommand\vvb{V_r}
\newcommand\vvdn{V_\textsubscript{dn}}
\newcommand\vvup{V_\textsubscript{up}}
\newcommand\xt{x}
\newcommand\yp{y_{_P}}
\begin{document}

\title{Morikawa's Unsolved Problem}
\markright{Morikawa's Unsolved Problem}
\author{Jan E. Holly and David Krumm}
\maketitle

\begin{abstract}
By combining theoretical and computational techniques from geometry, calculus, group theory, and Galois theory, we prove the nonexistence of a closed-form algebraic solution to a Japanese geometry problem first stated in the early nineteenth century. This resolves an outstanding problem from the \san\, tablets which were at one time displayed in temples and shrines throughout Japan.
\end{abstract}

\section{Introduction.}\label{intro_section} 
During the Edo Period of Japanese history (1603--1867) there developed a curious practice of hanging wooden tablets with mathematical content from the eaves of Buddhist temples and Shinto shrines. Many of these tablets, known as \san, have been lost to history, but close to 900 of them have been preserved \cite{rothman}. The problems inscribed on the surviving \san\, are mostly of a geometric nature, and they range in difficulty from trivial to unsolved. Solutions to many of these problems can be found in the books by Fukagawa and Pedoe \cite{fukagawa-pedoe} and Fukagawa and Rothman \cite{fukagawa-rothman}; the latter reference also discusses various historical aspects surrounding the mathematics of the Edo Period.

Unsolved \san\ problems seem to be rare; in fact, we are aware of only two such problems listed in the literature, both in Fukagawa and Rothman \cite[Chapter~7]{fukagawa-rothman}. One of the problems mentioned in that text was originally proposed in 1821 and has recently been solved \cite{kinoshita}. The present article concerns the other unsolved problem, which was proposed by Jihei Morikawa during the same time period.

\begin{figure}[b]
  \centering
\includegraphics[width=0.85\linewidth]{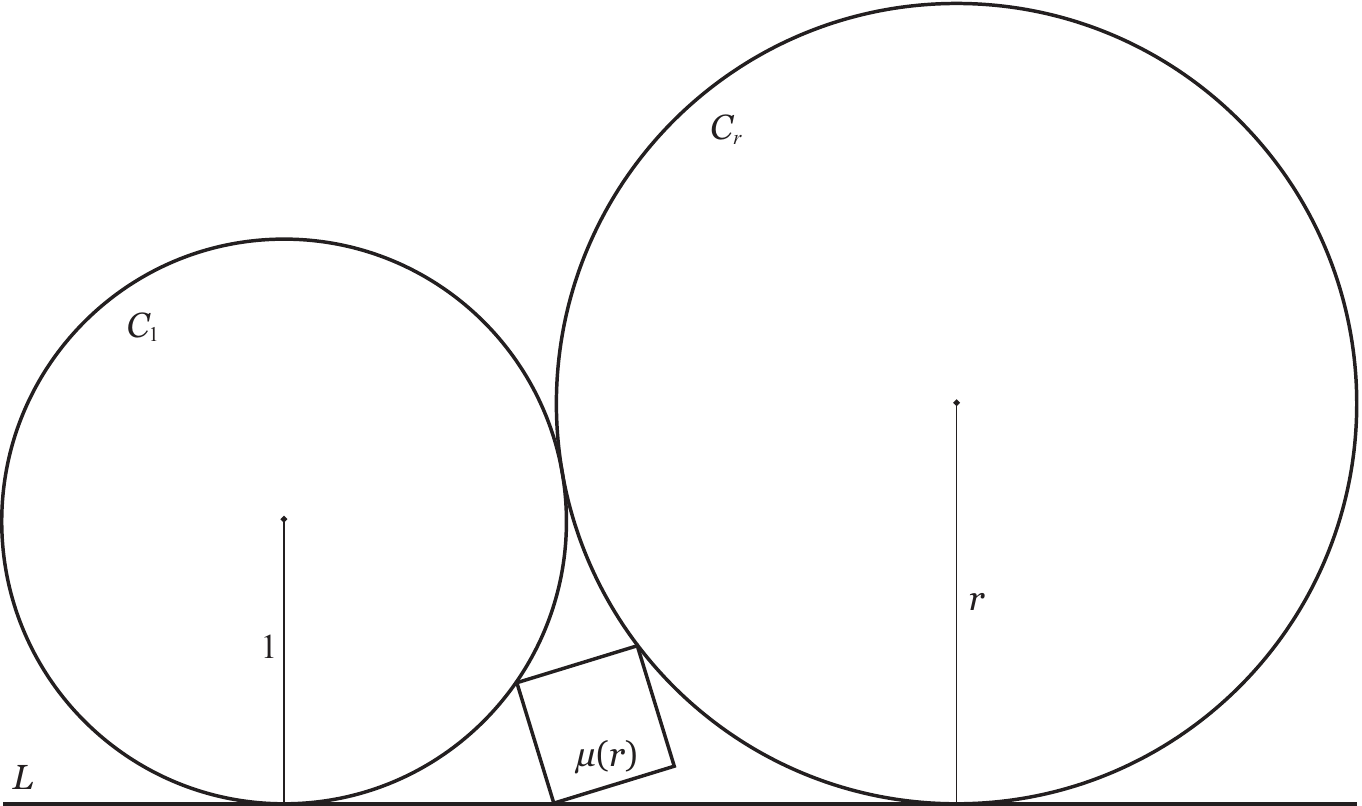}
\caption{The circles $C_1$ and $C_r$, the line $L$, and the minimal inscribed square with side length $\mu(r)$. The original statement of the problem involves circles of radii $a$ and $b$ with $b\ge a$, but a scaling of the plane reduces this general case to the case $a=1$.}
\label{morikawa_figure}
\end{figure}

The main objects involved in Morikawa's problem are illustrated in Figure~\ref{morikawa_figure}. Given a line $L$ and circles $C_1$ and $C_r$ of radii 1 and $r\ge 1$, respectively, such that $C_1$ and $C_r$ are tangent to each other and to $L$, the problem asks us to express, in terms of $r$, the minimum side length $\mu(r)$ of a square that can be inscribed in the region between $C_1,C_r$, and $L$.  Here, ``inscribed'' means touching all three of $C_1,C_r$, and $L$.

A surviving travel diary of mathematician Kanzan Yamaguchi, a contemporary of Morikawa, includes an entry  with some additional information about Morikawa's problem and the \san\ containing it. Based on that entry, Fukagawa and Rothman \cite[p.~265]{fukagawa-rothman} report the following.

\begin{quotation}
\noindent The tablet contained no solution, but Morikawa had written, ``I will be very happy if someone can solve this problem.'' And so, says Yamaguchi, ``I went to Morikawa's home with my friend Takeda and asked him what the answer is. He said that he could not solve the problem yet.'' Neither does Yamaguchi's diary contain a solution and, like Morikawa, we would be very happy if someone solves this problem. \cite[p.~265]{fukagawa-rothman}
\end{quotation}

It seems surprising that Morikawa's problem would have frustrated all attempts at a solution, considering that the mathematicians of the Edo Period had a strong understanding of geometry, were adept in the use of algebra, and even had some knowledge of basic calculus. One begins to wonder whether the problem can in fact be solved.

The purpose of this article is to address the question of the existence of a closed-form expression for $\mu(r)$. From our analysis in Section \ref{polynomial_section} it follows that $\mu(r)$ is a root of a polynomial whose coefficients are polynomials in $r$; in light of this fact, a natural question is whether $\mu(r)$ is expressible by radicals in terms of~$r$. Precise terminology is defined below, but the question can be stated intuitively as follows: Is there a radical expression, such as
\[
\frac{\sqrt[11]{3r-\pi}\cdot\sqrt[3]{r^5-r+2i}+\sqrt[4]{1+i+\sqrt[5]{r^6-7}}}{3-\sqrt{2r^3+9r-5}},
\]
that for every real number $r\ge 1$ can be evaluated to yield $\mu(r)$?

We provide here a negative answer to this question, thus showing that a closed-form algebraic solution does not exist in the classical sense.
In order to state our results we introduce the following terminology. Recall that if $J\subseteq\C$ is a nonempty set and $f:J\to\C$ is a function, we say that $f$ is an \emph{algebraic function} if there exists a nonzero polynomial $q\in\C[k,x]$ such that $q(c, f(c))=0$ for every $c\in J$. If, moreover, this condition is satisfied by a polynomial $q$ whose Galois group over the field $\C(k)$ is solvable --- or equivalently, whose splitting field is contained in a radical extension of~$\C(k)$ --- then we say that $f$ is a \emph{radical function}.

We can now state our main result.

\begin{thm*}[see Theorem \ref{main_thm}]
The function $\mu:[1,\infty)\to\R$ is not radical. In fact, there is no infinite subset $J\subseteq [1,\infty)$ such that $\mu:J\to\R$ is radical.
\end{thm*}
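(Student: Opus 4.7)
The plan is to exploit the Galois-theoretic characterization of radicality and reduce the problem to a concrete computation. Let $P(r,x) \in \Q[r,x]$ denote the polynomial from Section~\ref{polynomial_section}, so that $P(c,\mu(c)) = 0$ for every $c \ge 1$. After replacing $P$ by a suitable factor, I may assume $P$ is irreducible over $\Q(r)$ and still vanishes on infinitely many pairs $(c, \mu(c))$. A preliminary step is to verify that $P$ is absolutely irreducible, i.e., irreducible over $\C(r)$; this is a finite check that can be carried out computationally.

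Now suppose, toward a contradiction, that $\mu$ is radical on some infinite $J \subseteq [1,\infty)$, witnessed by a polynomial $q(k,x)$ whose Galois group over $\C(k)$ is solvable. Since $P$ and $q$ share infinitely many common zeros $(c, \mu(c))$, the resultant $\Res_x(P, q) \in \C[r]$ vanishes identically, so $P$ and $q$ share a nontrivial common factor in $\C(r)[x]$; absolute irreducibility of $P$ then forces $P \mid q$. The splitting field of $P$ over $\C(r)$ therefore lies inside that of $q$, and $\Gal(P/\C(r))$ is a quotient of the solvable group $\Gal(q/\C(k))$, so it is itself solvable. Both assertions of the theorem will then follow from a single statement: $\Gal(P/\C(r))$ is \emph{not} solvable.

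To establish non-solvability I appeal to specialization. For any rational $r_0$ outside a finite exceptional set (zeros of the discriminant and of the leading coefficient of $P$), the Galois group $\Gal(P(r_0, x)/\Q)$ embeds, up to conjugation, as a subgroup of $G := \Gal(P/\Q(r))$. Since $P$ is absolutely irreducible of degree $n$ in $x$, both $G$ and its normal subgroup $H := \Gal(P/\C(r))$ act transitively on the $n$ roots of $P$. If I can locate a rational $r_0$ whose specialization has Galois group $S_n$ or $A_n$ with $n \ge 5$, then since $G \subseteq S_n$ we conclude $G \in \{A_n, S_n\}$, and the structure of normal subgroups in these groups forces the transitive normal subgroup $H$ to contain $A_n$, which is non-solvable.

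The principal obstacle is the explicit search for such a rational $r_0$. Given the formula for $P(r, x)$, the search proceeds by reducing $P(r_0, x)$ modulo many primes for each candidate $r_0$ and applying Dedekind's theorem: the factorization type mod $p$ gives the cycle type of a Frobenius element, and locating both a transposition and an $n$-cycle (together with transitivity) identifies the group as $S_n$. The feasibility of the search depends heavily on the explicit shape of $P$ produced in Section~\ref{polynomial_section}; for a polynomial of high degree in $x$, both the absolute irreducibility verification and the Galois-group identification will require nontrivial computer-algebra support, and may demand experimentation to find an $r_0$ whose specialization attains maximal Galois group.
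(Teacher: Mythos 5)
There is a genuine gap right at the start, and it propagates through the whole argument. You write ``Let $P(r,x)\in\Q[r,x]$ denote the polynomial from Section~4, so that $P(c,\mu(c))=0$ for every $c\ge 1$,'' but the degree-10 polynomial produced in Proposition~\ref{polyequation} annihilates the \emph{minimizing point} $x_m(r)$, not the minimum value $\mu(r)$. That is, it satisfies $p(k,x_m)=0$ where $\mu(r)=z(x_m)$; it does not satisfy $p(k,\mu)=0$. Running your resultant-plus-Galois argument verbatim on $p$ would at best show that $x_m$ is not a radical function of $r$. That conclusion does not transfer to $\mu$, because $\mu=z(x_m)$ is a (nested-radical) image of $x_m$, and a non-radical quantity can perfectly well have a radical image. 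You need a way either to produce a polynomial annihilating $\mu$ directly, or to show that the field generated by a root of $p$ is contained in the field generated by the corresponding value of $\mu$. The paper does the latter: it introduces an auxiliary polynomial $h(k,x,y)$ with $h(k,\xi(k),\lambda(k))=0$ (where $\lambda=\mu^2$ after the substitution $k=\sqrt r$), and Lemma~\ref{alpha_beta_extensions} --- proved by a nontrivial Magma computation factoring $h(k,\alpha,y)$ over $\Q(k,\alpha)$ and comparing field degrees --- establishes $\C(k,\alpha)\subseteq\C(k,\beta)$. Only then can the splitting field of $p$ be shown to lie inside the splitting field of the hypothetical solvable witness $q$. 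This containment lemma is the crux of the proof and is entirely missing from your proposal.

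Two further remarks. First, your route to non-solvability of the geometric Galois group --- specialize $r\mapsto r_0$, identify $S_n$ or $A_n$ via Dedekind/Frobenius factorization types, then use the embedding of the specialized group and the transitivity of the geometric Galois group to force it to contain $A_n$ --- is a legitimate and genuinely different method from the paper's, which instead invokes Sutherland's geometric Galois group algorithm directly to get $\Gal(p/\C(k))\cong S_{10}$. Your approach is more elementary (Dedekind's theorem plus specialization) and would be a reasonable alternative once the correct polynomial is in hand. Second, the paper replaces $\mu$ by $\lambda(c)=(\mu(c^2))^2$, justified by Lemma~\ref{radical_lemma}, to clear square roots before setting up the algebra; you would want to make a similar reduction (and justify it) before the explicit computations become tractable.
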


The proof of this theorem makes critical use of computational tools in Galois theory that have only recently become available due to work of N. Sutherland \cite{sutherland}. In particular, our argument relies on Sutherland's implementation in the system \textsc{Magma} \cite{magma} of an algorithm for computing geometric Galois groups. Besides this algorithm, the proof uses elementary geometry as well as calculus and Galois theory.

This article is organized as follows. In Section \ref{parametrization_section} we provide notation and basic results about inscribed squares.
In Section \ref{orientation_section} we show that any minimal inscribed square must be positioned as in Figure~\ref{morikawa_figure}, with a corner on each of $C_1, C_r$, and $L$, and with no side of the square tangent to these objects. In Section \ref{polynomial_section} we derive an explicit formula for a function whose minimum value is~$\mu(r)$; as a byproduct we obtain a numerical method for approximating $\mu(r)$ given the radius $r$. In addition, we show that $\mu(r)$ can be expressed in terms of a root of a certain polynomial of degree 10. Finally, in Section \ref{algebraic_section} we use Galois theory to study this polynomial and thus prove the main theorem.

\section{Configurations for inscribed squares.}\label{parametrization_section}
Figure~\ref{notation} shows the notation that will be used throughout the article, regardless of which inscribed square is under discussion.  Denoted are the circles ($\cca$,~$\ccb$), centers of the circles ($\ooa$,~$\oob$), line ($\lli$), vertices of the square ($\vva$, $\vvb$, $\vvup$, $\vvdn$), and angle between $\lli$ and the lower right side of the square ($\aang\in [0,\pi/2)$). If $\aang=0$, then $\vvdn$ is the lower left vertex. To facilitate phrasing, the line is considered horizontal as shown, with $\cca$ on the left.

Most of the discussion in Sections \ref{parametrization_section} and \ref{orientation_section} assumes an arbitrary but fixed value of~$r\geq 1$.  Basic geometric facts \cite{coxeter} are used throughout.

\begin{figure}[ht]
  \centering
  \includegraphics[width=0.85\linewidth]{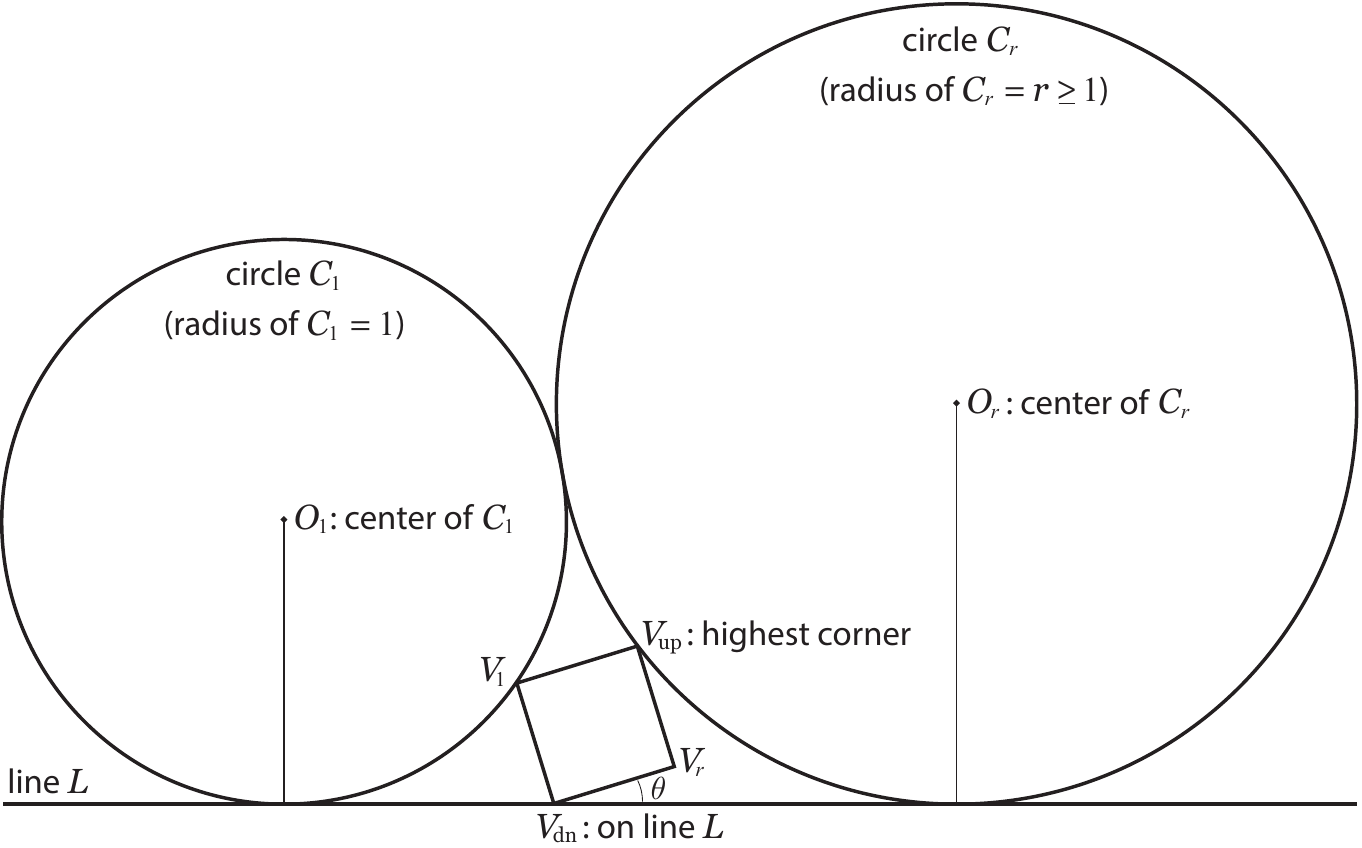}
  \caption{Notation for the line, two circles, and square.}
  \label{notation}
\end{figure}

\begin{lem}
For inscribed squares (for fixed $r\geq 1$), let $\aang\in [0,\pi/2)$ be as in Figure~\ref{notation}.
\begin{itemize}
\item[]
\begin{itemize}
\item[(i)]  For every $\aang\in [0,\pi/2)$, there is a unique inscribed square.
\item[(ii)]  There exists a minimum side length over the set of inscribed squares.  (Thus, Morikawa's problem is well-defined.)
\end{itemize}
\end{itemize}
Note:  Throughout the rest of the article, the side length of the inscribed square at angle $\aang\in [0,\pi/2)$ will be denoted~$s(\aang)$.
\end{lem}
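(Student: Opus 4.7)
For part (i), the plan is to place $\lli$ on the $x$-axis with $\ooa=(0,1)$ and $\oob=(2\sqrt{r},r)$, using the classical formula that two externally tangent circles of radii $1$ and $r$ on a common tangent line have their tangent points $2\sqrt{r}$ apart. For fixed $\aang\in[0,\pi/2)$, an inscribed square of the figure's type is determined by the abscissa of $\vvdn$ together with the side length $\sside$, subject to the polynomial constraints $\vva\in\cca$ and $\vvb\in\ccb$. I will reduce this system of two equations in two unknowns to a single scalar equation by parametrizing the one-parameter family of candidate squares satisfying $\vvdn\in\lli$ and $\vva\in\cca$ via the angular position $\alpha$ of $\vva$ on the half of $\cca$ facing $\ccb$; a direct calculation yields $\sside=(1-\cos\alpha)/\cos\aang$, so $\sside$ is strictly increasing in $\alpha$, and the remaining condition $\vvb\in\ccb$ becomes $D(\alpha)=0$, where $D(\alpha):=|\vvb-\oob|^2-r^2$.

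Existence of an inscribed square will then follow from the intermediate value theorem: at $\alpha=0$ the candidate square degenerates to the tangent point of $\cca$ with $\lli$, so $D(0)=4r>0$, whereas within the range of $\alpha$ corresponding to geometrically admissible configurations (with $\vvb$ on the half of $\ccb$ facing $\cca$), one checks that $D$ becomes nonpositive. Uniqueness will follow from a monotonicity analysis of $D$ on this admissible subinterval, reflecting the fact that the trajectory traced by $\vvb$ crosses the relevant half of $\ccb$ transversely.

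For part (ii), with $\sside(\aang)$ well-defined by (i), the implicit function theorem applied to the polynomial constraints shows $\sside:[0,\pi/2)\to\R$ is continuous. To obtain a minimum I will verify that $\sside(\aang)$ remains bounded away from $\inf_{\aang}\sside(\aang)$ in a neighborhood of $\pi/2$, so the infimum is attained on some compact subinterval $[0,\aang_0]\subset[0,\pi/2)$, on which continuity furnishes a minimizer.

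The main obstacle will be the monotonicity step in the uniqueness portion of (i). While existence and the general qualitative picture follow readily from the intermediate value theorem, verifying that $D(\alpha)$ has a single zero on the admissible subinterval requires careful calculus, possibly after splitting into cases according to the relative sizes of $\aang$ and $r$, since the curve traced by $\vvb$ as $\alpha$ varies is not obviously convex and must be shown to meet $\ccb$ exactly once on its left half.
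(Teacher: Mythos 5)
Your parametrization of candidate squares by the angular position $\alpha$ of $\vva$ on $\cca$, with the constraint $\vvb\in\ccb$, implicitly assumes that the inscribed square touches each circle at a vertex. But as the paper's Figure~\ref{configurations} makes explicit, for some values of $\aang$ the inscribed square touches $\cca$ via a tangent \emph{side} rather than via $\vva$, and similarly for $\ccb$. In those regimes the relation $\sside=(1-\cos\alpha)/\cos\aang$ and the equation $D(\alpha)=0$ do not describe the inscribed square at all, so your reduction of part (i) to ``$D$ has a unique zero'' does not hold uniformly in $\aang$. The paper sidesteps this by parametrizing differently: fix $\aang$ and a trial side length $\sside$, slide the rigid square along $\lli$ until it just touches $\cca$ (this makes sense whether the contact is a vertex or a tangent side), and ask whether the slid square fails to reach, exactly reaches, or overlaps $\ccb$. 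Both ``just touching $\cca$'' and ``reaching $\ccb$'' are monotone and continuous in $\sside$ regardless of the type of contact, giving existence and uniqueness in one stroke without case analysis. By contrast, the uniqueness/monotonicity step that you flag as the main obstacle is genuinely harder under your $\alpha$-parametrization (the trajectory of $\vvb$ is not transparently transversal to $\ccb$), and you have also left unverified both that $D$ changes sign on the admissible range and that only vertex contact occurs.

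For part (ii), the assertion that $\sside(\aang)$ is bounded away from $\inf_\aang \sside(\aang)$ near $\pi/2$ is not self-evident and is not proved; a priori the infimum could be approached as $\aang\to(\pi/2)^-$. The paper's argument is cleaner: by the four-fold symmetry of the square, the inscribed square at angle $\aang$ close to $\pi/2$ is the inscribed square at angle close to $0$ with sides relabeled, so $\lim_{\aang\to(\pi/2)^-}\sside(\aang)=\sside(0)$. Thus $\sside$ extends continuously to the compact interval $[0,\pi/2]$ with matching endpoint values, and the extreme value theorem furnishes the minimum. You should either adopt this periodicity observation or separately handle the possibility that $\inf\sside$ is $\sside(0)$ itself, in which case your ``bounded away'' claim fails even though the minimum is still attained.
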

\begin{proof}
To prove (i), fix $\aang\in [0,\pi/2)$.  We find the inscribed square at angle $\aang$ as follows, stated somewhat informally to avoid excessive technicalities.

Consider all squares, inscribed or not, that make angle $\aang$ with $\lli$ as in Figure~\ref{notation}.  Let $s$ be a side length under consideration for being that of an inscribed square.  Imagine sliding such a square --- with angle $\aang$ and side length~$s$ --- along $\lli$ until the square is to the right of $\cca$ but is just touching~$\cca$.  If $s$ is too small, then the square will not reach $\ccb$.  If $s$ is too large, then the square will overlap~$\ccb$.  By a continuity and monotonicity argument, there is a unique $s$ such that the $\cca$-touching square will exactly reach~$\ccb$.

The existence and uniqueness of an inscribed square follows from that of $s$ above, along with the fact that the inscribed square clearly cannot be moved left or right and still be inscribed.

Subsequently, (ii) follows from the facts that $s(\aang)$ is continuous on $[0,\pi/2)$ and $\lim_{\aang\to\pi/2^-} s(\aang) = s(0)$.
\end{proof}

Generally in this article, ``the square'' will mean the inscribed square as given by the context, unless stated otherwise.  Figure~\ref{configurations} shows the types of intersections between the square and circles as $\aang$ increases from $0$ to~$\pi/2$, i.e.,\ as the square rotates (and changes size as necessary).  Each combination of such intersections, as shown in Figure~\ref{configurations}, will henceforth be referred to as a \emph{configuration}.
The configurations as illustrated in Figure~\ref{configurations} will be denoted \con1, \con2, \con3, etc.

\begin{figure}[ht]
  \centering
  \includegraphics[width=0.9\linewidth]{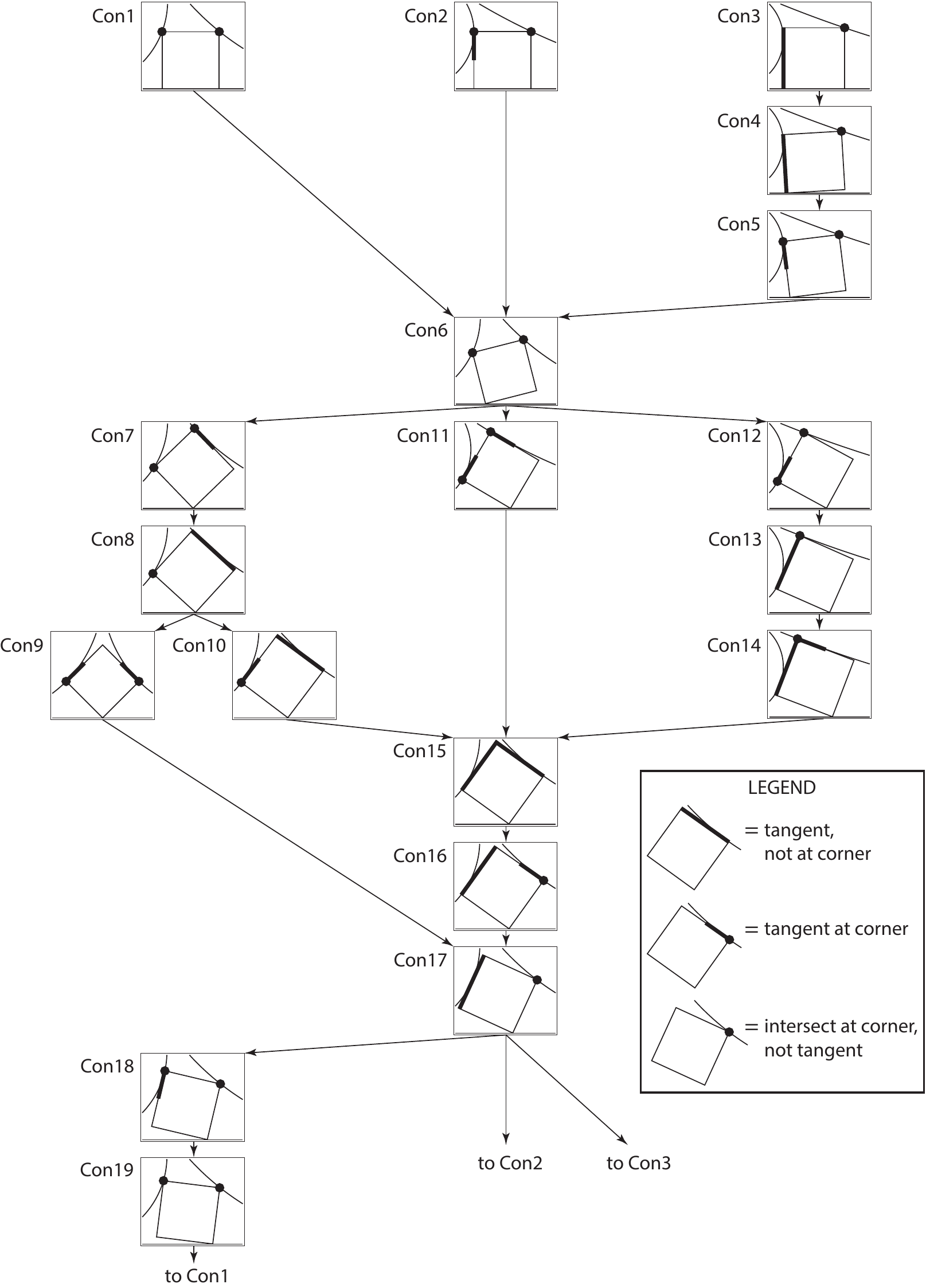}
  \caption{Possible configurations, in terms of the square's types of intersections with the line and circles. In Section \ref{orientation_section} we prove that a minimal square can occur only in \con6, or in \con19 with $r=1$, by eliminating all of the others: \con1,2,3 (Lemma~\ref{lem123}), \con4,5 (Lemma~\ref{lem456}), \con7,10,11 (Lemma~\ref{lem71011}), \con12,13,14 (Lemma~\ref{lem121314}), \con15 (Lemma~\ref{lem15}), \con16,18 (Lemma~\ref{lem1618}), \con19 (Lemma~\ref{lem19}), \con8,9,17 (Lemma~\ref{lem8917}).
  }
  \label{configurations}
\end{figure}

\begin{lem}
Figure~\ref{configurations} shows all possible steps through the configurations as $\aang$ increases from $0$ to~$\pi/2$.
\end{lem}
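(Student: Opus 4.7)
The plan is to exploit continuity of the inscribed square in $\aang$. By the previous lemma, the square itself, its side length $\sside(\aang)$, its vertex positions, and its points of contact with $\cca$, $\ccb$, and $\lli$ all vary continuously with $\aang$. Hence the combinatorial configuration --- the list of which vertices and sides of the square touch or cross each of $\cca$, $\ccb$, and $\lli$ --- is locally constant except at isolated values of $\aang$ at which an ``elementary event'' occurs: either a vertex of the square moves onto or off of $\cca$, $\ccb$, or $\lli$, or a side of the square passes from secant to tangent to one of the circles.

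First I would pin down the endpoints of the sequence. At $\aang=0$ the square is axis-aligned with its base on $\lli$, left side touching $\cca$ and right side touching $\ccb$, which identifies the initial configuration on the left of Figure~\ref{configurations}. Using the identity $\lim_{\aang\to\pi/2^-}\sside(\aang)=\sside(0)$ established in the proof of the previous lemma, the rotated square in the limit has the same side length as the starting one, matching the final configuration drawn on the right.

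Next I would enumerate, for each configuration $\con{j}$ shown in Figure~\ref{configurations}, all elementary events that can occur on its boundary, and determine which new configuration each such event produces. Because the square is rigid and its geometry depends continuously on $\aang$, the configuration immediately after an event is the unique one obtained by perturbing the pre-event picture infinitesimally in the direction of increasing $\aang$. A direct bookkeeping of these local adjacencies, together with the endpoint data above, then forces the observed transition chain to be exactly the path $\con1\to\con2\to\cdots\to\con{19}$ drawn in the figure.

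The main difficulty will be the local analysis at each transition. One must rule out configurations that are a priori combinatorially admissible but geometrically impossible for $r\geq 1$, and must also verify that no two elementary events collide at a single value of~$\aang$, which would allow the square to ``skip'' a configuration. Both points reduce to computing, from the explicit parametrization and monotonicity properties recorded in Section~\ref{parametrization_section}, the $\aang$-values at which each individual tangency or vertex incidence occurs, and then checking that these values are distinct and occur in the claimed order.
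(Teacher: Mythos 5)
Your general framework---locally constant configuration punctuated by isolated ``elementary events''---is reasonable, but the proposal has two genuine problems, one factual and one a real gap.

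First, the claim that the bookkeeping ``forces the observed transition chain to be exactly the path $\con1\to\con2\to\cdots\to\con19$'' misreads the statement. Figure~\ref{configurations} is not a single linear chain but a branching diagram whose branches are selected by the value of~$r$: as the paper's proof notes, from \con6 a small $r$ leads to \con7, a large $r$ to \con12, and one special intermediate $r$ to \con11 (similarly $r=1$ is required to realize \con9). So you cannot hope to show ``these events occur in the claimed order'' for a fixed $r$; you must show that the \emph{set of possible paths}, parametrized by $r\ge 1$, is exactly what the figure depicts, with the branch choices controlled by $r$. Relatedly, your side remark that ``no two elementary events collide at a single value of~$\aang$'' is false in the relevant sense: the collision of two events is precisely what the special configurations like \con9 and \con11 record, and they appear in the figure for exactly that reason.

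Second, and more importantly, the proposal defers the entire content of the lemma to ``a direct bookkeeping of local adjacencies,'' ``computing the $\aang$-values at which each tangency occurs,'' and ``checking they are distinct.'' Section~\ref{parametrization_section} does not supply an explicit closed-form parametrization that makes such a computation routine, and the lemma is not proved by a uniform enumeration. The paper's proof instead observes that almost all transitions are geometrically evident, and isolates the two genuinely nonobvious ones: (a) from \con8, whether the upper-right side could lose tangency with $\ccb$ before the upper-left side becomes tangent to $\cca$, which is ruled out via the auxiliary circle of diameter $\ooa\oob$ tangent to $L$ (Figure~\ref{tangentcircle}) and the inscribed-angle theorem; and (b) from \con15, whether $\vvup$ could reach $\cca$ before the lower-right side touches $\ccb$, ruled out by a position argument relative to the same tangent point $Q$. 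Your proposal never identifies these as the problematic transitions and offers no substitute argument for them; until it does, it is a plan for a proof rather than a proof. If you want to salvage your approach, make the $r$-dependence of the transition graph explicit, and then confront the two ambiguous transitions head-on with concrete geometry rather than appealing to a parametrization the paper does not provide.
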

\begin{proof}
The steps through the configurations obviously depend upon~$r$.  For example, from \con6, the value of $r$ determines which circle first becomes tangent to a side of the square as $\aang$ increases.  Small $r$ leads to \con7, large $r$ leads to \con12, and a certain intermediate value of $r$ leads to \con11.  Also note that only $r=1$ gives \con9.

The fact that these are the steps through the configurations is generally clear, with two exceptions:  from \con8 to \con9 and \con10, and from \con15 to \con16.

For \con8 to \con9 and \con10, the question is whether the upper right side of the square could instead rotate past tangency with $\ccb$ before the upper left side of the square becomes tangent to~$\cca$.  This can happen only if the lower left side of the square has steeper slope than the line through $\vvdn$ and~$\ooa$ --- i.e.,\ the lower left side ``points above'' $\ooa$ --- and the lower right side points above~$\oob$.  However, this is impossible because the circle of radius $(r+1)/2$ through $\ooa$ and~$\oob$, as shown in Figure~\ref{tangentcircle}, is tangent to~$\lli$.  Every angle inscribed in a semicircle is a right angle, so since $\lli$ is below the new circle except at the point of tangency, $Q$, the lower sides of the square cannot both point above their respective circles' centers.  At best, the lower sides can point exactly at the centers, but only if $r=1$ and $\vvdn=Q$.

\begin{figure}[ht]
  \centering
  \includegraphics[width=\linewidth]{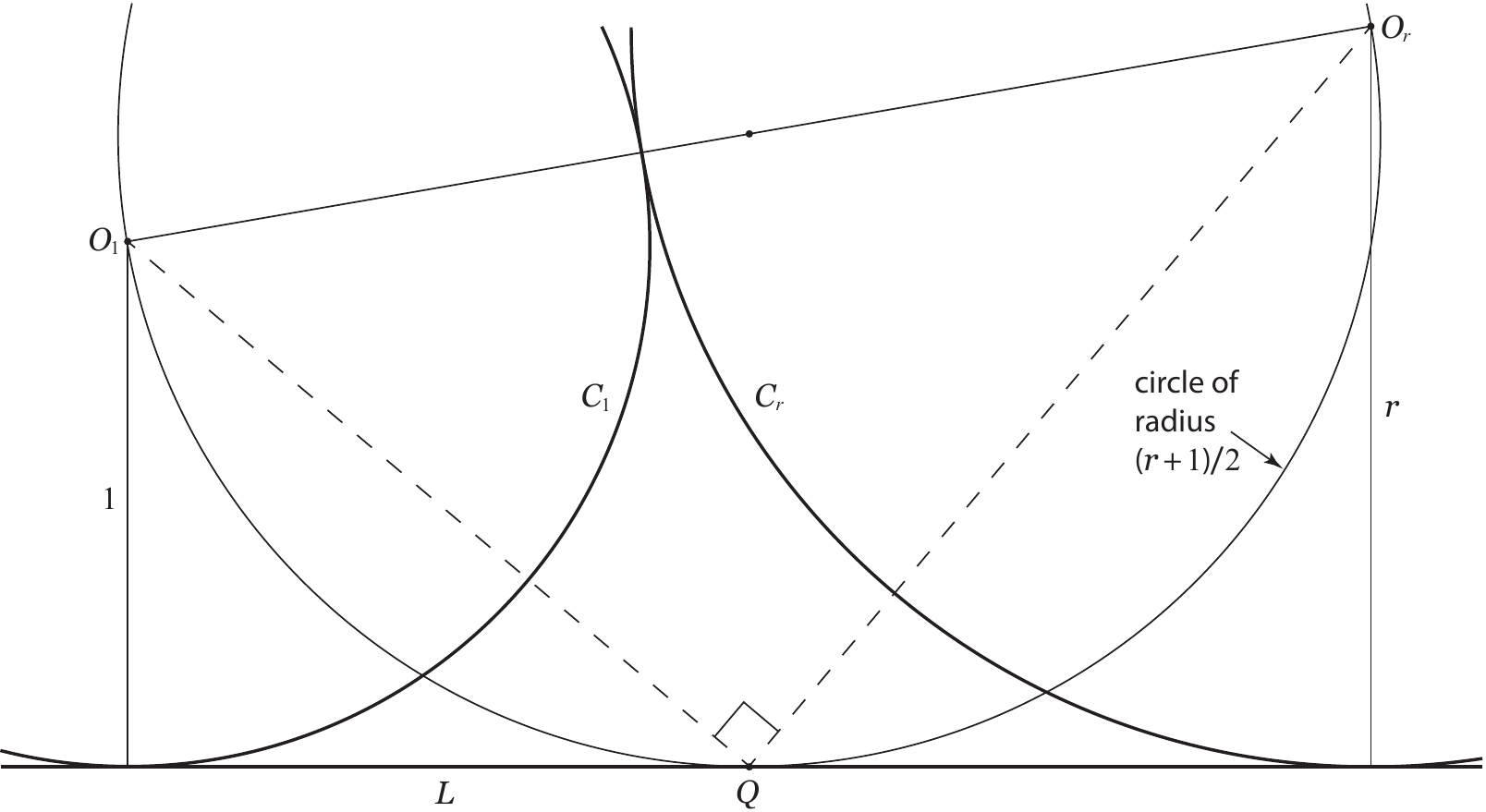}
  \caption{Circle with diameter $\ooa$-to-$\oob$ is tangent to~$L$.}
  \label{tangentcircle}
\end{figure}

For \con15 to \con16, the question is whether $\vvup$ can instead touch $\cca$ before the lower right side of the square touches~$\ccb$.  This can happen only if the square with upper right side tangent at $\vvb$ to $\ccb$ has $\vvup$ touching~$\cca$.  However, this is impossible.  Any square with upper right side tangent at $\vvb$ to $\ccb$ must have $\vvdn$ at or to the left of $Q$ in Figure~\ref{tangentcircle} in order for the square to reach~$\cca$, but then the square is angled such that $\vvup$ cannot touch~$\cca$.
\end{proof}

\section{Configurations for minimal squares.}\label{orientation_section}

In this section we prove that a minimal square --- i.e.,\ an inscribed square with side length that is minimal over \emph{all} orientations --- exists only in \con6, and has $\vva$ lower than~$\ooa$.  An exception occurs if $r=1$, where \con19 is the reflection of \con6 and thus also has a minimal square.  The proof consists of a sequence of lemmas showing that  a minimal square cannot be in any other configuration.  The final result is given by Proposition~\ref{prop6only}.

Additional notation is used throughout this section, for the lines perpendicular to each of $\cca$, $\ccb$, and $\lli$ at the points of contact with the square under consideration.  As illustrated in Figure~\ref{tnotation}, these (dashed) lines are denoted $\tta$, $\ttb$, and~$\ttl$, respectively.  As before, unless stated otherwise we assume an arbitrary but fixed value of $r\geq 1$.

\begin{lem}\label{triangle}
For a given inscribed square, if $\tta$ intersects $\ttl$ above~$\ttb$ (respectively, below~$\ttb$), then $s$ is a strictly decreasing (respectively, increasing) function of $\aang$ at that square's angle.
\end{lem}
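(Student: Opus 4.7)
The plan is to interpret $ds/d\theta$ via the infinitesimal kinematics of the inscribed square and connect it to the geometry of the three normal lines $\tta$, $\ttb$, $\ttl$.

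First I parametrize the one-parameter family of inscribed squares by $\theta$ so that the center $C(\theta)$, side length $s(\theta)$, and the three contact points (one on each of $\cca$, $\ccb$, $\lli$) all vary smoothly. As $\theta$ changes by $d\theta$, the square undergoes an infinitesimal similarity motion: translation, rotation by $d\theta$, and scaling by $ds/s$. For each contact point $P$, viewed as a point carried by the square, the induced velocity is
\[
\dot{P} = \dot{C} + R_{\pi/2}(P - C) + \frac{\dot{s}}{s}(P - C),
\]
and the tangency condition forces $\dot{P}$ to be perpendicular to the corresponding normal line ($\tta$, $\ttb$, or $\ttl$). The three scalar constraints thus obtained form a linear system in the three unknowns $(\dot{C}_x, \dot{C}_y, \dot{s})$.

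I then solve for $\dot{s}$ by Cramer's rule. The key claim is that the numerator $3 \times 3$ determinant equals---up to a nonzero scalar---the classical concurrency determinant of $\tta$, $\ttb$, $\ttl$. Hence $\dot{s} = 0$ exactly when the three normals meet at a common point, and more generally the numerator is proportional to the signed perpendicular distance from $\tta \cap \ttl$ to the line $\ttb$; this is exactly what distinguishes the ``above'' and ``below'' cases of the lemma.

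Finally I argue that the denominator determinant has a fixed sign. This reduces to a uniform statement about the sign of $(P - C)\cdot \hat{n}$ at each contact (where $\hat{n}$ is the unit normal to the corresponding curve), which is forced by the fact that the square sits on a definite side of the tangent line to each of $\cca$, $\ccb$, $\lli$ at the contact point---an elementary observation that holds whether the contact is at a vertex or along a side of the square. Combining the two sign analyses then yields the claimed direction of monotonicity. The main obstacle will be the careful sign bookkeeping: one must orient the unit normals compatibly, verify that the denominator's sign is configuration-independent across all cases in Figure~\ref{configurations}, and check that ``above $\ttb$'' corresponds to the correct sign of the concurrency determinant.
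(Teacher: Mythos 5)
Your proposal takes a genuinely different route from the paper. The paper rotates a \emph{fixed-size} copy of the square counterclockwise about a point chosen inside the triangle bounded by $\tta$, $\ttb$, $\ttl$, and observes that it then begins to overlap $\cca$, $\ccb$, and $\lli$ simultaneously; hence the inscribed square at the new angle must be strictly smaller. The choice of pivot \emph{is} the sign analysis: because the pivot lies on the correct side of each of the three normal lines, the normal velocity of each contact point automatically has the right sign, with no linear algebra needed. Your proposal instead writes out the kinematic constraint system and solves by Cramer's rule, and the central identification is in fact correct: if one works in coordinates centered at $C$, the numerator $3\times 3$ minor is (up to sign) exactly the concurrency determinant of $\tta,\ttb,\ttl$, while the denominator minor is the concurrency determinant of the three \emph{tangent} lines at the contact points, which is nonzero since those lines bound a triangle containing the square --- indeed its nonvanishing is equivalent to the local uniqueness of the inscribed square established in the paper's Lemma~2.1. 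The systematic payoff of your route is a single formula for $ds/d\theta$ valid across all configurations with no case distinctions. But the bookkeeping you flag as ``the main obstacle'' is a real gap and is somewhat subtler than you suggest: the denominator's sign does not reduce only to the signs of $(P_i-C)\cdot\hat n_i$; it also depends on the cyclic arrangement of the three normal directions $\hat n_i$ around the square, and you would additionally need a $C^1$ smoothness statement for the inscribed-square family on the interior of each configuration before differentiating the constraints is legitimate. Both issues are surmountable, so the approach can be completed, but the paper's device of placing the rotation pivot inside the normal-line triangle sidesteps the entire problem in a few lines.
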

\begin{proof}
If $\tta$ intersects $\ttl$ above $\ttb$, then the lines form a triangle to the right of $\ttl$, such as in Figure~\ref{tnotation}.

\begin{figure}[ht]
  \centering
  \includegraphics[width=0.85\linewidth]{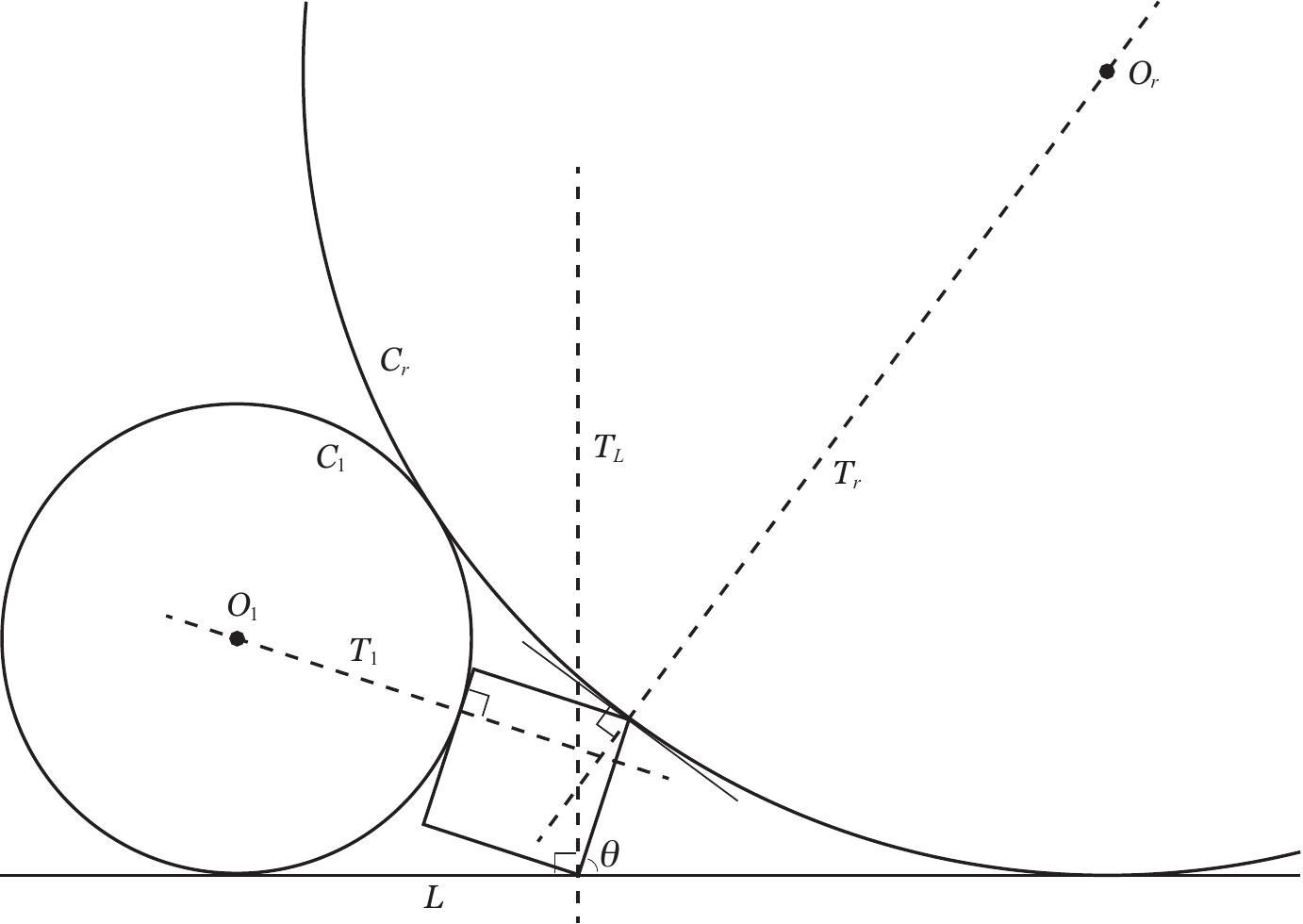}
  \caption{Notation: Lines $\tta$, $\ttb$, and $\ttl$ through points of intersection.}
  \label{tnotation}
\end{figure}

Consider fixing the size of the square, and rotating it counterclockwise about a point inside the triangle.  As the square begins to rotate, it starts to overlap with each of $\cca$, $\ccb$, and~$\lli$.  Formally, the rate of change of the following are positive:  (1)~the radius of $\cca$ minus the distance from $\ooa$ to the square, (2)~the radius of $\ccb$ minus the distance from $\oob$ to the square, and (3)~the distance below $\lli$ to the lowest point on the square.  This means that the difference between the side lengths of an inscribed square and a fixed-size square is a strictly decreasing function of~$\aang$; thus $s$ is a strictly decreasing function of $\aang$.  Similarly, if $\tta$ intersects $\ttl$ below $\ttb$, then $s$ is a strictly increasing function of $\aang$.
\end{proof}

\begin{lem}\label{lem123}
A minimal square cannot be in \con1, \con2, or \con3.
\end{lem}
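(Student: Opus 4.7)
The plan is to invoke Lemma~\ref{triangle}: if I can show that $\tta\cap\ttl$ lies above $\ttb$ throughout the angular range of each of \con1, \con2, and \con3, then $\sside(\aang)$ is strictly decreasing on that range, and so no minimum can occur in these configurations.

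First, I would read the three configurations off Figure~\ref{configurations}. They all arise for small $\aang \ge 0$ and have a whole side of the square on (or, for $\aang>0$, only the vertex $\vvdn$ touching) $\lli$; they differ only in whether each of $\cca$ and $\ccb$ contacts the square by a side tangency (so the corresponding $\tta$ or $\ttb$ passes through the circle's center, perpendicular to the square's side) or by a vertex on the circle (so the line passes through that vertex and the center). In every case $\ttl$ is vertical. Then, for each of the three cases, I would verify that $\tta\cap\ttl$ lies above $\ttb$, using that $\ooa$ sits at height $1$ and $\oob$ at height $r\ge 1$ above $\lli$, that the horizontal distance between $\ooa$ and $\oob$ equals $2\sqrt{r}$, and that the contacts with the circles are near the top of the square while the contact with $\lli$ is at the bottom. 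Applying Lemma~\ref{triangle} then gives $\sside'(\aang)<0$ on each range, and by continuity $\sside$ is strictly decreasing on the union of the ranges realized by \con1, \con2, \con3; hence the minimum must occur at a strictly larger angle.

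The main obstacle I expect is the case-by-case geometric verification that $\tta\cap\ttl$ lies above $\ttb$, since the three configurations involve genuinely distinct tangency patterns and therefore different lines $\tta$ and $\ttb$. Care will also be needed at $\aang=0$, where the full-side contact with $\lli$ makes $\ttl$ a priori ambiguous and a limit argument may be required, and at the transition angles between consecutive configurations, where one tangency type switches to another and one must check that the ``above'' condition persists across the switch.
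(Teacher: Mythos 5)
Your plan uses the right key tool (Lemma~\ref{triangle}) and you correctly anticipate the difficulty at $\aang=0$, but your picture of the configurations is off in a way that would send you down a needlessly complicated path. In Figure~\ref{configurations}, \con1, \con2, and \con3 are precisely the three possible $\aang=0$ configurations (they differ only in how the square's vertical side meets $\cca$ and how $\vvup$ or the top side meets $\ccb$, depending on $r$). They do not each occupy an angular range, and there are no ``transition angles between consecutive configurations'' among them; for a fixed $r$, exactly one of the three occurs, at the single angle $\aang=0$. So the case-by-case verification ``throughout the angular range of each'' that you flag as the main obstacle is vacuous, and the worry about the ``above'' condition ``persisting across a switch'' does not arise.

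This also means you cannot apply Lemma~\ref{triangle} directly at the configuration in question: as you correctly observe, $\ttl$ is ill-defined when the whole bottom side lies on $\lli$. The paper's proof turns that observation into the whole argument. One simply notes that for $\aang$ in some small interval $(0,\epsilon)$ --- where the square has already rotated off $\lli$ and $\ttl$ is well-defined, and where the configuration is one of the later ones --- the intersection $\tta\cap\ttl$ sits above $\ttb$; Lemma~\ref{triangle} then gives that $\sside$ is strictly decreasing on $(0,\epsilon)$, and continuity of $\sside$ at $0$ shows $\sside(0)$ is not minimal. So the ``limit argument'' you mention as a possible fallback is in fact the entire proof, and the per-configuration geometric bookkeeping you were bracing for is not needed.
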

\begin{proof}
Each of these configurations corresponds to $\aang=0$.  For small enough $\epsilon>0$, it is easy to see that for all $\aang\in (0,\epsilon)$, $\tta$ intersects $\ttl$ above~$\ttb$.  Therefore by Lemma~\ref{triangle}, $\sside(\aang)$ is strictly decreasing for $\aang\in (0,\epsilon)$ and thus by continuity $\sside(0)$ is not minimal.
\end{proof}

\begin{lem}\label{lem456}
A minimal square cannot be in \con4 or \con5, and a minimal square cannot be in \con6 with $\vva$ as high or higher than~$\ooa$.
\end{lem}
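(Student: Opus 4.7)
The plan is to apply Lemma~\ref{triangle} in each of the three cases, showing that the perpendicular $\tta$ meets $\ttl$ strictly above $\ttb$. By the lemma this forces $\sside(\aang)$ to be strictly decreasing at the angle in question, so a slightly larger $\aang$ yields an inscribed square of smaller side length and rules out a minimum there.

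Set up coordinates with $\lli$ as the $x$-axis and $\vvdn=(x_0,0)$, so that $\vva=\vvdn+\sside(-\sin\aang,\cos\aang)$, $\vvb=\vvdn+\sside(\cos\aang,\sin\aang)$, and $\ttl$ is the vertical line $x=x_0$. For \con4 and \con5, the defining feature is that the square meets $\cca$ along an entire side rather than only at the vertex $\vva$. Then $\tta$ is the perpendicular from $\ooa$ to that tangent side, which is parallel to the lower-right side of the square and has slope $\tan\aang$; intersecting with $\ttl$ gives an explicit formula for $\tta\cap\ttl$. An analogous analysis of $\ttb$ (whose contact type depends on the sub-configuration) produces $\ttb\cap\ttl$, and the comparison $\tta\cap\ttl>\ttb\cap\ttl$ follows by direct trigonometric calculation. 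For \con6 with $\vva$ at or above $\ooa$, the hypothesis reads $\sside\cos\aang\ge 1$. The perpendicular $\tta$ is the chord line through $\ooa$ and $\vva$; since $\vva$ sits upper-right of $\ooa$, the slope of $\tta$ is nonnegative, and extending rightward to $\ttl$ places $\tta\cap\ttl$ at height at least $\sside\cos\aang\ge 1$. What remains is to bound the height of $\ttb\cap\ttl$ strictly below this.

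The main obstacle will be this last step in the \con6 subcase. Unlike \con4 and \con5, there is no tangent-side condition giving a simple formula for $\ttb$; instead, $\sside$, $x_0$, and $\aang$ are coupled through the simultaneous conditions $|\vva-\ooa|=1$ and $|\vvb-\oob|=r$, together with the horizontal separation $2\sqrt{r}$ between $\ooa$ and $\oob$ forced by external tangency of the circles. I expect to bound the height of $\ttb\cap\ttl$ by an explicit coordinate computation, splitting on whether $\vvb$ lies above or below $\oob$, and to exploit the hypothesis $\sside\cos\aang\ge 1$ together with the position constraint on $\vvb$ to complete the comparison.
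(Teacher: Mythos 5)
Your plan correctly identifies the tool (Lemma~\ref{triangle}) and the target inequality (the height of $\tta\cap\ttl$ exceeds that of $\ttb\cap\ttl$), but that inequality is precisely the nontrivial content of the lemma, and the proposal leaves it unestablished in every subcase. For \con4 and \con5 you assert the comparison ``follows by direct trigonometric calculation'' without producing it; for \con6 with $\vva$ at or above $\ooa$ you explicitly name the bound on $\ttb\cap\ttl$ as the ``main obstacle'' and only describe an intention to obtain it by coordinates. The difficulty you flag is real: $\sside$, $x_0$, and $\aang$ are coupled through the nonlinear constraints $|\vva-\ooa|=1$, $|\vvup-\oob|=r$, and the horizontal offset $2\sqrt{r}$, so there is no simple closed form for $\ttb$, and $\oob$ sits at height $r$, which can be arbitrarily large --- so it is not at all obvious that $\ttb\cap\ttl$ stays below $\sside\cos\aang$, and nothing in the proposal shows it does. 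As written, this is a plan with the central step missing, not a proof.

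The paper avoids the coordinate morass entirely with an auxiliary construction that handles all three subcases at once. Draw the line $L'$ tangent to $\ccb$ at $\vvup$ and the angle bisector $L''$ of $L$ and $L'$. Since $L'$ must cross $\cca$ to leave the region bounded by $\cca$, $\ccb$, and $\lli$, the bisector $L''$ passes below $\ooa$. One then shows that $\vva$ lying above $L''$ forces $\vvup$ to be nearer the vertex $L\cap L'$ than $\vvdn$ is, hence $\ttb$ meets $L''$ to the right of $\ttl\cap L''$, so $\ttb\cap\ttl$ lies \emph{below} $\ttl\cap L''$; meanwhile $\tta$ has nonnegative slope (its radial direction points up and to the right whether $\cca$ is touched at a vertex or tangent along a side), so $\tta\cap\ttl$ lies \emph{above} $\ttl\cap L''$. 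Thus the comparison point is $\ttl\cap L''$, not the height $1$ of $\ooa$, and the argument is purely synthetic. If you want to salvage a coordinate approach, you would at minimum need to exhibit the explicit inequality you claim for \con4/\con5 and find a workable substitute for the missing $\ttb$ bound in \con6; the bisector $L''$ is the natural object to import for that purpose.
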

\begin{proof}
Figure~\ref{for456} illustrates \con4, and the same reasoning applies to the other two configurations.  First, note that $\vva$ is higher than $\ooa$ in \con4 and \con5 because some part of the left side of the square is tangent to~$\cca$, and that left side has negative slope.  Consider the line $L'$ tangent to $\ccb$ at $\vvup$, and the line $L''$ bisecting the angle between $L$ and~$L'$.  Line $L'$ must intersect $\cca$ (in order to ``escape'' the region between $\cca$, $\ccb$, and~$\lli$), so $L''$ is below~$\ooa$.

\begin{figure}[ht]
  \centering
  \includegraphics[width=0.9\linewidth]{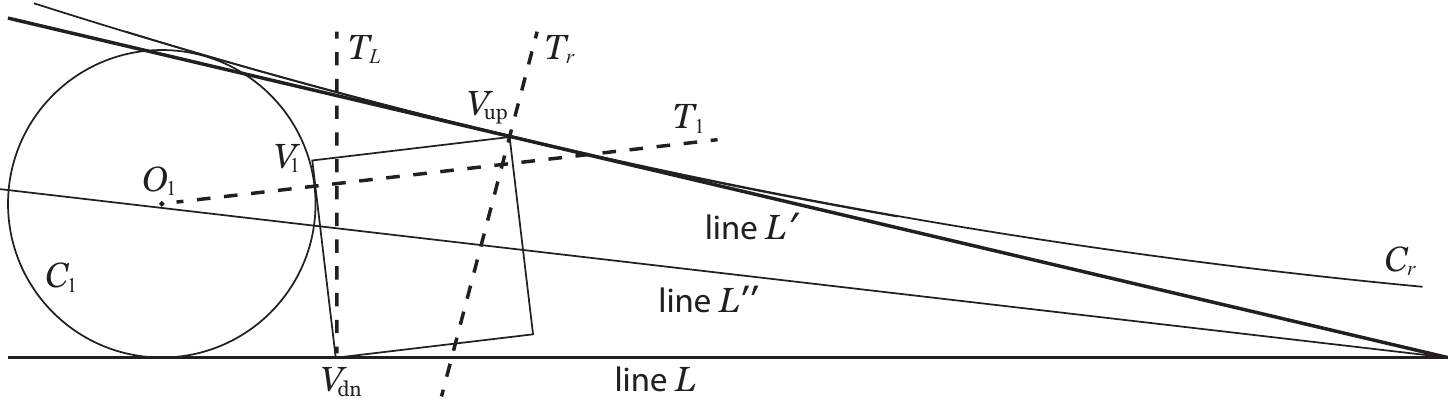}
  \caption{Illustration for Lemma~\ref{lem456}.  The line $L''$ bisecting the angle between $L$ and~$L'$ is below~$\ooa$, so $\vva$ is higher than~$L''$, leading to an application of Lemma~\ref{triangle}.}
  \label{for456}
\end{figure}

Because $\vva$ is higher than $\ooa$ and therefore higher than $L''$, the square is tilted in such a way that $\vvup$ is closer than $\vvdn$ to the intersection of $L'$ and~$L$.  Therefore, $\ttb$ intersects $L''$ to the right of where $\ttl$ intersects~$L''$; thus the intersection of $\ttl$ and~$\ttb$ is below the intersection of $\ttl$ and~$L''$.  Meanwhile, $\tta$~has nonnegative slope and therefore intersects $\ttl$ above the intersection of $\ttl$ and $L''$, thus above the intersection of $\ttl$ and~$\ttb$. Hence by Lemma~\ref{triangle}, $\sside(\aang)$~is a strictly decreasing function and is therefore not minimal for these values of~$\aang$.
\end{proof}

\begin{lem}\label{lem71011}
A minimal square cannot be in \con7, \con10, or \con11.
\end{lem}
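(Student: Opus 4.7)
The plan is to mimic the strategy of Lemma~\ref{lem456}: show that in each of these configurations the three perpendicular lines $\tta, \ttb, \ttl$ are arranged so that $\tta\cap\ttl$ lies strictly below $\ttb\cap\ttl$, and then invoke Lemma~\ref{triangle} to conclude $\sside(\aang)$ is strictly \emph{increasing} at such an angle. Combined with continuity, this rules out a minimum at any configuration of type \con7, \con10, or \con11.

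First I would pin down the qualitative geometry common to all three configurations. Each of \con7, \con10, \con11 arises just past \con6 as $\aang$ increases, so the square is tilted far enough that some side (rather than just a vertex) becomes tangent to one of the circles. In every case the tangency forces the corresponding perpendicular $\tta$ or $\ttb$ to pass through the circle's center with a specific sign of slope. For \con7 the lower right side is tangent to $\ccb$, so $\ttb$ is perpendicular to that side and meets $\ttl$ at a point \emph{below} the top of the square; meanwhile $\tta$ still passes from $\ooa$ to a vertex on $\cca$ on the upper portion, so $\tta\cap\ttl$ sits relatively high. I expect an analogous picture in \con10 (tangency of an upper side) and \con11 (tangencies at both circles simultaneously).

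Next I would make the "below" claim precise. The cleanest route, I think, is to compare against an auxiliary line $L''$ bisecting the angle between $\lli$ and a tangent line to one of the circles passing through a nearby vertex, exactly as in Lemma~\ref{lem456}. One checks by elementary angle‑chasing that $\tta$ (respectively $\ttb$) meets $\ttl$ on the appropriate side of $L''\cap\ttl$, which by transitivity forces $\tta\cap\ttl$ to lie below $\ttb\cap\ttl$. Once that inequality is in hand, Lemma~\ref{triangle} finishes each case: $\sside(\aang)$ is strictly increasing at this $\aang$, so a square strictly smaller than the given one exists at a slightly smaller angle, contradicting minimality.

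The main obstacle will be the case analysis: \con11 is the "transitional" case between \con7 and \con10 (two simultaneous side‑tangencies) and its geometry is the most delicate, since $\tta\cap\ttl$ and $\ttb\cap\ttl$ can be close together. I would handle it by a limiting argument — showing that in a one‑parameter family interpolating \con7 $\to$ \con11 $\to$ \con10 the sign of the vertical separation of $\tta\cap\ttl$ and $\ttb\cap\ttl$ does not change — so that the common conclusion $\sside'(\aang)>0$ extends to the boundary configuration. The other potential snag is making the bisector argument work uniformly for all $r\ge 1$; I would need to confirm that the auxiliary tangent line to the "other" circle still has the claimed escape behavior in the regime of $r$ where \con7, \con10, or \con11 actually occurs, using the classification of transitions between configurations noted in the preceding lemma.
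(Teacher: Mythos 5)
Your plan reproduces the paper's argument: the paper also observes that in each of \con7, \con10, \con11 the intersection $\tta\cap\ttl$ lies below $\ttb\cap\ttl$, and then applies Lemma~\ref{triangle} to conclude that $\sside(\aang)$ is strictly increasing, hence not minimal. (The paper treats the geometric inequality as visually evident and states the entire proof in two sentences.)

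However, the geometric justification you sketch in your second paragraph for \con7 appears to be inverted and would not deliver the inequality you correctly identify as the target. You describe $\ttb\cap\ttl$ as sitting ``below the top of the square'' and $\tta\cap\ttl$ as ``relatively high,'' which, read literally, puts $\tta\cap\ttl$ \emph{above} $\ttb\cap\ttl$ --- the opposite ordering. The actual picture is the reverse: $\ttb$ is the radial line of $\ccb$ through the tangent point, has negative slope, and passes through $\oob$ at height $r$; since $\ttl$ lies to the left of $\oob$, the point $\ttb\cap\ttl$ has height greater than $r\geq 1$, well above the square. Meanwhile $\tta$ passes through $\ooa$ (height $1$) and the contact point $\vva$ (which is lower than $\ooa$ by Lemma~\ref{lem456}) with negative slope, and $\ttl$ lies to the \emph{right} of $\vva$, so $\tta\cap\ttl$ sits below the height of $\vva$, hence below $1$. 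So the needed inequality does hold, but not for the reasons you give. You should redo the elementary slope analysis for all three configurations rather than try to transport the bisector construction of Lemma~\ref{lem456}, which was tailored to side-tangency with $\cca$ and $\vva$ higher than $\ooa$ --- a different geometric regime --- and which the paper does not invoke here.
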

\begin{proof}
In each of these configurations, $\tta$ intersects $\ttl$ below the intersection between $\ttl$ and~$\ttb$.  Therefore by Lemma~\ref{triangle}, $\sside(\aang)$ is strictly increasing and therefore not minimal at the corresponding~$\aang$.
\end{proof}

\begin{figure}[h]
  \centering
  \includegraphics[width=0.4\linewidth]{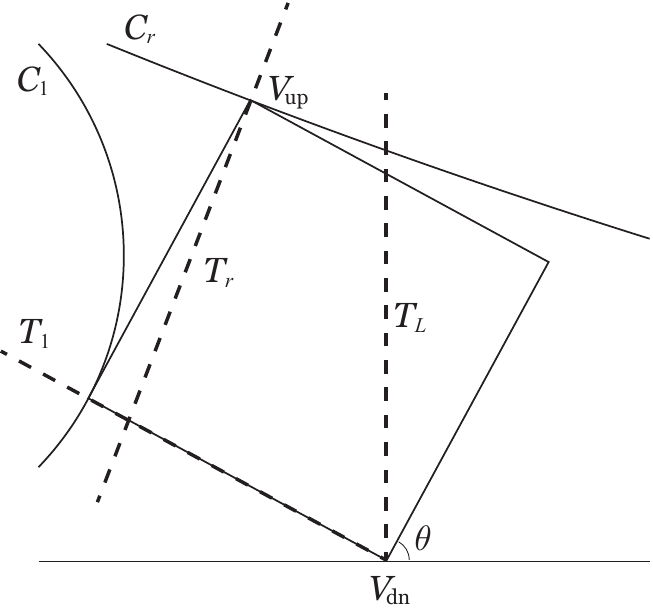}
  \caption{Illustration for Lemma~\ref{lem121314}.  In \con12, \con13, and \con14, $\aang>\pi/4$, so $\ttl$ is to the right of~$\vvup$, leading to an application of Lemma~\ref{triangle}.}
  \label{for121314}
\end{figure}

The proof that a minimal square cannot be in \con8, \con9, or \con17 is substantially more complicated than the other proofs, and is saved for the end of the section.

\begin{lem}\label{lem121314}
A minimal square cannot be in \con12, \con13, or \con14.
\end{lem}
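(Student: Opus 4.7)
The plan is to apply Lemma~\ref{triangle} in each case by showing that $\tta$ meets $\ttl$ strictly above the intersection of $\ttb$ with $\ttl$; this forces $\sside(\aang)$ to be strictly decreasing at the corresponding $\aang$, hence not minimal.

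First I would establish the unifying feature suggested by Figure~\ref{for121314}: in each of \con{}12, \con{}13, and \con{}14 the angle satisfies $\aang > \pi/4$, which forces $\ttl$ to lie strictly to the right of $\vvup$. To see this, place the $\lli$-contact (i.e., the point $\vvdn$) at the origin; the opposite vertex is then
\[
\vvup = \bigl(\sside(\cos\aang - \sin\aang),\; \sside(\cos\aang + \sin\aang)\bigr),
\]
whose $x$-coordinate is negative precisely when $\aang > \pi/4$, while $\ttl$ is the vertical line $x=0$.

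Next I would pin down the lines $\tta$ and $\ttb$. In each configuration, $\cca$ and $\ccb$ are tangent to specific sides of the square, so $\tta$ and $\ttb$ are the perpendiculars to those sides through $\ooa$ (at height $1$) and $\oob$ (at height $r$). Identifying the relevant sides from the shape of \con{}12, \con{}13, and \con{}14 yields slopes of the form $\pm\tan\aang$ or $\mp\cot\aang$. Combining these slopes with $\aang > \pi/4$ and with the horizontal separation $x_{\oob}-x_{\ooa} = 2\sqrt r$ (from the standard tangent-circles computation), a direct evaluation at $x=0$ compares the heights of $\tta\cap\ttl$ and $\ttb\cap\ttl$ and gives the required inequality.

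The main obstacle I anticipate is this last step: each of \con{}12, \con{}13, and \con{}14 has its own pattern of side-tangencies, so the slopes of $\tta$ and $\ttb$ and the horizontal placement of $\ttl$ between $\ooa$ and $\oob$ vary from case to case. The inequality must hold uniformly for all $r\ge 1$ and for the range of $\aang>\pi/4$ compatible with each configuration, so some careful bookkeeping is required, even though the geometric intuition is the same each time: $\ttl$ has drifted so far to the right of $\vvup$ that the normal $\ttb$, with its steep slope, must drop below $\tta$ by the time both lines hit $\ttl$.
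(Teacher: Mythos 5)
Your plan applies the right tool (Lemma~\ref{triangle}) and correctly identifies the geometric hinge ($\aang > \pi/4$, equivalently $\ttl$ lies strictly to the right of $\vvup$), but the direction of monotonicity you aim to prove is reversed, and the inequality you would need to verify is false. In \con12, \con13, and \con14, the line $\tta$ meets $\ttl$ \emph{below} its intersection with $\ttb$, so by Lemma~\ref{triangle}, $\sside(\aang)$ is strictly \emph{increasing} at these angles, not decreasing. This must be so: by Lemma~\ref{lem6monotonic}, as $\aang$ increases through \con6 the triangle formed by $\tta$, $\ttb$, $\ttl$ switches from the right side of $\ttl$ to the left side, and \con12--14 lie beyond that switch, on the increasing branch of $\sside$. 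Your closing intuition that ``$\ttb$, with its steep slope, must drop below $\tta$'' is therefore backwards --- $\ttb$, with positive slope, climbs from $\vvup$ (left of $\ttl$) to $\oob$ (right of $\ttl$, at height $r$), while $\tta$, with negative slope, falls from $\ooa$ toward $\ttl$, so at $\ttl$ it is $\ttb$ that sits higher. Were you to carry out the coordinate bookkeeping you sketch, you would obtain the opposite sign and have to reverse the claim; this happens to be harmless for the ultimate conclusion (either strict monotonicity rules out a minimum), but it signals a real misreading of the geometry.

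Two smaller gaps. You assert $\aang > \pi/4$ as a feature ``suggested by the figure'' without an argument, whereas the paper proves it: in these configurations the upper left side of the square is tangent to $\cca$, and (referencing \con9, which has $\aang=\pi/4$ and $r=1$) a square with $\aang \le \pi/4$ whose upper left side is tangent to $\cca$ cannot have $\vvup$ reach any $\ccb$ with $r\ge 1$. Also, the ``careful bookkeeping'' across the three sub-cases that you anticipate is not actually needed: once $\aang > \pi/4$ (hence $\ttl$ strictly right of $\vvup$) is established, Lemma~\ref{triangle} applies immediately and uniformly, with no case-by-case slope computation; the three configurations differ only in which circle touches which side, and the argument does not depend on that.
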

\begin{proof}
In each of these configurations, some part of the upper left side of the square is tangent to~$\cca$.  Therefore, in order for $\vvup$ to reach~$\ccb$, $\aang$ cannot be particularly small, and certainly $\aang>\pi/4$:  As seen in \con9 (Figure~\ref{configurations}) which has $\aang=\pi/4$ and $r=1$, a square with $\aang=\pi/4$ and upper left side tangent to $\cca$ can only just barely reach --- and not even with~$\vvup$ --- the smallest possible $\ccb$, that with $r=1$.  Then decreasing~$\aang$ with the square's upper left side still tangent to~$\cca$ would shrink and move the square away from~$\ccb$, so $\aang\le\pi/4$ is not possible here when $r=1$.  It is also not possible when $r>1$ because any larger $\ccb$ would further prevent the square from reaching it.

Therefore, $\ttl$ is to the right of~$\vvup$, as illustrated in Figure~\ref{for121314} for the case of \con12.  Thus by Lemma~\ref{triangle}, $\sside(\aang)$~is a strictly increasing function of~$\aang$ and is therefore not minimal for these values of~$\aang$.
\end{proof}

\begin{lem}\label{lem15}
A minimal square cannot be in \con15.
\end{lem}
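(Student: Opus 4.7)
The plan is to imitate the pattern used in Lemmas \ref{lem71011} and \ref{lem121314}: describe the geometry of \con15 precisely, then exhibit the three perpendiculars $\tta,\ttb,\ttl$ in a configuration to which Lemma~\ref{triangle} applies with a definite sign, so that $s(\aang)$ is strictly monotone at the angle corresponding to \con15 and hence not minimal there.

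The first step is to read off from Figure~\ref{configurations} the precise contact data of \con15 — which side or vertex of the square touches each of $\cca$, $\ccb$, and $\lli$. From the discussion of the \con15-to-\con16 transition given just after the proof of the preceding lemma on configurations, the relevant feature of \con15 is that the upper right side of the square is tangent to $\ccb$ (with $\vvup$ not yet having reached $\cca$), while $\vvdn$ sits on $\lli$ and $\vva$ sits on $\cca$. From this it follows immediately that $\ttb$ is perpendicular to that upper right side, hence parallel to the lower right side, and so $\ttb$ makes the fixed angle $\aang$ with $\lli$. The line $\ttl$ is vertical through the contact point of $\vvdn$ with $\lli$, and $\tta$ passes through $\ooa$ and $\vva$.

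With these three lines identified, the second step is a short planar comparison: I would check that $\tta\cap\ttl$ lies strictly below $\ttb\cap\ttl$. This is a relative-height computation driven by the height of $\ooa$ and $\oob$ above $\lli$ (namely $1$ and $r$) together with the fact, forced by the contact structure, that $\vvb$ sits higher than $\vva$; the slope comparison between $\tta$ and $\ttb$ at the vertical line $\ttl$ then goes in the right direction. Once that inequality is established, Lemma~\ref{triangle} gives that $s$ is strictly increasing at this $\aang$, so \con15 cannot host a minimal square.

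The main obstacle is getting the comparison inequality correct: the distinction between \con15 and its neighbours \con14 and \con16 is subtle, and the sign of the height-difference $\tta\cap\ttl$ versus $\ttb\cap\ttl$ depends on the exact contact structure I have asserted for \con15. I expect no heavy calculation — only a careful picture and a relative-position check — but that picture is precisely where the argument either succeeds or must be adjusted. If the naive comparison fails, a backup is to argue indirectly: rotate the square slightly and check that all three of the quantities listed in the proof of Lemma~\ref{triangle} (radius minus distance from $\ooa$, radius minus distance from $\oob$, depth below $\lli$) change in the sign forced by the \con15 contact data, giving the same monotonicity conclusion.
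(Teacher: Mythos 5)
The key difficulty is that your plan rests on a false premise: in \con15 the relative position of $\tta\cap\ttl$ and $\ttb\cap\ttl$ is \emph{not} constant. The paper's proof begins by observing exactly the opposite of what you assume --- namely, that as $\aang$ increases through the range of \con15, the intersection $\tta\cap\ttl$ starts out \emph{below} $\ttb\cap\ttl$ and ends up \emph{above} it. So no single application of Lemma~\ref{triangle} with a fixed sign can dispose of \con15, and the ``short planar comparison'' you anticipate simply does not exist. (This is also visible indirectly: had the sign been uniform, \con15 would have been handled in the two-line style of Lemmas~\ref{lem71011} and~\ref{lem1618}, and the authors would not have written a page-long proof.) Your proposed backup is no safer, since checking the signs of the three rates of change in Lemma~\ref{triangle}'s proof is precisely equivalent to determining which side of $\ttl$ the triangle lies on, i.e., the same comparison that fails to be one-signed.

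What the paper does instead is show a monotonicity statement \emph{about the crossover}: as the square rotates counterclockwise through \con15, the point $\tta\cap\ttl$ moves strictly upward and $\ttb\cap\ttl$ moves strictly downward, so there is exactly one crossing. Combined with Lemma~\ref{triangle}, this shows $s(\aang)$ strictly increases and then strictly decreases on \con15, hence is nowhere minimal there (the interior point where the sign switches is a local maximum). Proving that monotonicity of the two intersection points is the real work of the lemma; the paper does it by parameterizing the family of inscribed rectangles by a pair $(p,q)$ and establishing four sign facts about partial derivatives of the two side-length functions. Your contact-data description of \con15 ($\vva$ on $\cca$, upper right side tangent to $\ccb$, $\vvdn$ on $\lli$) is consistent with the paper, but the argument you build on top of it does not go through.
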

\begin{proof}
We prove this by showing that as the square rotates counterclockwise through the range of \con15, the intersection between $\tta$ and $\ttl$ moves upward strictly monotonically, and the intersection between $\ttb$ and $\ttl$ moves downward strictly monotonically.  Because it is clear in \con15 that $\ttl$'s intersection with $\tta$ is below that with $\ttb$ for the smaller values of~$\aang$, and above that with $\ttb$ for the larger values of~$\aang$, Lemma~\ref{triangle} implies that $\sside(\aang)$ strictly increases then strictly decreases as a function of~$\aang$, so the result follows.  (This seems to imply that there is a maximal square in \con15.  However, this article does not address maximal squares.)

We give the proof that the intersection between $\tta$ and $\ttl$ moves upward strictly monotonically;  an analogous proof shows that the intersection between $\ttb$ and $\ttl$ moves downward strictly monotonically.

\begin{figure}[ht]
  \centering
  \includegraphics[width=0.65\linewidth]{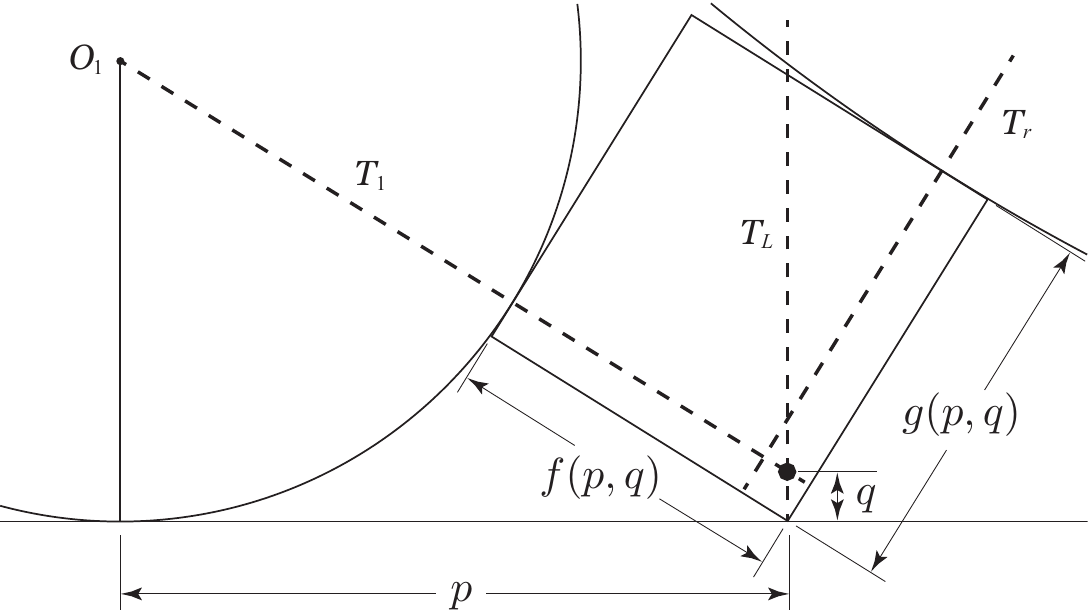}
  \caption{For Lemma~\ref{lem15}, the rectangle determined by the point~$(p,q)$.}
  \label{for15a}
\end{figure}

Consider not just inscribed squares, but inscribed rectangles in general, determined by the pair $(p,q)$ as shown in Figure~\ref{for15a}, where $p$ is the horizontal distance between $\ooa$ and the bottom vertex, and $q$ is the height above $L$ of the intersection between $\tta$ and~$\ttl$.  Let
\begin{align*}
f(p,q) &= \text{length of lower left side of rectangle,} \\
g(p,q) &= \text{length of lower right side of rectangle.}
\end{align*}
For each $p$ there is a unique $q$, say $q=h(p)$, such that the rectangle is a square, i.e.,\ $f(p,q)=g(p,q)$. Our goal is to show that $h(p)$ is a strictly increasing function of~$p$.  We will do this by showing that
\begin{itemize}
\item[]
\begin{itemize}
\item[(1)]  $f(p,q)$ is a strictly increasing function of~$p$,
\item[(2)]  $g(p,q)$ is a strictly decreasing function of~$p$,
\item[(3)]  $f(p,q)$ is a strictly decreasing function of~$q$,
\item[(4)]  $g(p,q)$ is a strictly increasing function of~$q$.
\end{itemize}
\end{itemize}
This will complete the proof because if point $(p,q)$ gives a square, then (1) and (2) imply that increasing $p$ will cause the lower left side to become larger than the lower right side, so by (3) and (4), $q$ must be increased in order to restore equality of the side lengths.

\begin{figure}[ht]
  \centering
  \includegraphics[width=0.65\linewidth]{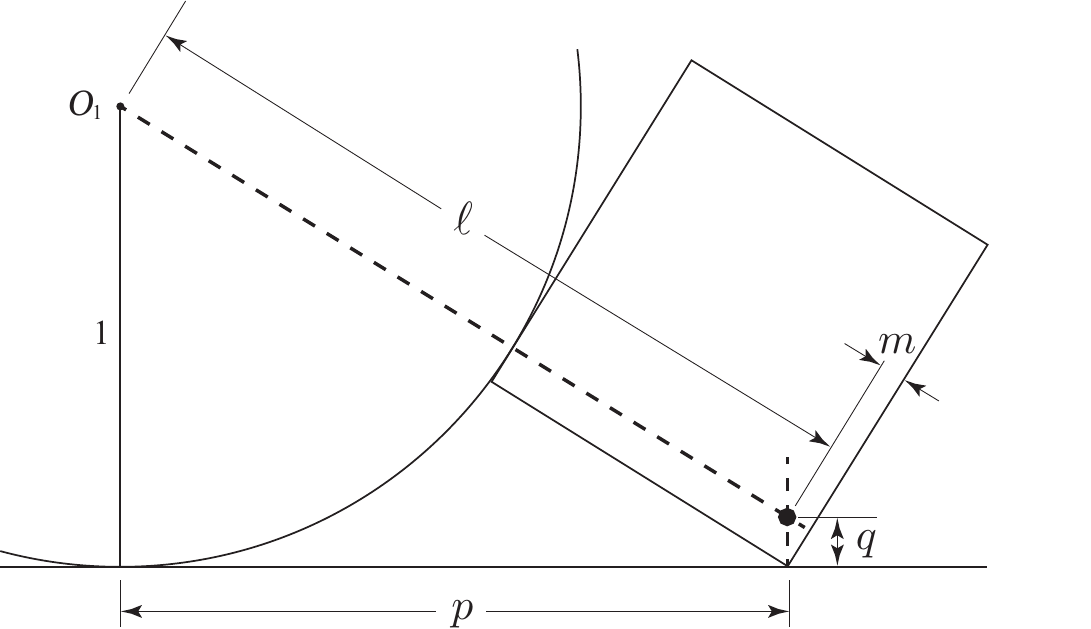}
  \caption{For Lemma~\ref{lem15}, notation for the proof that $f(p,q)$ is a strictly increasing function of~$p$.}
  \label{for15b}
\end{figure}

For (1), fix $q$ and note that $f(p,q)$ is $\ell+m-1$ in Figure~\ref{for15b}.  By similar triangles, $m/q=(1-q)/\ell$.  Define a function
\[
j(\ell) = \ell + m - 1 = \ell + \frac{q(1-q)}{\ell} - 1,
\]
which equals $f(p,q)$.  An easy calculation shows that $j'(\ell)>0$ because $\ell>1/2$, and since $p$ and $\ell$ increase together, this gives $\frac{\partial f}{\partial p}>0$, thus proving~(1).

\begin{figure}[ht]
  \centering
  \includegraphics[width=0.7\linewidth]{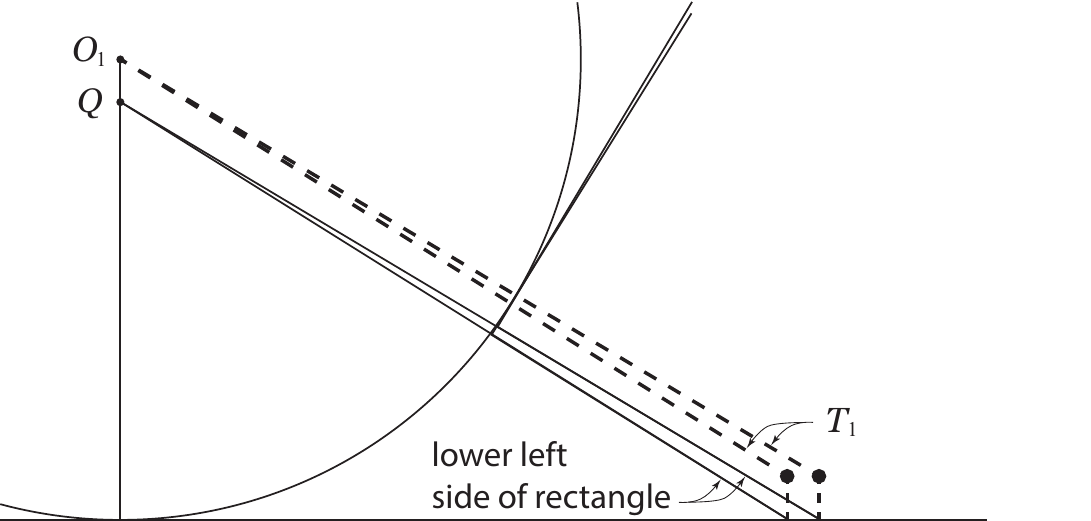}
  \caption{For Lemma~\ref{lem15}, geometry for the proof that $g(p,q)$ is a strictly decreasing function of~$p$.}
  \label{for15c}
\end{figure}

For (2), note that $g(p,q)$ is exactly $r$ less than the distance from $\oob$ to the lower left side of the rectangle.  That side is on a line through a point $Q$ below~$\ooa$.  If $p$ increases while $q$ stays fixed, then as shown in Figure~\ref{for15c}, the lower left side of the rectangle is still aligned with $Q$ because that side is always parallel to~$\tta$.  Therefore, as $p$ increases, the distance from $\oob$ to the lower left side of the rectangle decreases, thus proving~(2).

\begin{figure}[h]
  \centering
  \includegraphics[width=0.7\linewidth]{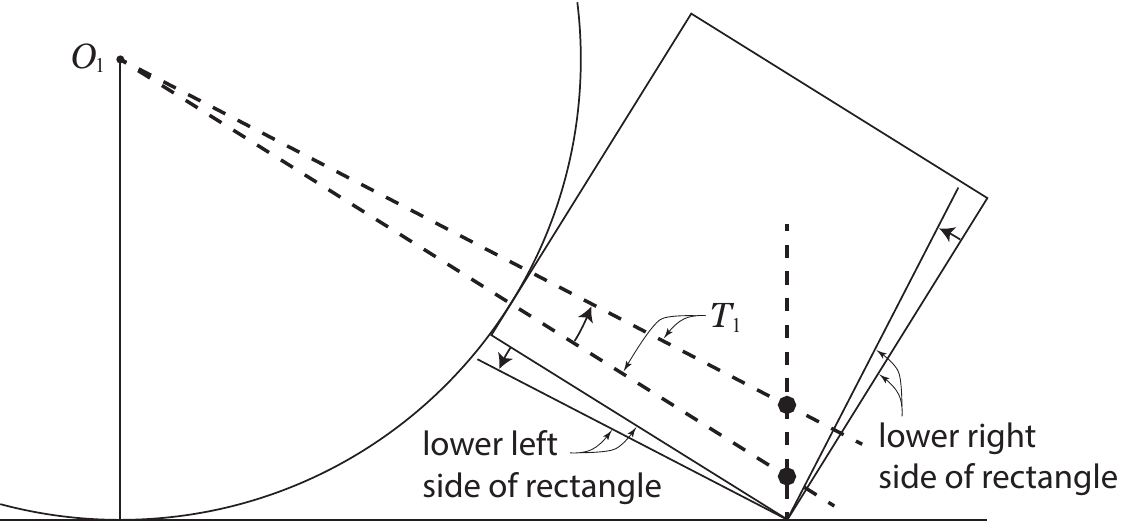}
  \caption{For Lemma~\ref{lem15}, geometry for proofs that $f(p,q)$ is strictly decreasing, and $g(p,q)$ is strictly increasing, as functions of~$q$.}
  \label{for15d}
\end{figure}

For (3) and (4), note that $f(p,q)$ is exactly $1$ less than the distance from $\ooa$ to the lower right side of the rectangle, and $g(p,q)$ is exactly $r$ less than the distance from $\oob$ to the lower left side of the rectangle.  If $p$ is fixed and $q$ increases, then as shown in Figure~\ref{for15d}, the lower left side of the square (which is parallel to~$\tta$) rotates away from $\oob$, and the lower right side (which is perpendicular to~$\tta$) rotates toward~$\ooa$.  This proves (3) and~(4), thus completing the proof.
\end{proof}

\begin{lem}\label{lem1618}
A minimal square cannot be in \con16 or \con18.
\end{lem}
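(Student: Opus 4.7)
The plan is to reuse the strategy of Lemmas~\ref{lem71011} and~\ref{lem121314}: in each of Con16 and Con18 I would identify the precise positions of the perpendiculars $\tta$, $\ttb$, $\ttl$, show that their intersections with $\ttl$ are in a definite vertical order throughout the configuration, then invoke Lemma~\ref{triangle} to conclude that $\sside(\aang)$ is strictly monotonic there and therefore cannot attain its minimum value.

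First I would fix the geometric data for each configuration. From their placement in Figure~\ref{configurations} between Con15 (where $\vvup$ lies on $\cca$) and Con19 (the reflected analogue of Con6), both Con16 and Con18 have $\aang>\pi/4$ and feature a side tangency of the square with~$\ccb$, so $\ttb$ meets the upper right side of the square at an interior point rather than at~$\vvb$. The contact with $\cca$ distinguishes the two configurations (a side tangency in one case and a vertex contact in the other), so in each case $\tta$ is determined accordingly, while $\ttl$ passes through the point where the lower side of the square meets~$\lli$.

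Second, I would compare the heights of $\tta\cap\ttl$ and $\ttb\cap\ttl$. Because $\aang>\pi/4$, the line $\ttl$ makes an angle greater than $45^\circ$ with~$\lli$ and lies to the right of~$\vvup$; a slope comparison, aided if necessary by an angle-bisector auxiliary line like $L''$ in Lemma~\ref{lem456}, should show that $\tta$ meets $\ttl$ strictly below $\ttb\cap\ttl$ throughout both configurations. Lemma~\ref{triangle} then forces $\sside(\aang)$ to be strictly increasing, so no minimum can occur there.

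The main obstacle I anticipate is the case analysis: Con16 and Con18 differ in how the square touches $\cca$, and the simultaneous tangency with $\ccb$ restricts the orientation, so verifying the vertical order of the intersections cleanly likely requires a short bespoke argument for each rather than a single unified derivation. I expect each such argument to be of the same flavor and length as the proofs of Lemmas~\ref{lem71011} and~\ref{lem456}, with the bulk of the work being a careful description of where the tangent contacts fall rather than any substantial new calculation.
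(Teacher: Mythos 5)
Your strategy is the same one the paper uses: determine the vertical order of $\tta\cap\ttl$ and $\ttb\cap\ttl$, then invoke Lemma~\ref{triangle} to conclude strict monotonicity of $\sside(\aang)$, hence non-minimality. However, you have the geometry reversed. The paper's proof observes that in Con16 and Con18, $\tta$ intersects $\ttl$ \emph{above} the intersection of $\ttl$ and $\ttb$, so $\sside(\aang)$ is strictly \emph{decreasing}; you assert that $\tta$ meets $\ttl$ \emph{below} $\ttb\cap\ttl$, giving strictly \emph{increasing}. Your version cannot be right: the proof of Lemma~\ref{lem15} shows that by the larger $\aang$-values of Con15, $\tta\cap\ttl$ has already risen above $\ttb\cap\ttl$, and Con16 (and its variants such as Con18) follows Con15 in $\aang$, so the ``above'' relation persists there. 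This is also consistent with the global picture: $\sside$ attains its maximum in Con15 and then must return toward $\sside(0)$ as $\aang\to\pi/2^-$, so it is decreasing, not increasing, past Con15.

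Two further points worth flagging. First, the statement that ``$\ttl$ makes an angle greater than $45^\circ$ with $\lli$'' is a slip: $\ttl$ is by definition the perpendicular to $\lli$ at the point of contact, so it always makes a right angle with $\lli$; the correct observation for $\aang>\pi/4$ is only that $\ttl$ lies to the right of $\vvup$. Second, because you assert the wrong vertical order, the slope-comparison you sketch (with or without the $L''$ auxiliary line from Lemma~\ref{lem456}) would not close as written; you would need to argue the opposite inequality. The final conclusion (non-minimality) happens to follow from strict monotonicity in either direction, but as a proof your intermediate geometric claim is incorrect and the supporting argument for it would fail.
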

\begin{proof}
In each of these configurations, $\tta$ intersects $\ttl$ above the intersection between $\ttl$ and~$\ttb$.  Therefore by Lemma~\ref{triangle}, $\sside(\aang)$ is strictly decreasing and therefore not minimal at the corresponding~$\aang$.
\end{proof}

If $r=1$, then \con19 is simply the reflection of \con6, so for \con19 we focus on the case $r\neq 1$.

\begin{lem}\label{lem19}
If $r\neq 1$, then a minimal square cannot be in \con19.
\end{lem}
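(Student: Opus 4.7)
The plan is to dispatch Con19 by the same strategy used in Lemmas \ref{lem71011} and \ref{lem1618}: show that for $r>1$ the three perpendiculars $\tta,\ttb,\ttl$ meet in a configuration that forces $s(\aang)$ to be strictly monotonic throughout the entire Con19 range, so that Lemma~\ref{triangle} rules out any minimum. Since Con19 is the mirror image of Con6 and Con6 does contain the minimum of $s$, the whole content of the lemma is that the left-right symmetry breaks precisely when $r>1$, and the broken symmetry picks out one side of Lemma~\ref{triangle}'s dichotomy.

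First I would pin down the geometry of Con19: the vertex below its own circle's center is now $\vvb$ (on $\ccb$), $\vvdn$ sits on $\lli$, $\vva$ lies on $\cca$, and the angle $\aang$ ranges near $\pi/2$. Next I would examine where $\tta\cap\ttl$ sits relative to $\ttb$. The natural tool is the tangent-circle construction of Figure~\ref{tangentcircle}: the circle on diameter $\overline{\ooa\oob}$ is tangent to $\lli$ at $Q$, and for $r>1$ the point $Q$ lies strictly closer to $\ooa$ than to $\oob$. By tracking where $\vvdn$ sits on $\lli$ relative to $Q$ throughout Con19 --- an argument parallel to the one used to forbid the passage ``\con8 to \con9 and \con10'' except when $r=1$ --- I expect to conclude that $\tta\cap\ttl$ lies on a definite side of $\ttb$ (above, giving $s$ strictly decreasing, by analogy with Lemma~\ref{lem1618}) for every inscribed square in Con19. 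Lemma~\ref{triangle} then rules out a minimum.

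The main obstacle is uniformity across the full Con19 range. Unlike Con4--Con5 or Con16, Con19 has all three contact points at corners (no tangency constraint locks a side in place), so a single synthetic picture may not encode the whole range of permissible $\aang$. If the tangent-circle argument does not close the proof directly at the endpoints of Con19, I would fall back on a monotonicity argument in the spirit of Lemma~\ref{lem15}: parameterize the Con19 rectangles by $(p,q)$ analogous to Figure~\ref{for15a}, show that the side-length-equalizing height $h(p)$ is monotone when $r>1$, and read off strict monotonicity of $s(\aang)$ from that. Either way, the decisive input is the asymmetric location of $Q$, which is what the hypothesis $r\ne 1$ gives us.
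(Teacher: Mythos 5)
The paper's proof of Lemma~\ref{lem19} is a direct comparison argument, not a monotonicity argument. Given a Con19 square at angle $\theta$, the paper constructs a horizontal reflection of that square, translated so that the image $P'$ of $\vva$ lands on $\ccb$, and then shows the image $Q'$ of $\vvb$ falls strictly inside the disk bounded by $\cca$ (because $\cca$ is steeper than $\ccb$ at every height in $(0,1)$). Hence the reflected square overlaps $\cca$, so the inscribed square at the mirror angle $\pi/2-\theta$ is strictly smaller than the original. No appeal to the sign of $s'(\aang)$ is made at all.

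Your approach has a genuine gap: it is built entirely on establishing that $s(\aang)$ is strictly monotone across all of Con19, which is very likely false for $r$ close to $1$. At $r=1$, Con19 is the exact reflection of Con6, and $s$ attains a (global) local minimum inside Con19. By continuity of the side-length function in $r$, for $r$ slightly above $1$ there is still an interior local minimum of $s$ within the Con19 range --- it is merely larger than the Con6 minimum, which is why Proposition~\ref{prop6only} can still hold. So the dichotomy in Lemma~\ref{triangle} does not give a uniform sign across Con19, and both your primary plan and the fallback (a Lemma~\ref{lem15}-style $(p,q)$ parametrization, which even in Lemma~\ref{lem15} establishes only a unique local extremum, not monotonicity) would run into the same obstruction: they cannot distinguish a non-global interior minimum from a genuine one. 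What the hypothesis $r\neq 1$ actually buys you is not monotonicity but a pointwise size comparison between each Con19 square and its mirror partner in Con6; any successful proof has to make that comparison rather than try to force $s'$ to have constant sign.
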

\begin{proof}
Given a square at angle $\theta$ in \con19, we claim that the inscribed square in the reflected orientation, i.e.,\ at angle $\pi/2-\theta$, is smaller.  This fact can be seen by mapping the original square to a horizontal reflection that is positioned so that the image $P'$ of vertex $P$ ($=\vva$) is on~$\ccb$, as shown in Figure~\ref{for19}.  In the process, vertex $Q$ ($=\vvb$) is mapped to a point $Q'$ inside the disk bounded by $\cca$ for the following reason.  The acute angles $\phi$ made with the horizontal are the same for the line through $P$ and $Q'$ and the line through $P'$ and~$Q$, while circle $\cca$ is steeper than $\ccb$ at every height above $L$ between $0$ and~$1$.

\begin{figure}[ht]
  \centering
  \includegraphics[width=0.7\linewidth]{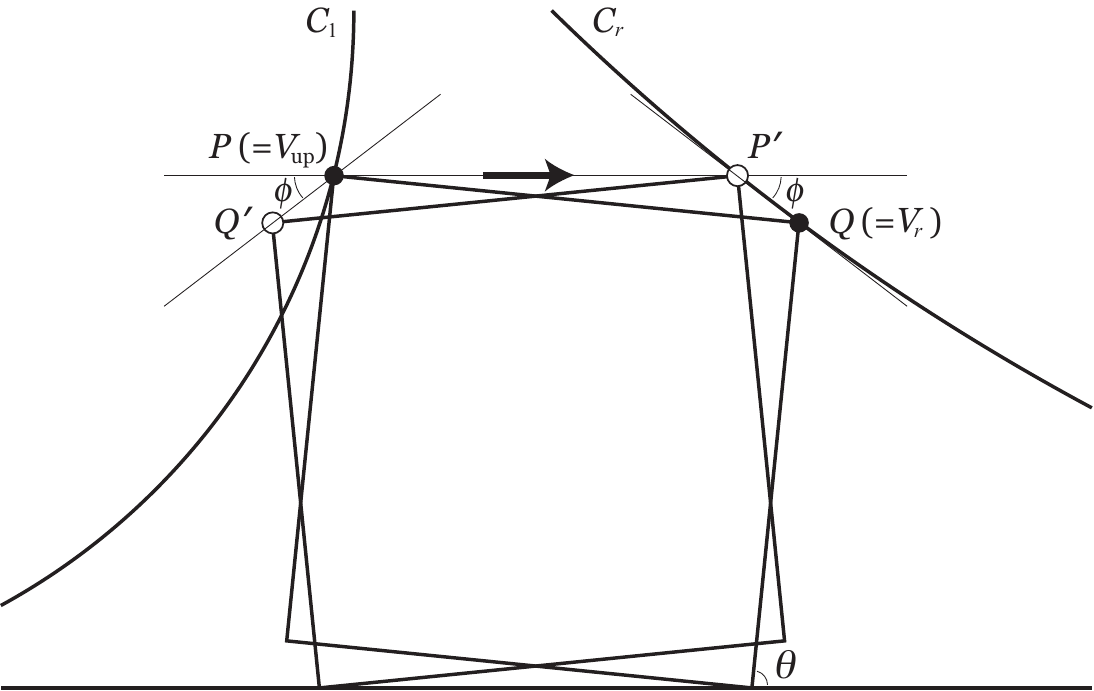}
  \caption{Illustration for Lemma~\ref{lem19} showing the horizontal reflection of the square that sends point $P$ on $\cca$ to a point $P'$ on~$\ccb$.  (Such a reflection is not necessarily about the midline of the square.)}
  \label{for19}
\end{figure}

Therefore, the inscribed square at angle $\pi/2-\theta$ must be smaller than the reflected square and thus the original square.
\end{proof}

To prove that a minimal square cannot be in \con8, \con9, or \con17, preliminary results and definitions are helpful.  

\begin{definition}
\emph{\con10$^+$} will refer to \con10 but will also allow $r\leq 1$.  \emph{\con8$^+$} will refer to the union of \con7, \con8, \con9, and \con10, and will also allow $r\leq 1$.
\end{definition}

\begin{rem}
A proof that a minimal square cannot be in \con8$^+$ constitutes a proof for \con8, \con9, and \con17 (as well as \con7, \con10, \con16, and \con18, for which proofs have already been given) because \con17 with $r=r_0\ge 1$ is equivalent to a version of \con8 with $r=1/r_0\le 1$.
\end{rem}

\begin{lem}\label{lessthanM}
The square in \con10$^+$ has side length less than or equal to~$M$, where
\[
M = \frac{2r}{r+\sqrt{8}\sqrt{r}+1},
\]
with equality if and only if $r=1$.
\end{lem}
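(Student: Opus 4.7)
The plan is to translate the defining contacts of \con10$^+$ into coordinates and then invoke the monotonicity supplied by Lemma~\ref{lem71011}. Place $\lli$ as the $x$-axis, so that $\ooa=(0,1)$ and $\oob=(2\sqrt{r},r)$, and write $\vvdn=(x_0,0)$. The \con10$^+$ contact conditions---$\vvdn\in\lli$, $\vva\in\cca$, and an interior-point tangency of one side with $\ccb$---yield two algebraic equations relating $x_0$, $\sside$, and $\aang$, which for each admissible $\aang$ determine $x_0$ and $\sside$.

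By Lemma~\ref{lem71011}, $\sside(\aang)$ is strictly increasing throughout the \con10$^+$ range of $\aang$, so its supremum is attained at the upper endpoint $\aang^{*}$. That endpoint occurs precisely when the tangency point on the side migrates to the corner $\vvb$; at $\aang^{*}$ we then have $\vvb\in\ccb$ with $\oob\vvb$ perpendicular to the side. Adjoining this extra condition to the prior equations determines the extremal square, and a direct computation yields $\sside(\aang^{*})=r(1-\sin\aang^{*})/\sin\aang^{*}$, with $\aang^{*}$ implicitly pinned by the $\vva$-on-$\cca$ equation.

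It remains to establish the algebraic inequality $\sside(\aang^{*})\leq M$ with equality iff $r=1$. Setting $u=\sqrt{r}$, one has $M=2u^2/[(u+\sqrt{2}-1)(u+\sqrt{2}+1)]$, and I expect the quantity $M-\sside(\aang^{*})$, after clearing denominators and substituting the defining equation for $\aang^{*}$, to factor as $(u-1)^{2}\cdot g$ with $g>0$, so that equality holds only at $u=1$. The principal obstacle is this algebraic simplification, which is likely to require a careful choice of auxiliary variable (such as $t=\tan(\aang^{*}/2)$) to keep the resulting polynomial manageable. As a calibration, at $r=1$ the symmetry of the configuration forces $\aang^{*}=\pi/4$ and $\sside=\sqrt{2}-1=M$, confirming both the equality case and the correctness of the setup.
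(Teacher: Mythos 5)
Your plan takes a genuinely different route from the paper's. The paper avoids any extremal-angle analysis: it introduces the point $P$ on $\lli$ at which the radial distance to $\cca$ (that is, $|P\ooa|-1$) equals the radial distance to $\ccb$, observes from the \con10$^+$ geometry that $\vvdn$ lies at or to the left of $P$ (with equality exactly when $r=1$), and bounds the side length $s$ above by the radial distance from $\vvdn$ to $\cca$, which in turn is at most the common radial distance at $P$. That common value is computed in closed form and equals $M$. This is a one-inequality argument requiring no elimination of an implicitly defined angle.

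Your version has a genuine gap at its crux. You reduce the claim to ``$s(\theta^*)\le M$ with equality iff $r=1$'' but never prove that inequality: you write that you ``expect'' $M-s(\theta^*)$ to factor as $(u-1)^2 g$ with $g>0$, and that ``the principal obstacle is this algebraic simplification.'' An expression $s(\theta^*)=r(1-\sin\theta^*)/\sin\theta^*$, with $\theta^*$ pinned down only implicitly by an unstated contact equation, is not a bound; eliminating $\theta^*$ and establishing the resulting inequality, including the equality characterization, is the entire content of the lemma and is missing from your sketch. Two further points need attention. First, Lemma~\ref{lem71011} is proved under the standing assumption $r\ge 1$, while \con10$^+$ explicitly allows $r\le 1$, so invoking its monotonicity on the extended range requires re-verification. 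Second, your listed \con10$^+$ contacts (``$\vva\in\cca$'') look wrong: in \con10$^+$ a \emph{side} of the square is tangent to $\cca$, which is what makes the perpendicular distance from $\vvdn$ to that side --- equal to $s$ --- at most the radial distance from $\vvdn$ to $\cca$. A vertex contact $\vva\in\cca$ gives the reverse inequality $s\ge$ radial distance; correcting the configuration changes the parametrization and hence your formula for $s(\theta^*)$.
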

\begin{proof}
Let $P$ be the point on line $L$ between $\cca$ and $\ccb$ such that the distance from $P$ to the closest point on $\cca$ (along the line through $P$ and~$\ooa$) equals the distance from $P$ to the closest point on~$\ccb$.  The line segment from $P$ to $\ooa$ can be viewed as the hypotenuse of a right triangle, as can the line segment from $P$ to~$\oob$, so the equality of distance can be written using the Pythagorean theorem as follows.  Let $p$ be the distance from $P$ to the intersection of $\lli$ and~$\cca$.  Because $2\sqrt{r}$ is the distance from the intersection of $\lli$ and $\cca$ to the intersection of $\lli$ and~$\ccb$ (see Figure~\ref{equations}), we have
\[
\sqrt{1+p^2} - 1 = \sqrt{r^2+(2\sqrt{r}-p)^2} - r.
\]
By basic geometry, the square in \con10$^+$ must have $\vvdn$ to the left of~$P$, unless $r=1$, in which case $\vvdn=P$.  Therefore the square has side length less than or equal to $\sqrt{1+p^2} - 1$, with equality exactly if and only if $r=1$.

It is straightforward to confirm, by substituting the following into the equation above, and by noting that this is the desired solution because it lies in $(0,2\sqrt{r})$ for $r>0$, that
\[
p = \frac{\sqrt{8}r + 2\sqrt{r}}{r+\sqrt{8}\sqrt{r}+1},
\]
and then that
\[
\sqrt{1+p^2} - 1 = \frac{2r}{r+\sqrt{8}\sqrt{r}+1},
\]
which is $M$ in the statement of the lemma.
\end{proof}

\begin{lem}\label{lem17a}
For a square in \con17, if the distance from $L$ to the midpoint of the upper right side of the square is less than or equal to $r-r/\sqrt{2}$, then the square is not minimal.
\end{lem}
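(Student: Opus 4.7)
The plan is to invoke Lemma~\ref{triangle} by showing that, under the stated hypothesis, the relative position of $\tta\cap\ttl$ and $\ttb\cap\ttl$ forces $\sside(\aang)$ to be strictly monotonic at the angle under consideration. Specifically, in \con17 the upper right side of the square is tangent to $\ccb$ at some point $P$, vertex $\vva$ lies on $\cca$, and $\vvdn$ lies on $\lli$; hence $\ttb$ is the line $\oob P$, $\tta$ is the line $\ooa\vva$, and $\ttl$ is the vertical line through $\vvdn$.

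First I would place $\lli$ on the $x$-axis, set $\ooa=(0,1)$ and $\oob=(2\sqrt{r},r)$, and parametrize the square by its tilt $\aang$ and the location of $\vvdn$. Since $\oob P$ is perpendicular to the upper right side and has length $r$, the height of $P$ above $\lli$ equals $r(1-\cos\aang)$, and the height of the midpoint $m$ of that side equals $r(1-\cos\aang)+d\sin\aang$, where $d$ is the signed displacement from $P$ to $m$ along the side. The threshold value $r-r/\sqrt{2}=r(1-1/\sqrt{2})$ is precisely the height of $P$ when $\aang=\pi/4$, i.e., when $\oob P$ makes a $45^\circ$ angle with $\lli$; this strongly suggests the bound was chosen so that equality in the hypothesis matches a critical geometric balance.

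Next I would derive explicit expressions for the heights of $\tta\cap\ttl$ and $\ttb\cap\ttl$ in terms of $\aang$, the position of $\vvdn$, and the signed displacement $d$, using the slopes and base points of the three lines. The goal of the calculation is to show that the inequality
\[
r(1-\cos\aang)+d\sin\aang \;\le\; r\bigl(1-1/\sqrt{2}\bigr)
\]
forces a strict comparison between these two heights in a fixed direction, independent of the allowable values of $d$ and $\vvdn$ compatible with \con17. Applying Lemma~\ref{triangle} would then immediately yield strict monotonicity of $\sside(\aang)$ at this angle, so by continuity the square cannot be minimal.

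The main obstacle will be the second stage: translating the midpoint-height bound into a clean, one-sided comparison of the intersection heights on $\ttl$. In particular, one must verify that the sign of the resulting inequality does not flip across the range of configurations permitted in \con17, and that the inequality becomes strict as soon as the midpoint strictly drops below $r-r/\sqrt{2}$. I expect the equality case of the hypothesis to correspond exactly to $\tta\cap\ttl$ and $\ttb\cap\ttl$ coinciding on $\ttl$, so that the bookkeeping reduces to tracking a single sign as the midpoint is lowered.
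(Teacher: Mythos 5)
Your plan does not match the paper's proof, and more importantly it is not yet a proof: the step you flag as ``the main obstacle'' --- translating the midpoint-height bound $\le r - r/\sqrt{2}$ into a one-sided comparison of the heights of $\tta\cap\ttl$ and $\ttb\cap\ttl$ --- is precisely the content that would need to be established, and you have offered neither a computation nor a geometric reason that it holds. The guess that equality in the hypothesis corresponds to the two intersections coinciding on $\ttl$ is, as far as I can tell, unfounded; nothing in the geometry singles out $r - r/\sqrt{2}$ as the crossover height for that triangle, and since Lemma~\ref{lem17a} is deliberately a rough sufficient condition (it is only used to derive Corollary~\ref{cor17a}, which in turn only needs to apply for $r \ge 3$), there is no reason to expect the threshold to be sharp in the sense you describe.

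The paper's argument is global rather than local. Instead of differentiating $\sside(\aang)$, it reflects the given square about the vertical line through its center and shows that the reflected square strictly overlaps both $\cca$ and $\ccb$; an inscribed square in the reflected orientation is therefore strictly smaller, so the original square is not minimal. The hypothesis $\le r - r/\sqrt{2}$ enters in a quite different place than you anticipated: after showing (by an isoceles-triangle argument) that the line through $P=\vvup$ and $Q'$ has slope $1$ and the line through $Q=\vvb$ and $P'$ has slope $-1$, the paper considers a secant of $\ccb$ joining $Q$ to the point of $\ccb$ at the height of $P'$; the hypothesis bounds the height of the midpoint of this secant, which forces its perpendicular-bisecting radius of $\ccb$ to have slope steeper than $1$, hence the secant to have slope shallower than $-1$, hence $P'$ to lie strictly inside the disk bounded by $\ccb$. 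A separate argument involving the height $1 - 1/\sqrt{2}$ on the $\cca$ side then handles $Q'$. So the paper's method requires no monitoring of derivative signs and no appeal to Lemma~\ref{triangle} at all; it is a one-shot comparison between two inscribed squares. If you want to pursue your route, you would need to actually carry out the slope/intersection-height computation for $\tta$, $\ttb$, $\ttl$ and verify that the hypothesis controls its sign uniformly over all of \con17 --- and the fact that the paper avoids a direct Lemma~\ref{triangle} argument for the \con8/\con9/\con17 block (handling it instead via the more elaborate pivot machinery of Lemma~\ref{pivot} and Lemma~\ref{lem8917}) should make you cautious about whether that computation will come out one-sided.
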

\begin{proof}
Given such a square in \con17 as shown in Figure~\ref{midlinereflection}, we claim that reflecting the square about its midline --- the vertical line through its center point --- will cause it to overlap both $\cca$ and~$\ccb$.  This will prove the lemma because an inscribed square in the same orientation as the reflected square is thus smaller than the original square.  We prove the claim by showing that reflecting the square takes vertex $P$ (= $\vvup$) as shown in Figure~\ref{midlinereflection} to vertex $P'$ in the interior of the disk with boundary $\ccb$, and takes $Q$ (= $\vvb$) to vertex $Q'$ such that either $Q'$ or part of the left side of the reflected square is in the interior of the disk with boundary~$\cca$.

\begin{figure}[ht]
  \centering
  \includegraphics[width=0.85\linewidth]{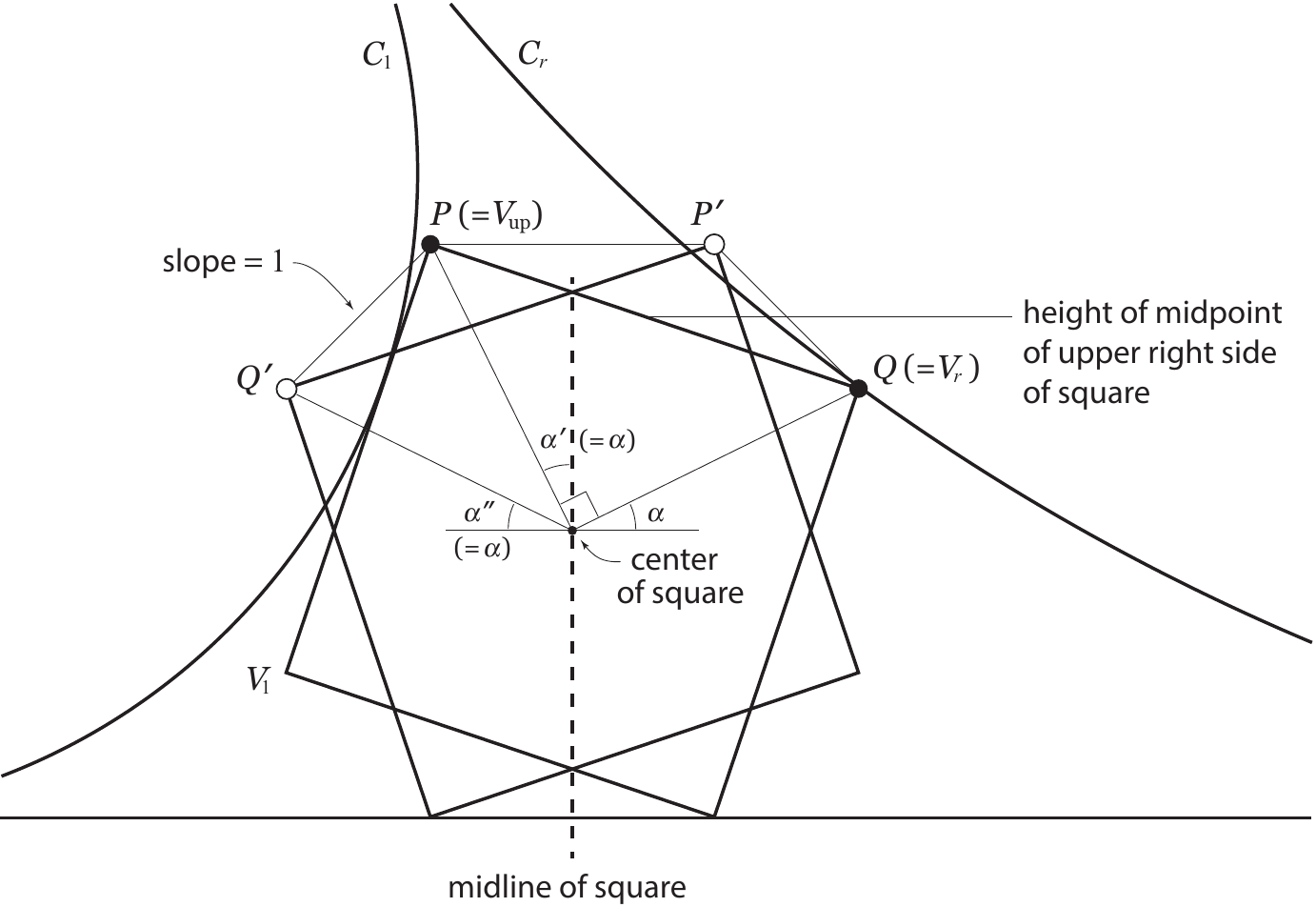}
  \caption{For Lemma~\ref{lem17a}, the horizontal reflection of the square about its midline.}
  \label{midlinereflection}
\end{figure}

First, the line through $P$ and $Q'$ has slope~$1$, and the line through $Q$ and $P'$ has slope $-1$, by the following reasoning:  In Figure~\ref{midlinereflection}, the fact that $\alpha'=\alpha$ is clear because of the right angle shown, and $\alpha''=\alpha$ because of the symmetry of the reflection.  The line segment between $P$ and $Q'$ is seen to be the base of an isoceles triangle since $P$ and $Q'$ are the same distance from the center of the square.  Because $\alpha'=\alpha''$, the isoceles triangle is symmetric about a line of slope~$-1$ through the center of the square. Therefore, the base of the triangle, and thus the line through $P$ and $Q'$, has slope~$1$.  Similar reasoning shows that the line through $Q$ and $P'$ has slope~$-1$.

Next, we claim that the secant line segment intersecting $\ccb$ at $Q$ and the point nearest horizontally to $P'$ has slope shallower than $-1$.  (This secant would lie almost exactly along $\ccb$ in Figure~\ref{midlinereflection}, and would not be distinguishable in the figure.)  To prove the claim, we note that the midpoint of that secant is at the same height as the upper right side of the square, no higher than $r-r/\sqrt{2}$ by hypothesis, because the secant's endpoints are at the heights of vertices $P$ and~$Q$.  A secant has as a perpendicular bisector a radial line segment of~$\ccb$; such a radial line segment for this secant must reach below a height of $r-r/\sqrt{2}$ since it passes through the secant's midpoint.  Thus, this radial line segment must have slope steeper than~$1$, because at slope $1$ it would reach down only to height $r-r/\sqrt{2}$ with its length $r$ and top endpoint at height~$r$.  The secant, which is perpendicular to it, must therefore have slope shallower than~$-1$.

Therefore, $P'$ is in the interior of the disk with boundary~$\ccb$.

To show the overlap with $\cca$, we first note that $Q'$ is at height greater than $1-1/\sqrt{2}$.  This is because $\vvb$ of a square in \con17 must be higher than that in \con9 or \con10 (whichever applies, depending on whether $r=1$) by the basic geometry of the counterclockwise rotation toward \con17.  Among these three configurations, \con9 with its $r=1$ has the lowest $\vvb$, at height $1-1/\sqrt{2}$ since $\aang=\pi/4$.

Now, $Q'$ lies on the line of slope $1$ through~$P$, a line whose lower intersection with $\cca$ is at height less than $1-1/\sqrt{2}$ because $1-1/\sqrt{2}$ is where the tangent to $\cca$ has slope~$1$.  Therefore, $Q'$ is on the line of slope $1$ through~$P$ on either the segment from $P$ to~$\cca$ or the segment that lies in the interior of the disk with boundary $\cca$.  In the latter case, the proof of the lemma is complete.  The former case can only occur if the original square is so large that $Q'$ is above a portion of the top half of~$\cca$, because $Q'$ is straight above~$\vva$.  In that case, the left side of the reflected square must intersect the interior of the disk with boundary $\cca$, completing the proof of the lemma, because the reflected square is also a reflection of the original square about a \textit{horizontal} line through its center point.  Since the left side of the original square is tangent to $\cca$, and the horizontal line of reflection is below the center of~$\cca$, the reflected left side must overlap $\cca$ and thus intersect the interior of the disk with boundary~$\cca$.
\end{proof}

\begin{cor}\label{cor17a}
If $r\geq 3$, then a minimal square cannot be in \con17.
\end{cor}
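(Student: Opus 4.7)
By Lemma~\ref{lem17a}, it suffices to show that for $r\ge 3$ every inscribed square in \con17 has the midpoint of its upper right side at height at most $r-r/\sqrt{2}$ above~$L$. My plan is to bound this midpoint height by combining a region-height constraint with the tangency conditions defining \con17.

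Parametrising such a square by its angle $\aang\in(0,\pi/2)$ and side length $\sside$, and taking $\vvdn$ on $\lli$, the midpoint in question has height $\sside\sin\aang+\tfrac{\sside}{2}\cos\aang$, while the upper vertex has height $y_{\vvup}=\sside(\sin\aang+\cos\aang)$. Since $\vvup$ lies within the region bounded by $\cca$, $\ccb$, and $\lli$, its height is at most the height $2r/(r+1)$ of the tangency point between the two circles. This immediately bounds the midpoint height by $2r/(r+1)$, which is strictly less than $r(1-1/\sqrt{2})$ exactly when $r\ge 3+2\sqrt{2}$; the corollary is therefore established in that range without further work.

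To close the remaining interval $r\in[3,\,3+2\sqrt{2})$, I would exploit the finer tangency conditions defining \con17: each of $\vvdn\in\lli$, $\vvb\in\ccb$, and the contact with $\cca$ gives an algebraic equation in $\sside$, $\aang$, and $r$. These equations determine $\sside$ as a function of $\aang$ and $r$, after which the midpoint height becomes an explicit function of these two parameters. I would then maximise this function over the admissible range of $\aang$ and verify that the maximum is at most $r(1-1/\sqrt{2})$ whenever $r\ge 3$.

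The main obstacle is the algebraic messiness of this optimisation, since the resulting expressions for $\sside(\aang,r)$ involve square roots coming from the circle equations. A useful simplification may be the reflection equivalence between \con17 at $r=r_0\ge 1$ and \con8 at $r=1/r_0\le 1$ noted in the earlier remark, which recasts the problem in a smaller-radius regime where a bound analogous to that of Lemma~\ref{lessthanM} can be applied; combined with the monotonicity of the transformed quantities, this should deliver the cutoff $r=3$ cleanly.
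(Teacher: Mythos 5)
Your plan shares the paper's key first step---invoking Lemma~\ref{lem17a} and then bounding the height $h$ of the midpoint of the upper right side by $r-r/\sqrt{2}$---but the specific bound you derive, $h\le y_{\vvup}\le 2r/(r+1)$, is too weak. As you correctly note, it yields the conclusion only for $r\ge 3+2\sqrt{2}\approx 5.83$, leaving the range $[3,\,3+2\sqrt{2})$ open. Your proposal for closing that range is a sketch, not a proof: you point to ``finer tangency conditions'' and a ``messy optimisation,'' and you speculate that the reflection to \con8 at radius $1/r_0$ plus a Lemma~\ref{lessthanM}-style bound ``should deliver the cutoff $r=3$ cleanly,'' but you do not carry out this computation or explain why the cutoff would land exactly at $3$. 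That is a genuine gap.

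The missing idea is to \emph{use the minimality hypothesis itself} to bound the side length $s$. The paper's argument is: if the square in \con17 is minimal, then $s$ is at most the side length of the \con10 square, which by Lemma~\ref{lessthanM} is at most $M=2r/(r+\sqrt{8}\sqrt{r}+1)$. Combine this with the purely geometric fact that the distance from $\vvdn$ to the midpoint of the opposite side of any square is exactly $(\sqrt{5}/2)s$, so $h\le(\sqrt{5}/2)s\le(\sqrt{5}/2)M$. The inequality $(\sqrt{5}/2)M\le r(1-1/\sqrt{2})$ then holds precisely for $r\ge 3$ (at $r=3$ the left side is $\approx 0.754$ and the right side $\approx 0.879$). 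Your bound $2r/(r+1)$ does not use minimality at all---it constrains only the position of $\vvup$ in the ambient region---which is why it loses a factor and pushes the threshold out to $3+2\sqrt{2}$. No reflection and no tangency-condition optimisation are needed once minimality is used through Lemma~\ref{lessthanM}.
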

\begin{proof}
Suppose for contradiction that $r\geq 3$ and a minimal square is in \con17.  Then the side length, $s$, must be less than or equal to the side lengths of all squares in all other configurations, including \con10 which is addressed in Lemma~\ref{lessthanM}.  In particular, we must have $s\leq M$ from Lemma~\ref{lessthanM}.  Let $h$ be the distance from $L$ to the midpoint of the upper right side of the square. We know that $h$ can be no greater than $(\sqrt{5}/2)s$, because this is the highest that a midpoint of a side can possibly be, occurring if the midpoint is straight above~$\vvdn$.  In summary, $h\leq (\sqrt{5}/2)s \leq (\sqrt{5}/2)M$.  Therefore, $h\leq r-r/\sqrt{2}$ if
\[
\frac{\sqrt{5}}{2} \left( \frac{2r}{r+\sqrt{8}\sqrt{r}+1} \right) \leq r - \frac{r}{\sqrt{2}},
\]
which is true if $r\geq 3$.  However, Lemma~\ref{lem17a} then implies that the square is not minimal.  This gives a contradiction.
\end{proof}

\begin{definition}
For a family $\{J(t)\}_{t\in I}$ of lines or line segments, where $I$ is an interval in~$\R$, the \emph{pivot} at a given $t\in I$ is the point about which the line or line segment is pivoting at~$t$, if such a point exists.  In other words, for a given $t\in I$, if there exists $\epsilon>0$ such that $u\in I \cap (t-\epsilon, t+\epsilon)$ implies that $J(u) \cap J(t)$ consists of a single point, then the pivot, $P$, at~$t$ is defined by
\[
P = \lim_{u\to t}  J(u) \cap J(t)
\]
if the limit exists.  (The limit is one-sided if $t$ is an endpoint of~$I$.)
\end{definition}

\begin{lem}\label{pivot}
Let $L_\phi$ be the line through the origin having angle $\phi\in (0,\pi/2)$ with the positive $x$-axis as in Figure~\ref{jknotation}, define an interval $I=(\pi/2-\phi,\pi/2)$, and fix $\ell>0$.  Let $\{J(\beta)\}_{\beta\in I}$ be the family of line segments of length $\ell$ with endpoints on the $x$-axis and $L_\phi$, and for which $J(\beta)$ intersects the $x$-axis at angle~$\beta$ as in Figure~\ref{jknotation}.  Then the following hold.

\begin{figure}[h]
  \centering
  \includegraphics[width=0.7\linewidth]{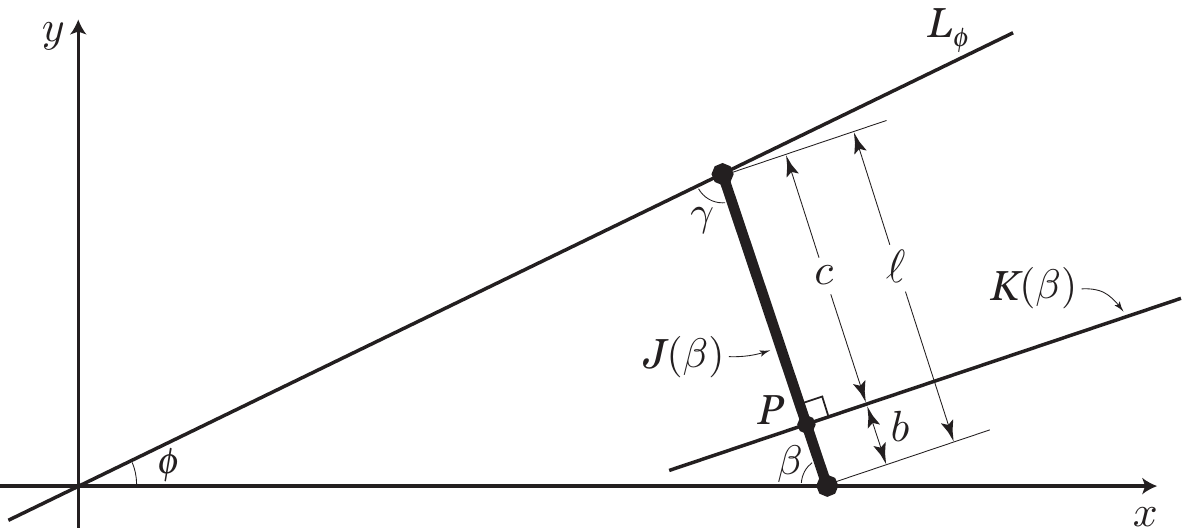}
  \caption{Notation for Lemma~\ref{pivot}.}
  \label{jknotation}
 \end{figure}
 
\begin{itemize}
\item[]
\begin{itemize}
\item[(i)]  For $\{J(\beta)\}_{\beta\in I}$, the position of the pivot $P$ at $\beta$ is determined by
\[
b\,(\cot\beta) = c\,(\cot\gamma),
\]
where $b$ and $c$ are the distances along $J(\beta)$ from $P$ to the $x$-axis and $L_\phi$, respectively, and $\gamma=\pi-\phi-\beta$ is the angle between $J(\beta)$ and $L_\phi$.
\item[(ii)]  Let $\{K(\beta)\}_{\beta\in I}$ be the family of lines such that $K(\beta)$ is the line perpendicular to $J(\beta)$ through $J(\beta)$'s pivot.  Then for $\{K(\beta)\}_{\beta\in I}$, the pivot at $\beta$ has $y$-coordinate
\[
\ell\left(\frac{\cos\gamma + \cos(\gamma-\beta)\cos\beta}{\sin(\gamma+\beta)}\right).
\]
\end{itemize}
\end{itemize}

\end{lem}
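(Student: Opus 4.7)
The plan is to set up the triangle $OAB$, where $O$ is the origin, $A=A(\beta)$ is the endpoint of $J(\beta)$ on the $x$-axis, and $B=B(\beta)$ is the endpoint on $L_\phi$. The interior angles at $O$, $A$, $B$ are $\phi$, $\beta$, $\gamma$, so the law of sines gives $|OA| = \ell\sin\gamma/\sin\phi$ and $|OB| = \ell\sin\beta/\sin\phi$, hence explicit coordinates for both endpoints as functions of $\beta$. Writing $\vec{u}(\beta) = (-\cos\beta,\sin\beta)$ for the unit vector along $J(\beta)$ from $A$ toward $B$, I would represent the point at signed distance $b$ from $A$ as $P(\beta) = A(\beta) + b\,\vec{u}(\beta)$ and impose the defining property of the pivot: with $b$ held fixed, $\partial P/\partial\beta$ must be parallel to $\vec{u}(\beta)$, since the pivot is the point that is instantaneously moving along $J(\beta)$ rather than across it.

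For part (i), this parallelism condition, combined with $d\gamma/d\beta = -1$ in differentiating $A(\beta)$, yields $b = \ell\cos\gamma\sin\beta/\sin\phi$ after a short computation. By symmetry (swapping $A$ and $B$, and $\beta$ and $\gamma$), the distance from $P$ to $L_\phi$ is $c = \ell\cos\beta\sin\gamma/\sin\phi$, and the identity $b + c = \ell\sin(\beta+\gamma)/\sin\phi = \ell$ confirms consistency. Then both $b\cot\beta$ and $c\cot\gamma$ collapse to $\ell\cos\beta\cos\gamma/\sin\phi$, establishing the claimed relation.

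For part (ii), the key observation I would exploit is that $P(\beta)$ is precisely the foot of the perpendicular from the instantaneous center of rotation $C(\beta)$ of the rigid segment $J(\beta)$ onto $J(\beta)$: this is standard for a rod whose endpoints slide on two curves. Because $A$ slides along the $x$-axis and $B$ slides along $L_\phi$, $C(\beta)$ is the intersection of the vertical line through $A$ with the line through $B$ perpendicular to $L_\phi$, which after a brief computation gives $C(\beta) = (\ell\sin\gamma/\sin\phi,\,\ell\cos\gamma/\sin\phi)$. Since $K(\beta)$ is by definition the perpendicular to $J(\beta)$ through the foot $P(\beta)$ of that perpendicular from $C(\beta)$, the line $K(\beta)$ must pass through $C(\beta)$ for every~$\beta$.

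With this in hand I would parameterize $K(\beta) = C(\beta) + s\,\vec{v}(\beta)$ using $\vec{v}(\beta) = (\sin\beta,\cos\beta)$, and again impose $\partial K/\partial\beta \parallel \vec{v}(\beta)$. Differentiation gives $C'(\beta) = (\ell/\sin\phi)(-\cos\gamma,\sin\gamma)$, and the cross-product condition collapses via $\cos\gamma\cos\beta + \sin\gamma\sin\beta = \cos(\gamma-\beta)$ to $s = \ell\cos(\gamma-\beta)/\sin\phi$. The $y$-coordinate of the $K$-pivot is then $C_y + s\cos\beta = (\ell/\sin\phi)[\cos\gamma + \cos(\gamma-\beta)\cos\beta]$, and rewriting $\sin\phi = \sin(\pi-\phi) = \sin(\gamma+\beta)$ in the denominator delivers the stated formula. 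The main obstacle to a clean proof is recognizing that $K(\beta)$ passes through the instantaneous center $C(\beta)$; without that identification one must differentiate $P(\beta)$ directly and wrestle with a substantial tangle of trigonometric terms before they simplify.
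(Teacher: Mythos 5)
Your proof is correct but takes a genuinely different route from the paper's, especially in part~(ii). For part~(i), the paper introduces an auxiliary family $\{H(\beta)\}$ of variable-length segments all passing through the fixed point $P$, uses the fact that the length of $H(\beta)$ has vanishing derivative at $\beta_0$ (since $H$ and $J$ share a pivot there), and extracts the relation from $B'(\beta_0)+C'(\beta_0)=0$ via basic trigonometry. You instead impose directly that the velocity $\partial P/\partial\beta$ of a point rigidly attached to the segment be parallel to the segment direction; both conditions characterize the pivot correctly, and your explicit formulas $b=\ell\cos\gamma\sin\beta/\sin\phi$ and $c=\ell\cos\beta\sin\gamma/\sin\phi$ agree with what one extracts from the paper's relation together with $b+c=\ell$.

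For part~(ii) the difference is more substantive. The paper writes down the Cartesian equation of $K(\beta)$, then for each fixed $y$ considers the $x$-coordinate $f_y(\beta)$ of the intersection of $K(\beta)$ with the horizontal line at height $y$, and solves $f_y'(\beta_0)=0$ — an envelope-type condition carried out by brute-force differentiation. You instead make the kinematic observation that the instantaneous center of rotation $C(\beta)$ of the rigid rod $J(\beta)$ (intersection of the normals to the two constraint curves at the endpoints) always lies on $K(\beta)$, because the pivot $P(\beta)$ is precisely the foot of the perpendicular from $C(\beta)$ to $J(\beta)$. This gives the clean parameterization $K(\beta)=C(\beta)+s\vec v(\beta)$ with $C(\beta)=\bigl(\ell\sin\gamma/\sin\phi,\ \ell\cos\gamma/\sin\phi\bigr)$, after which the same velocity-parallel condition yields $s=\ell\cos(\gamma-\beta)/\sin\phi$ and the stated $y$-coordinate with little algebraic effort. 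Your argument is more geometric and shorter than the paper's; the paper's is more self-contained (it never appeals to the standard fact about instantaneous centers of sliding rods), which may have been the authors' motivation. Both are valid.
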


\begin{proof}
Given such $L_\phi$, $\ell$, $\{J(\beta)\}_{\beta\in I}$, and $\{K(\beta)\}_{\beta\in I}$, fix a value $\beta_0\in I$.

To prove (i), let $P$ be the pivot at $\beta_0$ for $\{J(\beta)\}_{\beta\in I}$, and let $b$ and $c$ be the distances along $J(\beta_0)$ from $P$ to the $x$-axis and $L_\phi$, respectively.  Define a family $\{H(\beta)\}_{\beta\in(0,\pi/2)}$ of variable-length line segments as those for which $H(\beta)$ is the line segment through $P$ with endpoints on the $x$-axis and $L_\phi$, and that intersects the $x$-axis at angle $\beta$; e.g.,\ $H(\beta_0)=J(\beta_0)$.  It follows that $d(\text{length of $H(\beta)$})/d\beta = 0$ at $\beta=\beta_0$ because $H(\beta)$ and $J(\beta)$ have the same pivot at~$\beta_0$, so the instantaneous rates of change of location of their respective endpoints (on the $x$-axis and~$L_\phi$) are the same.  Let $B(\beta)$ and $C(\beta)$ be the distances along $H(\beta)$ from $P$ to the $x$-axis and $L_\phi$, respectively, so $B'(\beta_0)+C'(\beta_0)=0$.  Basic trigonometry shows that $B(\beta)=h_B/\sin\beta$ and $C(\beta)=h_C/\sin\gamma$, where $\gamma=\pi-\phi-\beta$, and $h_B$ and $h_C$ are the (shortest) distances from $P$ to the $x$-axis and $L_\phi$, respectively.  Putting all of this together, (i)~follows because $b=B(\beta_0)$ and $c=C(\beta_0)$.

For (ii), we note that $K(\beta)$ is the line
\begin{align*}
y &= \cot\beta\left(x -\left(\frac{\ell\sin\gamma}{\sin\phi} - b\cos\beta\right)\right) + b\sin\beta, \\
\text{i.e.,\ \ \ \ } y &= (\cot\beta)x - \ell\left(\frac{\sin(\phi + 2\beta)}{\sin\phi \sin\beta}\right),
\end{align*}
where the second form uses $\gamma = \pi-\phi-\beta$, as well as a substitution for $b$ stemming from (i) which gives $b (\cot\beta) = (\ell-b) (\cot\gamma)$.

For all $\beta$, the slope of $K(\beta)$ is greater than zero, so the pivot for $\{K(\beta)\}_{\beta\in I}$ at $\beta_0$ can be found as follows.  For any given~$y$, define the function $f_y(\beta)$ giving the value of $x$ such that $(x,y)$ is on the line $K(\beta)$:
\[
f_y(\beta) = (\tan\beta)y + \ell\left(\frac{\sin(\phi+2\beta)}{\sin\phi\cos\beta}\right).
\]
Then the pivot at $\beta_0$ must have $y$-coordinate such that $f_y'(\beta_0)=0$.  Solving this equation for $y$ gives~(ii).
\end{proof}

\begin{lem}\label{lem8917}
A minimal square cannot be in \con8, \con9, or \con17.
\end{lem}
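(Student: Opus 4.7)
My plan is to prove the stronger claim that a minimal square cannot lie in Con8\textsuperscript{+}; by the preceding Remark this covers Con8, Con9, and Con17, the last through the $r\leftrightarrow 1/r$ inversion. Recall that Con7 and Con10, though technically inside Con8\textsuperscript{+}, are already excluded by Lemma \ref{lem71011}, so the substance of the argument concerns the transition range containing Con8 and Con9.

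Throughout Con8\textsuperscript{+} each of $V_{\text{dn}}$, $V_1$, $V_r$ contacts its target at a single point, so $T_L$ is vertical through $V_{\text{dn}}$ while $T_1$ and $T_r$ lie along the radii $O_1V_1$, $O_rV_r$. By Lemma \ref{triangle}, at any interior local minimum these three lines must be concurrent, since otherwise $s(\aang)$ is strictly monotonic nearby. The approach, modeled on the proof of Lemma \ref{lem15}, is to use Lemma \ref{pivot} to track how the heights of $T_1\cap T_L$ and $T_r\cap T_L$ evolve as $\aang$ varies. I would apply the pivot lemma to the family of lower-right sides of the square (and symmetrically to the upper-left sides), with $L$ playing the role of the $x$-axis and $L_\phi$ chosen so that one endpoint of the pivoting segment sweeps along it; part (ii) then yields closed-form pivot heights, and hence first-order information about $T_1\cap T_L$ and $T_r\cap T_L$. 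From this one identifies any candidate critical angle $\aang^*$ at which the three perpendiculars concur, together with the corresponding side length $s(\aang^*)$.

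The closing step is to compare $s(\aang^*)$ with the bound $M = 2r/(r+\sqrt{8}\sqrt{r}+1)$ of Lemma \ref{lessthanM}. I expect to show $s(\aang^*) > M$ whenever $r > 1$, so that Con10 supplies a strictly smaller inscribed square and minimality fails; the boundary case $r = 1$ must be handled separately, perhaps by pairing with the reflection symmetry used in Lemma \ref{lem19}. The main obstacle, I anticipate, is the algebraic comparison between the trigonometric output of Lemma \ref{pivot}(ii) and $M$, compounded by the need to extend the analysis to $r \leq 1$ for the Con17 reduction. Lemma \ref{lem17a} and Corollary \ref{cor17a}, which already dispose of Con17 for $r \geq 3$, presumably plug in to cover the regime where the pivot comparison alone is too loose.
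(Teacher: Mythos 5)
Your skeleton --- reduce to \con8$^+$ via the $r\leftrightarrow 1/r$ inversion, observe via Lemma~\ref{triangle} that an interior minimum forces $\tta$, $\ttb$, $\ttl$ to concur, and invoke Lemma~\ref{pivot} together with Lemma~\ref{lessthanM} and Corollary~\ref{cor17a} --- matches the paper's, but the step that actually closes the argument is absent, and the inequality you propose to prove is not the one the paper needs.

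You plan to extract a closed form for the critical angle $\aang^*$ and its side length $s(\aang^*)$ and then show $s(\aang^*) > M$. Neither quantity is ever computed in the paper, and there is no evident route to a closed form: Lemma~\ref{pivot}(ii) returns the height of the pivot of the family of perpendiculars $K(\beta)$, not a side length, and $\aang^*$ is determined by a transcendental condition. The paper sidesteps this entirely. It fixes the hypothesized minimal side length $\ell$, considers the family $G(\beta)$ of segments of length $\ell$ from $L$ to $\cca$, and tracks the distance $m(\beta)$ from $\oob$ to $G(\beta)$; minimality forces $m'(\aalftz)=0$ and hence $h(\aalftz)=0$ for the signed vertical distance $h$ from $\oob$ to the perpendicular $H(\beta)$. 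The pivot $P$ of $\{H(\beta)\}$ is then bounded via Lemma~\ref{pivot}(ii) --- this is where the trigonometric maximization $f(v,w)\le\sqrt 2$ lives --- to get $\yp \le \ell\sqrt 2 \le \sqrt 2\, M$, and the inequality that does the work is $\yp < r$, obtained by combining this with $r>1/3$ from Corollary~\ref{cor17a}. Since $H(\aalftz)$ has positive slope, $\yp<r$ places $P$ below and to the \emph{left} of $\oob$, which forces $h'(\aalftz)<0$, hence $m'<0$ just past $\aalftz$, contradicting the minimality of $m(\aalftz)$. So the quantitative target is a bound on the pivot's height relative to $r$, not a comparison of $s(\aang^*)$ with $M$; your proposal has no mechanism to produce it, and the modeling on Lemma~\ref{lem15} (tracking the two intersections separately) does not transfer because here the contacts are corner-on-circle rather than side-tangent-to-circle.

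Two smaller points: the separate treatment of $r=1$ you anticipate is not needed --- the paper folds the \con9 endpoint into the main argument via the case $\aalftz=\aalft_1$ --- and after the inversion the analysis must be carried out uniformly over all $r>1/3$, not just $r>1$, which your sketch does not address.
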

\begin{proof}
Suppose for contradiction that a minimal square exists in \con8$^+$.  Noting that the extended version of \con8 with $r=r_0\le 1$ is equivalent to \con17 with $r=1/r_0$, we may apply Corollary~\ref{cor17a}, concluding that $r>1/3$.

Let $I=[\aalft_1,\aalft_2]$ be the set of angles between $L$ and the lower left sides of the squares in \con8$^+$, and let $\aalftz\in I$ be the maximum such angle of a minimal square in \con8$^+$.  Then we know that $\aalftz\neq\aalft_2$, and that $\aalftz\neq\aalft_1$ unless $r=1$, because a minimal square cannot be in \con7, \con10, \con16, or \con18.  Let $\ell$ be the side length of the minimal square, and let $\{G(\aalft)\}_{\aalft\in I}$ be the family of line segments of length $\ell$ with endpoints on $L$ and $C_1$, for which $G(\aalft)$ intersects $L$ at angle~$\aalft$.  Let $m(\aalft)$ be the distance from $\oob$ to $G(\aalft)$.  Then $m'(\aalftz)=0$ because either $m(\aalftz)$ is a minimal value on an open interval, or the square for $\aalftz$ is in \con9 so the shortest distance from $\oob$ to $G(\aalftz)$ is the distance from $\oob$ to $G(\aalftz)$'s endpoint at $L$, which is the pivot for $\{G(\aalft)\}_{\aalft\in I}$ at~$\aalftz$.
 
Let $\{H(\aalft)\}_{\aalft\in I}$ be the family of lines such that $H(\aalft)$ is perpendicular to $G(\aalft)$ through $G(\aalft)$'s pivot.  Let $h(\aalft)$ be the vertical distance from $\oob$ to~$H(\aalft)$, but considered negative if $H(\aalft)$ is below~$\oob$.  The sign of $m'(\aalft)$ is the same as the sign of~$h(\aalft)$, because $h(\aalft)>0$ means that $G(\aalft)$ is moving away from~$\oob$ as $\aalft$ increases, and vice versa.  Therefore, $h(\aalftz)=0$.

Let $L_0$ be the line tangent to $\cca$ at $G(\aalftz)$'s endpoint on~$\cca$.  It follows from this tangency of $L_0$ and~$\cca$ that the pivot at $\aalftz$ for $\{G(\aalft)\}_{\aalft\in I}$ is the same as the pivot at $\aalftz$ for the family $\{J(\aalft)\}_{\aalft\in I}$ of line segments of length $\ell$ with endpoints on $L$ and $L_0$ with $J(\aalft)$ intersecting $L$ at angle~$\aalft$.  Intuitively, this match of the pivot is because $J(\aalft)$ acts like $G(\aalft)$ near~$\aalftz$, and can be seen formally by standard $\epsilon$-$\delta$ reasoning.  It also follows that the pivot $P$ at $\aalftz$ for $\{H(\aalft)\}_{\aalft\in I}$ is the same as the pivot at $\aalftz$ for the family $\{K(\aalft)\}_{\aalft\in I}$ of lines such that $K(\aalft)$ is perpendicular to $J(\aalft)$ through $J(\aalft)$'s pivot.

We now apply Lemma~\ref{pivot}(ii) to $\{K(\aalft)\}_{\aalft\in I}$ in order to show that the distance from $L$ to $P$ is less than~$r$.  This distance, the ``$y$-coordinate'' of~$P$, is thus given by
\[
\yp = \ell\left(\frac{\cos\gamma + \cos(\gamma-\aalft)\cos\aalft}{\sin(\gamma+\aalft)}\right),
\]
where $\gamma$ is the angle that $J(\aalft)$ makes with~$L_0$.  We claim that $\yp\leq \ell\sqrt{2}$, and begin the proof by noting that for angle $\phi$ between $L$ and $L_0$ we have $\phi+\gamma+\aalft=\pi$, along with $\phi,\aalft\in(0,\pi/2)$, $\gamma\in(0,\pi/2]$, and $\gamma+\aalft\in(\pi/2,\pi)$.  Let $v=(\gamma+\aalft)/2$ and $w=(\gamma-\aalft)/2$, and let
\[
f(v,w) = \ell\left(\frac{\cos(v+w) + \cos(2w)\cos(v-w)}{\sin(2v)}\right)
\]
so that $\yp\leq\ell\sqrt{2}$ can be proved by showing that $f(v,w)\leq\sqrt{2}$ on the domain defined by $v+w\leq\pi/2$, $v-w<\pi/2$, and $v\geq\pi/4$.  Although $v$ cannot take the value $\pi/4$ in the geometric interpretation, this extension of the domain of $f$ to include $v=\pi/4$ facilitates phrasing in the proof of a bound.

Straightforward computations show that
\[
\frac{\partial f}{\partial v} = \frac{-\cos^3\aalft - 3\cos\gamma\cos\aalft}{\sin^2(\gamma+\aalft)} <0
\]
on the domain, so the maximum occurs on the border $v=\pi/4$.  Let
\[
g(w) = f\left(\frac{\pi}{4},w\right) = \cos\left(w+\frac{\pi}{4}\right) + \cos(2w)\cos\left(w-\frac{\pi}{4}\right).
\]
The maximum value of $g(w)$ occurs at $w=0$, because
\[
g'(w) = \sqrt{8}\left(\cos^3 w - \cos w - \cos^2 w \sin w\right)
\]
and $w\in (-\pi/4,\pi/4)$ on the domain, so $g(w)$ increases then decreases, and moreover, $g'(w)=0$ only at $w=0$.  Therefore, we have $f(v,w)\leq f(\pi/4,0) = \sqrt{2}$ for all $(v,w)$ in the domain of~$f$, proving that $\yp\leq\ell\sqrt{2}$.

To complete the proof that the distance from $L$ to $P$ is less than~$r$, we note that
 because $\ell$ is the minimum side length of a square, Lemma~\ref{lessthanM} implies that
\[
\yp\leq\ell\sqrt{2}\leq \left(\frac{2r}{r+\sqrt{8}\sqrt{r}+1}\right)\sqrt{2}.
\]
Since we know from Corollary~\ref{cor17a} that $r>1/3$, the fact that $\yp<r$ easily follows.

Now, because $H(\aalftz)$ has positive slope, and pivot $P$ on $H(\aalftz)$ is at a distance less than $r$ from~$L$, $P$~is below and to the left of~$\oob$.  Recalling that $h(\aalft)$ is the (signed) vertical distance from $\oob$ to~$H(\aalft)$, the fact that pivot $P$ is to the left of $\oob$ implies that $h'(\aalftz)<0$.  We already know that $h(\aalftz)=0$, so there exists $\epsilon>0$ such that $\aalft\in (\aalftz,\aalftz+\epsilon)$ implies $h(\aalft)<0$.  However, since $h(\aalft)$ and $m'(\aalft)$ have the same sign, this means $m'(\aalft)<0$ on $(\aalftz,\aalftz+\epsilon)$, contradicting $m(\aalftz)$ being minimal.
\end{proof}

\begin{prop}\label{prop6only}
A minimal square occurs only in \con6, and possibly in \con19, and must have $\vva$ lower than~$\ooa$.  \con19 has a minimal square if and only if $r=1$.
\end{prop}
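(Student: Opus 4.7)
The plan is essentially bookkeeping: every one of the nineteen configurations of Figure~\ref{configurations} has been individually addressed in the preceding lemmas, so the proof of Proposition~\ref{prop6only} amounts to assembling those results and handling the symmetry in the special case $r=1$.

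First, I would note that the existence of a minimal inscribed square is guaranteed by part~(ii) of the first lemma of Section~\ref{parametrization_section}, and that the lemma immediately following Figure~\ref{configurations} certifies that every inscribed square falls into one of the configurations \con1 through \con19. The exclusion lemmas then remove all configurations but one: Lemma~\ref{lem123} kills \con1, \con2, \con3; Lemma~\ref{lem456} kills \con4, \con5 and also the sub-case of \con6 in which $\vva$ lies at or above $\ooa$; Lemma~\ref{lem71011} kills \con7, \con10, \con11; Lemma~\ref{lem121314} kills \con12, \con13, \con14; Lemma~\ref{lem15} kills \con15; Lemma~\ref{lem1618} kills \con16, \con18; and Lemma~\ref{lem8917} kills \con8, \con9, \con17. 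What remains is exactly \con6 with $\vva$ strictly below $\ooa$, together with \con19.

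It then only remains to handle \con19. If $r\neq 1$, Lemma~\ref{lem19} forbids a minimal square in \con19, so the minimal square must lie in \con6. If $r=1$, however, the circles $\cca$ and $\ccb$ are congruent, and the vertical reflection of the plane through the midpoint of the segment $\ooa\oob$ carries \con6 to \con19 while preserving side length. Hence in this case a minimal square in \con6 reflects to a minimal square in \con19, and vice versa, so \con19 contains a minimal square precisely when $r=1$. The only real obstacle here is confirming that the case analysis is exhaustive, and that is exactly what the lemma following Figure~\ref{configurations} provides; no additional geometric argument is required.
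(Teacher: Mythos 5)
Your proposal is correct and is essentially the paper's own proof: the paper simply states that the proposition follows from the exclusion lemmas together with the observation that \con19 is the reflection of \con6. You have spelled out the bookkeeping in more detail (which lemma eliminates which configuration, and why $r=1$ gives a minimal square in \con19 by reflection symmetry), but the structure and substance of the argument are the same.
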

\begin{proof}
This result follows from the lemmas ruling out all other configurations, and the fact that \con19 is the reflection of \con6.
\end{proof}

\section{Equations for minimum side length.}\label{polynomial_section} 
In this section we derive equations toward the pursuit of the minimum side length of an inscribed square, knowing from Proposition~\ref{prop6only} that this minimum occurs in \con6 and has $\vva$ lower than~$\ooa$.  We continue using here the notation introduced at the beginnings of Sections 2 and~3.

\begin{lem}\label{lem6monotonic}
In \con6 with $\vva$ lower than~$\ooa$, $s(\theta)$ decreases then increases, both strictly monotonically, as a function of~$\aang$.  Thus, there is a unique minimal square in~\con6.
\end{lem}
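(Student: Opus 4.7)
The plan is to use Lemma~\ref{triangle} to translate the monotonicity question into one about the three perpendiculars $\tta$, $\ttb$, $\ttl$. Let $\Theta=(\theta_-,\theta_+)\subseteq[0,\pi/2)$ be the open interval of angles corresponding to \con6 configurations with $\vva$ strictly below $\ooa$. By Lemma~\ref{triangle}, $\sside'(\theta)<0$ (resp.\ $>0$) if and only if $\tta\cap\ttl$ lies above (resp.\ below) $\ttb$. I aim to show $\sside'<0$ near $\theta_-$, $\sside'>0$ near $\theta_+$, and $\sside'$ vanishes at a unique interior point of $\Theta$.

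For the left endpoint, at $\theta_-$ the vertex $\vva$ has the same height as $\ooa$, and the geometric argument of Lemma~\ref{lem456} extends by continuity to place $\tta\cap\ttl$ strictly above $\ttb$, giving $\sside'<0$ just inside $\Theta$. For the right endpoint, at $\theta_+$ the square lies on the boundary between \con6 and one of \con7, \con11, or \con12 (depending on $r$). By Lemmas~\ref{lem71011} and~\ref{lem121314}, $\sside$ is strictly increasing in the adjacent configuration. A direct inspection of the perpendicular configuration at $\theta_+$ from the \con6 side, analogous to the endpoint arguments already given in this section, then shows $\tta\cap\ttl$ lies below $\ttb$, so $\sside'>0$ just inside $\theta_+$.

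The crux is uniqueness of the critical point. Place coordinates so that $\lli$ is the $x$-axis, $\ooa=(0,1)$, and $\oob=(2\sqrt{r},r)$, and write $\vvdn=(p,0)$, $\vva=(p-\sside\sin\theta,\sside\cos\theta)$, $\vvb=(p+\sside\cos\theta,\sside\sin\theta)$. The two circle-incidence conditions determine $p$ and $\sside$ as smooth functions of $\theta$ by the implicit function theorem, and the concurrency $\tta\cap\ttl\in\ttb$ becomes a single algebraic equation in $\theta$. I would establish uniqueness in one of two ways: either (a) directly, by using the explicit incidence equations to reduce the concurrency equation to a low-degree polynomial with a controlled root count on the relevant interval, or (b) in the spirit of Lemma~\ref{lem15}, by parametrizing inscribed \emph{rectangles} (with $\vvdn\in\lli$, $\vva\in\cca$, $\vvb\in\ccb$, but arbitrary aspect ratio) by two parameters, defining the two side lengths $f$ and $g$, and showing via partial-derivative sign analysis that the locus $\{f=g\}$ is a strictly monotone curve in the parameter plane.

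The main obstacle is this uniqueness step. The hypothesis $\vva<\ooa$ is precisely the sign regime in which both decreasing and increasing behavior occur (Lemma~\ref{lem456} already handles the complementary regime), but the algebra of the concurrency condition remains intricate, and the rectangle-deformation approach demands careful sign bookkeeping across several partial derivatives. I anticipate needing implicit differentiation of the constraint system together with the geometric inequality $\vva<\ooa$ to pin down the critical sign. Once uniqueness is in hand, the endpoint signs force $\sside$ to decrease strictly on $(\theta_-,\theta^*)$ and increase strictly on $(\theta^*,\theta_+)$, establishing both the strict-monotonicity claim and the uniqueness of the minimal square in \con6.
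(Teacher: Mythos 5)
You have identified the correct framework --- convert $s'(\theta)$ into the position of $\tta\cap\ttl$ relative to $\ttb$ via Lemma~\ref{triangle}, then establish that the sign changes exactly once --- but your proof stops short at precisely the step you yourself flag as ``the crux.'' Neither of your two proposed routes for uniqueness (reducing the concurrency condition to a controlled-root-count polynomial, or the rectangle deformation in the style of Lemma~\ref{lem15}) is carried out, so as written there is a genuine gap: the lemma is not established.

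What the paper does instead is shorter and avoids any algebraic bookkeeping. It observes that as $\theta$ increases in \con6 with $\vva$ below $\ooa$, two things happen monotonically at once: $\ttl$ moves right while $\vva$ moves down (so the intersection of the negatively sloped $\tta$ with the vertical $\ttl$ moves \emph{strictly downward}), and $\vvb$ moves up (so $\ttb\cap\ttl$ moves \emph{strictly upward}). Two points moving strictly monotonically in opposite directions along $\ttl$ coincide at exactly one $\theta$; the triangle of Lemma~\ref{triangle} therefore flips from one side of $\ttl$ to the other exactly once, and $s(\theta)$ strictly decreases then strictly increases. This gives uniqueness for free --- no implicit function theorem, no concurrency polynomial, no rectangle parametrization --- and it also makes your separate endpoint-sign arguments unnecessary, since the one monotone-crossing observation delivers both the endpoint behavior and the uniqueness simultaneously. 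The rectangle-parametrization machinery you cite from Lemma~\ref{lem15} is the heavier tool that the paper reserves for \con15, where this cleaner opposite-monotone-motion argument is not available; in \con6 it would work but is overkill. If you pursue route (b), you must also explain why the partial-derivative signs you need (analogues of (1)--(4) in Lemma~\ref{lem15}) actually hold in the \con6 geometry, which is not automatic: the geometric configuration there is different (the contact points are vertices $\vva,\vvb$, not tangent sides), so the similar-triangle and pivot arguments of Lemma~\ref{lem15} do not transfer verbatim. In short, you should replace the speculative uniqueness step with the direct observation that $\tta\cap\ttl$ and $\ttb\cap\ttl$ move monotonically in opposite directions along $\ttl$.
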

\begin{proof}
As $\theta$ increases in \con6 with $\vva$ lower than~$\ooa$, $\ttl$~moves to the right and $\vva$ moves down, so the intersection between $\tta$ (which has negative slope) and $\ttl$ moves down.  Simultaneously, $\vvb$ moves up, so the intersection between $\ttb$ and $\ttl$ moves up.  Therefore, as $\theta$ increases, the triangle formed by $\ttl$, $\tta$, and $\ttb$ switches from the right side of $\ttl$ to the left side of $\ttl$ at a unique value, when $\tta$ and $\ttb$ intersect $\ttl$ at the same point.  By Lemma~\ref{triangle}, $s(\theta)$~decreases then increases, both strictly monotonically.

The uniqueness of the minimal square in \con6 follows from this and Proposition~\ref{prop6only}.
\end{proof}

For the equations used in seeking the minimum side length of an inscribed square, the parameter representing orientation will be the distance, $\xt$, from the line $\lli$ to the upper left vertex $\vva$ of the square, as shown in Figure~\ref{equations}.  It is not difficult to see that $\xt$ is related strictly monotonically to~$\theta$.

\begin{figure}[ht]
  \centering
  \includegraphics[width=\linewidth]{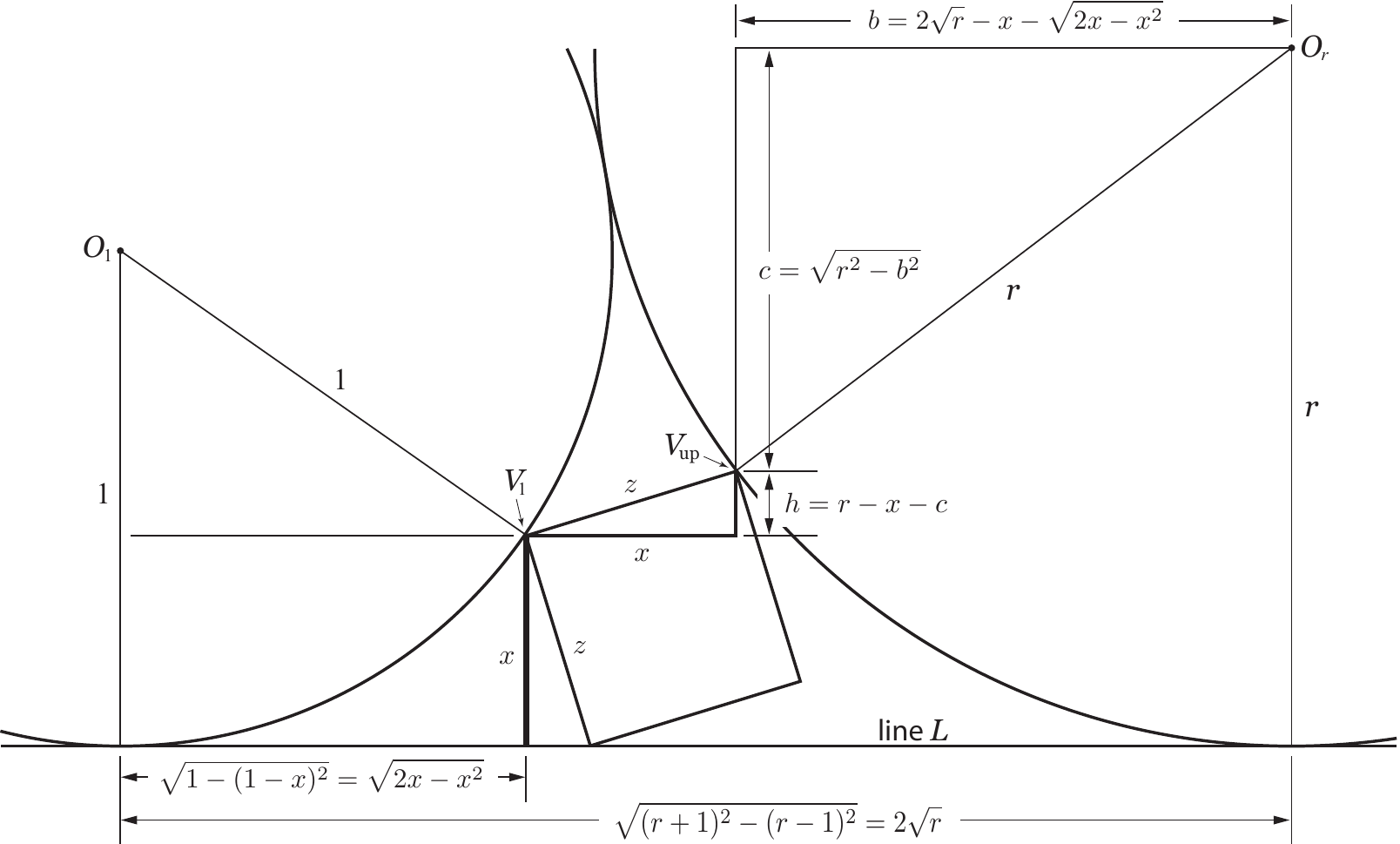}
  \caption{Equations used to compute the side length of a square in \con6, given distance $\xt$ from the line $\lli$ to the upper left vertex $\vva$ of the square.  Here the side length is denoted~$z$.}
  \label{equations}
\end{figure}

\begin{prop}\label{minsidelength}
The minimum side length, $\mu(r)$, of an inscribed square is the minimum value of the function
\begin{equation}\label{side_length_equation}
z(\xt) = \left[\xt^2 + \left(r - \xt - \sqrt{r^2 - \left(2\sqrt{r} - \xt - \sqrt{2\xt - \xt^2}\right)^2}\right)^2 \right]^\frac{1}{2}
\end{equation}
on the interval $(1-1/\sqrt{2},1)$.  The minimum occurs at a unique point, denoted~$\xt_m$.  Moreover, the function $z$ is strictly decreasing for $\xt<\xt_m$ and strictly increasing for $\xt>\xt_m$.
\end{prop}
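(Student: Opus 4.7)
The plan is to reduce to \con6 via Proposition~\ref{prop6only}, derive \eqref{side_length_equation} from coordinate geometry, and then apply Lemma~\ref{lem6monotonic} after checking that the parameter $\xt$ is a monotone reparametrization of $\aang$.

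Place $\lli$ as the $x$-axis with the tangency of $\cca$ at the origin, so $\ooa=(0,1)$ and $\oob=(2\sqrt{r},r)$ since the centers are $\sqrt{(1+r)^2-(r-1)^2}=2\sqrt{r}$ apart horizontally. By Proposition~\ref{prop6only} it suffices to consider \con6 with $\vva$ below $\ooa$ (the reflected \con19 gives the same minimum when $r=1$). Using $\xt$ (the height of $\vva$) as parameter, $\vva\in\cca$ forces $\vva=(\sqrt{2\xt-\xt^2},\xt)$; writing $z$ for the side length and $y=z\sin\aang$ so that $\xt=z\cos\aang$, the other vertices work out to
\[
\vvdn=(\sqrt{2\xt-\xt^2}+y,\,0),\quad
\vvb=(\sqrt{2\xt-\xt^2}+y+\xt,\,y),\quad
\vvup=(\sqrt{2\xt-\xt^2}+\xt,\,\xt+y).
\]
In \con6 the vertex on $\ccb$ is $\vvup$, lying below $\oob$; setting $A=2\sqrt{r}-\xt-\sqrt{2\xt-\xt^2}$, the condition $\vvup\in\ccb$ becomes $\xt+y=r-\sqrt{r^2-A^2}$, and then $z^2=\xt^2+y^2$ recovers \eqref{side_length_equation}.

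For the interval, the upper endpoint $\xt=1$ is exactly ``$\vva$ below $\ooa$'' from Proposition~\ref{prop6only}, and at the lower endpoint $\xt=1-1/\sqrt{2}$ one has $\sqrt{2\xt-\xt^2}=1-\xt$. Since $\xt+\sqrt{2\xt-\xt^2}$ is strictly increasing from $1$ to $2$ on $(1-1/\sqrt{2},1)$ and $2\sqrt{r}\ge 2$, we get $0<A<r$ throughout, and a direct check places the geometric \con6 window inside the interval. Differentiating, $dA/d\xt=-1-(1-\xt)/\sqrt{2\xt-\xt^2}<-1$, so $A$ strictly decreases and $dy/d\xt<-1$. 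The key inequality $\xt\cdot dy/d\xt<y$ simplifies to $\xt\,(dA/d\xt)(r+\sqrt{r^2-A^2})<A\sqrt{r^2-A^2}$, whose left side is negative while the right is positive; hence $\tan\aang=y/\xt$ strictly decreases in $\xt$, and $\xt\mapsto\aang$ is a strictly monotonic reparametrization. Lemma~\ref{lem6monotonic} then makes $z(\xt)=\sside(\aang(\xt))$ strictly decreasing and then strictly increasing on the \con6 window with a unique minimum $\xt_m$, and Proposition~\ref{prop6only} identifies $z(\xt_m)$ with $\mu(r)$. The formula extends smoothly to all of $(1-1/\sqrt{2},1)$ without new critical points, so the infimum over the full interval also equals $\mu(r)$.

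The main obstacle is verifying both the strict monotonicity of the reparametrization $\xt\mapsto\aang$ on the entire interval $(1-1/\sqrt{2},1)$ and the absence of extraneous critical points of $z$ outside the geometric \con6 window; the differential inequality above handles the first point and also forces the extended analytic $z$ to remain monotone on the full interval, so that Lemma~\ref{lem6monotonic} together with Proposition~\ref{prop6only} closes the argument.
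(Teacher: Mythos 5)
Your coordinate derivation of equation \eqref{side_length_equation} is correct (and more explicit than the paper's, which invokes congruent triangles via a figure), and your differential inequality establishing that $\xt\mapsto\aang$ is a strictly monotone reparametrization is a worthwhile verification of something the paper dispatches with ``it is not difficult to see.'' The reduction to \con6 via Proposition~\ref{prop6only} and the use of Lemma~\ref{lem6monotonic} on the \con6 window $(\xt_1,\xt_2)$ also match the paper.

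However, the final step has a genuine gap. You correctly identify the issue --- ``the absence of extraneous critical points of $z$ outside the geometric \con6 window'' --- but then assert that ``the differential inequality above\ldots also forces the extended analytic $z$ to remain monotone on the full interval.'' This is a non sequitur. The inequality $\xt\cdot dy/d\xt<y$ shows $\tan\aang=y/\xt$ is decreasing; it says nothing about $z=\sqrt{\xt^2+y^2}$, and indeed $dz^2/d\xt = 2\xt+2y\,dy/d\xt$ has no determined sign from what you've shown. Worse, Lemma~\ref{lem6monotonic} concerns $\sside(\aang)$, the side length of the \emph{inscribed} square in \con6; but for $\xt\in(1-1/\sqrt{2},\xt_1)\cup(\xt_2,1)$, equation~\eqref{side_length_equation} gives the side length of a square whose three consecutive vertices lie on $\cca$, $\lli$, $\ccb$ but which is not inscribed (it crosses one of the circles), so $z(\xt)\ne\sside(\aang(\xt))$ there and the reparametrization argument cannot be applied. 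The paper closes exactly this gap by introducing those non-inscribed squares explicitly (its Figures~\ref{smallx} and~\ref{largex}) and proving, via a modified form of Lemma~\ref{triangle} adapted to three-vertex contact, that $z$ is strictly decreasing on $(1-1/\sqrt{2},\xt_1)$ and strictly increasing on $(\xt_2,1)$. You need an argument of that kind (or a direct sign analysis of $dz/d\xt$) on those two outer subintervals before the conclusion follows.
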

\begin{proof}
Let $(\xt_1,\xt_2)$ be the interval of values of the distance $\xt$ from $\lli$ to $\vva$ such that the associated square is in \con6 with $\vva$ lower than~$\ooa$. We know from Proposition~\ref{prop6only} that the minimum side length corresponds to some $\xt\in (\xt_1,\xt_2)$.

If $\xt\in (\xt_1,\xt_2)$, then as illustrated in Figure~\ref{equations}, congruent triangles using $\xt$ and $z$ show that  $\vvup$ is at a horizontal distance $\xt$ from~$\vva$.  Therefore, for $\xt\in(\xt_1,\xt_2)$,
\[
z = \left(\xt^2+h^2 \right)^\frac{1}{2},
\]
where $h$ is the vertical distance from $\vva$ to~$\vvup$.  Thus by the equations in Figure~\ref{equations}, the side length is given as a function of $\xt\in(\xt_1,\xt_2)$ by $z(\xt)$ in equation~\eqref{side_length_equation}.  In addition, it follows from Lemma~\ref{lem6monotonic} that on $(\xt_1,\xt_2)$ the minimum occurs at a unique point, $\xt_m$, and that $z$ is strictly decreasing for $\xt<\xt_m$ and strictly increasing for $\xt>\xt_m$.

However, the stated domain $(1-1/\sqrt{2},1)$ extends beyond $(\xt_1,\xt_2)$; certainly, $\xt_2<1$ by definition.  Toward the claim that $\xt_1> 1-1/\sqrt{2}$, we suppose for contradiction that $\xt=1-1/\sqrt{2}$ is possible.  Because $\vva$ is distance $x$ from~$\lli$, $\xt=1-1/\sqrt{2}$ occurs exactly when the line through $\ooa$ and $\vva$ has slope~$-1$.  In that case, $1-1/\sqrt{2}$ is also the horizontal distance from $\vva$ to a vertical line tangent to $\cca$ on the right side.  Because $\vvup$ is horizontal distance $x$ ($=1-1/\sqrt{2}$) from $\vva$, $\vvup$ is on that vertical line.  This contradicts the fact that $\vvup$ is on $\ccb$ which requires $\vvup$ to be to the right of that vertical line.  Noting that $x< 1-1/\sqrt{2}$ would result in a similar contradiction, we conclude that $\xt_1> 1-1/\sqrt{2}$.

Now considering $z$ in equation~\eqref{side_length_equation} as a function of $\xt\in (1-1/\sqrt{2},1)$, we know on $(\xt_1,\xt_2)$ that $z$ represents the side length of an inscribed square, but outside $(\xt_1,\xt_2)$ we only have equation~\eqref{side_length_equation}.  It is helpful to set up geometric interpretations for the cases $\xt\in (1-1/\sqrt{2},\xt_1)$ and $\xt\in (\xt_2,1)$ in order to complete the proof by showing that $z$ strictly decreases when $\xt\in (1-1/\sqrt{2},\xt_1)$ and strictly increases when $\xt\in (\xt_2,1)$.

\begin{figure}[h]
  \centering
  \includegraphics[width=\linewidth]{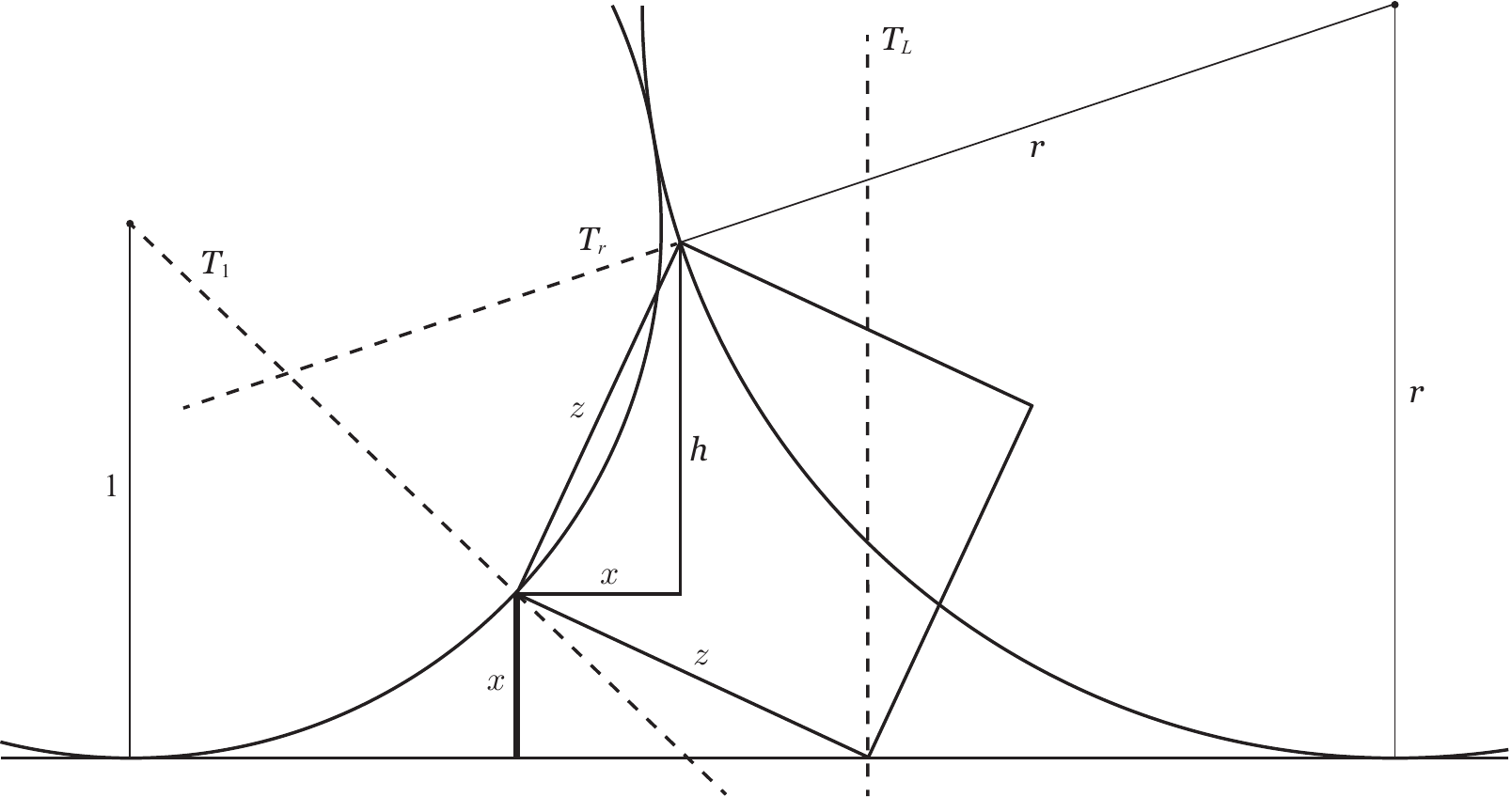}
  \caption{Geometry showing that $z$ given by \eqref{side_length_equation} is not a minimum when $\xt<\xt_1$.}
  \label{smallx}
\end{figure}

\begin{figure}[h]
  \centering
  \includegraphics[width=\linewidth]{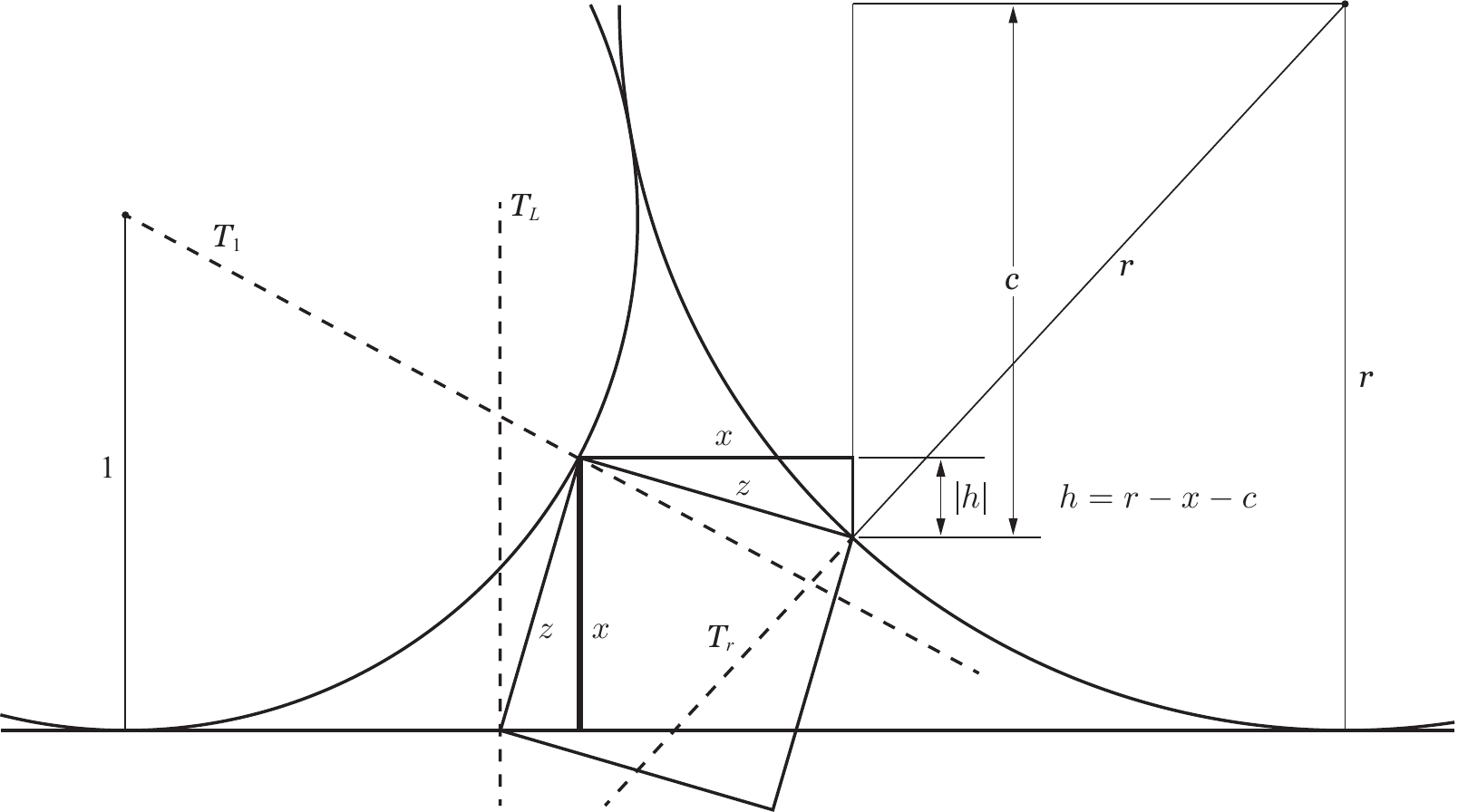}
  \caption{Geometry showing that $z$ given by \eqref{side_length_equation} is not a minimum when $\xt>\xt_2$.}
  \label{largex}
\end{figure}

Figures \ref{smallx} and~\ref{largex} illustrate geometric interpretations of $z$ as given by equation~\eqref{side_length_equation} when $\xt$ belongs to the intervals $(1-1/\sqrt{2},\xt_1)$ and $(\xt_2,1)$, respectively.  Here, the three consecutive vertices still lie on the line and the two circles, although the squares are no longer inscribed.  In both cases, the same equations as in Figure~\ref{equations} still apply, and lead to \eqref{side_length_equation}.  The fact that $z$ strictly decreases when $\xt\in (1-1/\sqrt{2},\xt_1)$ and strictly increases when $\xt\in (\xt_2,1)$ can been seen by the same reasoning as in Lemma~\ref{lem6monotonic}, this time using $\tta$, $\ttb$, and $\ttl$ defined as before except that the ``point of contact'' is specifically that with the relevant vertex, as shown in Figures \ref{smallx} and~\ref{largex}.  The reasoning then uses a modified form of Lemma~\ref{triangle} that applies to these squares with consecutive vertices on the line and circles, instead of to inscribed squares, and whose proof is analogous to that of Lemma~\ref{triangle}.
\end{proof}

\begin{rem}[approximating $\mu(r)$]
From Proposition~\ref{minsidelength}, one can deduce an algorithm for computing an approximation of $\mu(r)$ given the radius $r$. Indeed, it suffices for this purpose to minimize the function $z$, which can be achieved by applying root-finding methods to $z'$.
\end{rem}

\begin{prop}\label{polyequation}
In the result of Proposition~\ref{minsidelength}, i.e.,\ that the minimum side length is given by $\mu(r) = z(x_m)$,
the number $\xt_m$ is a root of the 10th degree polynomial
\begin{equation}\label{poly_equation}
\fb(\fe\fh + \ff\fg - 2\fc\fd)^2 - (\fc^2\fb + \fd^2 - \fe\fb\fg - \ff\fh)^2,
\end{equation}
where, letting $\rt=\sqrt{r}$,
\begin{align*}
\fe(\xt) &= -2\xt + 4\rt \\
\ff(\xt) &= (4\rt - 2)\xt + \rt^4 - 4\rt^2 \\
\fc(\xt) &= (6\rt - 3)\xt + \rt^4 - 2\rt^3 - 3\rt^2 \\
\fb(\xt) &= - \xt^2 +2\xt \\
\fd(\xt) &= 4\xt^3 - (2\rt^2 + 6\rt + 7)\xt^2 + (2\rt^3 + 3\rt^2 + 10\rt)\xt - 2\rt^3 \\
\fg(\xt) &= 8\xt^3 + (-4\rt^2 - 16)\xt^2 + (4\rt^3 - 2\rt^2 + 6)\xt - 4\rt^3 + 8\rt^2 - 4\rt \\
\fh(\xt) &= (4\rt^2 - 16\rt)\xt^3 + (-\rt^4 + 4\rt^3 -10\rt^2 + 40\rt)\xt^2 \\
           &\qquad\qquad \qquad \qquad \qquad \qquad  + (2\rt^4 - 8\rt^3 + 4\rt^2 - 20\rt + 2)\xt + 4\rt^2.
\end{align*}
\end{prop}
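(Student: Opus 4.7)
By Proposition~\ref{minsidelength}, $x_m$ is an interior minimum of $z$ on the open interval $(1 - 1/\sqrt{2}, 1)$, so it satisfies $(z(x_m)^2)' = 0$. The plan is to transform this calculus condition into a polynomial identity by successively rationalizing the two nested square roots appearing in equation~\eqref{side_length_equation}.

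Setting $k = \sqrt{r}$, $A = \sqrt{2x - x^2}$, $B = 2k - x - A$, and $C = \sqrt{k^4 - B^2}$, so that $z^2 = x^2 + (k^2 - x - C)^2$, I would first verify the identity $C^2 = f_2 + A f_1$ by direct expansion (using $A^2 = f_4 = 2x - x^2$), thereby pinning down the roles of $f_1$ and $f_2$. Next, computing $(z^2)'$ via $A' = (1-x)/A$, $B' = -1 - A'$, and $C' = -BB'/C$, and then clearing denominators by multiplying through by $AC$, I would obtain an equation of the shape
$$C\,(A\alpha_1 - \alpha_2) \;=\; -(A f_3 + f_5),$$
where the auxiliary polynomials $\alpha_1 = 2x - (k-1)^2$ and $\alpha_2 = -2x^2 + (2k+3)x - 2k$ emerge naturally during the reduction, and $f_3, f_5$ are read off as the coefficients of $A$ and of $1$ on the right-hand side.

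Squaring to remove $C$, substituting $C^2 = f_2 + A f_1$ and $A^2 = f_4$, produces an equation linear in $A$, say $M + AN = 0$, whose coefficients depend on $\alpha_1, \alpha_2$ and the $f_i$ already identified. The two elementary identities
$$f_6 = -2\alpha_1\alpha_2, \qquad f_7 = f_4\,\alpha_1^2 + \alpha_2^2,$$
checked by direct polynomial expansion, then rewrite $N = f_1 f_7 + f_2 f_6 - 2 f_3 f_5$ and $M = -(f_3^2 f_4 + f_5^2 - f_1 f_4 f_6 - f_2 f_7)$. A final squaring of $AN = -M$ combined with $A^2 = f_4$ yields precisely the polynomial~\eqref{poly_equation}, which therefore vanishes at $x = x_m$.

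The hard part is bookkeeping rather than insight: every step is a routine application of the chain rule or binomial expansion, but recognizing that the combinations of $\alpha_1, \alpha_2$ that surface after squaring coalesce exactly into $f_6$ and $f_7$, and tracking the final assembly of $M$ and $N$, is error-prone by hand and is most safely confirmed in a computer algebra system. A helpful consistency check along the way is provided by degree counts in $x$: the leading $16 x^6$ contributions in $f_5^2$ and $f_1 f_4 f_6$ cancel, so that $f_3^2 f_4 + f_5^2 - f_1 f_4 f_6 - f_2 f_7$ has $x$-degree $5$, while $f_1 f_7 + f_2 f_6 - 2 f_3 f_5$ has $x$-degree $4$, making both summands of the claimed polynomial degree $10$ in $x$, consistent with the statement.
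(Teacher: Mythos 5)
Your proposal is correct and follows essentially the same route as the paper: differentiate $z^2$, set the derivative to zero, clear the inner radical $C=\sqrt{g}$ using $C^2=f_2+Af_1$, then clear the outer radical $A=\sqrt{f_4}$, identifying $f_6=-2\alpha_1\alpha_2$ and $f_7=f_4\alpha_1^2+\alpha_2^2$ along the way. The notation differs slightly (the paper works directly with $\sqrt{f_4}$ and $\sqrt{g}$ rather than $A,B,C$), but the two rationalization steps, the auxiliary identities, and the final degree-count argument are the same.
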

\begin{proof}
By Proposition~\ref{minsidelength}, $\mu(r)=z(\xt_m)$, where $z(\xt)$ is given by equation~\eqref{side_length_equation}, and $\xt_m$ is the unique point in the interval $(1-1/\sqrt{2},1)$ such that $dz/d\xt=0$, or equivalently $dA/d\xt=0$ where $A(\xt)=(z(\xt))^2$.

It remains to show that $\xt_m$ is a root of \eqref{poly_equation}.  Let
\[
\fa(\xt) = (-2\xt + 4\rt)\sqrt{\fb(\xt)} + (4\rt - 2)\xt + \rt^4 - 4\rt^2,
\]
so that expansion in \eqref{side_length_equation} gives
\begin{multline*}
A(\xt) = 2\Big((\xt - \rt^2)\sqrt{\fa(\xt)} + (-\xt + 2\rt)\sqrt{\fb(\xt)} + \xt^2 \\
 + (-\rt^2 + 2\rt - 1)\xt + \rt^4 -2\rt^2\Big).
\end{multline*}
Note that
\begin{equation*}
\frac{d\fa}{d\xt} = \frac{(4\rt - 2)\sqrt{\fb(\xt)} + 4\xt^2 - (4\rt + 6)\xt + 4\rt}{\sqrt{\fb(\xt)}},
\end{equation*}
which leads to
\begin{multline*}
\frac{1}{2}\frac{dA}{d\xt} = \frac{(\xt- \rt^2)\left((2\rt-1)\sqrt{\fb(\xt)}+2\xt^2-(2\rt+3)\xt+2\rt\right)}{\sqrt{\fa(\xt)}\sqrt{\fb(\xt)}} + \sqrt{\fa(\xt)} \\
                                    + \frac{\xt^2-(2\rt+1)\xt+2\rt}{\sqrt{\fb(\xt)}} - \sqrt{\fb(\xt)} + 2\xt - \rt^2 + 2\rt - 1.
\end{multline*}
In view of the fact that $dA/d\xt=0$ at $x=\xt_m$,
set $\frac{1}{2} dA/d\xt = 0$, then multiply by $\sqrt{\fa(\xt)}\sqrt{\fb(\xt)}$ and separate terms with $\sqrt{\fa(\xt)}$ from the rest to obtain
\begin{multline*}
\left((2\xt - \rt^2 + 2\rt - 1)\sqrt{\fb(\xt)} + 2\xt^2 - (2\rt+3)\xt + 2\rt\right)\sqrt{\fa(\xt)} \\
= -\fc(\xt)\sqrt{\fb(\xt)} - \fd(\xt).
\end{multline*}
Note that $\fa(\xt) = \fe(\xt)\sqrt{\fb(\xt)} + \ff(\xt)$, so that squaring both sides and omitting ``$(\xt)$'' for readability results in
\begin{equation*}
\left(\fg\sqrt{\fb} + \fh\right)\left(\fe\sqrt{\fb} + \ff\right) = \fc^2\fb + 2\fc\fd\sqrt{\fb} + \fd^2.
\end{equation*}
Separating terms with $\sqrt{\fb}$ from the rest gives
\begin{equation*}
(\fe\fh + \ff\fg - 2\fc\fd)\sqrt{\fb} = \fc^2\fb + \fd^2 - \fe\fb\fg - \ff\fh.
\end{equation*}
Squaring both sides and rearranging leads to the final polynomial equation,
\begin{equation*}
\fb(\fe\fh + \ff\fg - 2\fc\fd)^2 - (\fc^2\fb + \fd^2 - \fe\fb\fg - \ff\fh)^2 = 0.
\end{equation*}
This polynomial has degree~10.  Terms of higher degree could arise only from $\fd^2$ and $\fe\fb\fg$, each of which has degree~6, but both $\fd^2$ and $\fe\fb\fg$ have $16\xt^6$ as their 6th degree term, so $\fd^2-\fe\fb\fg$ has degree~5.

In summary, $\xt_m$ is a root of the polynomial in \eqref{poly_equation}.
\end{proof}

\section{Nonexistence of a solution by radicals.}\label{algebraic_section}

In this section we prove that $\mu$ is not a radical function as defined in Section \ref{intro_section}. For convenience we work instead with the function $\lambda$ defined by $\lambda(c) = (\mu(c^2))^2$. It is intuitively clear that if $\mu$ is a radical function then $\lambda$ is also radical; a formal proof of this fact is given in a more general context in Lemma \ref{radical_lemma} below.  The proof of our main result, Theorem \ref{main_thm}, shows that $\lambda$ is not radical on any infinite subset of $[1,\infty)$. As a consequence we obtain the fact that $\mu$ cannot be radical on any such set. We refer the reader to \cite[Chapters 13, 14]{dummit-foote} for the algebraic background assumed in this section. 

\begin{lem}\label{radical_lemma}
Let $J$ be a nonempty subset of $[1,\infty)$, let $n\in\mathbb \Z^+$, and let $I=\sqrt[n]{J}$ be the set of positive $n$th roots of elements of $J$. Suppose that a function $f:J\to\R$ is radical, and define $g:I\to\R$ and $h:J\to\R$ by $g(c)=f(c^n)$ and $h(c)=(f(c))^n$. Then $g$ and $h$ are radical.\end{lem}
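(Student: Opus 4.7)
The plan is to start from a polynomial $q \in \C[k, x]$ witnessing that $f$ is radical --- so $q$ is nonzero, $q(c, f(c)) = 0$ for every $c \in J$, and $\Gal(q/\C(k))$ is solvable --- and from it manufacture polynomials $Q \in \C[k,x]$ and $P \in \C[k, y]$ that witness the radicality of $g$ and $h$, respectively. The vanishing conditions on $Q$ and $P$ will be transparent from their construction; the real work lies in controlling their Galois groups over $\C(k)$.

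For $g$, the natural candidate is $Q(k,x) := q(k^n, x)$, which is nonzero because the substitution $k \mapsto k^n$ is an injective endomorphism of $\C[k]$, and satisfies $Q(c, g(c)) = q(c^n, f(c^n)) = 0$ for every $c \in I$. To analyze $\Gal(Q/\C(k))$, I would set $t = k^n$ and view $Q$ as the polynomial $q(t, x)$ over the subfield $\C(t) \subseteq \C(k)$. If $M$ denotes the splitting field of $q(t,x)$ over $\C(t)$ inside a fixed algebraic closure of $\C(t)$, then the splitting field of $Q$ over $\C(k)$ equals the compositum $M \cdot \C(k)$, so the standard compositum theorem for Galois extensions yields an injection $\Gal(M \cdot \C(k)/\C(k)) \hookrightarrow \Gal(M/\C(t))$. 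The target is solvable by hypothesis, hence so is the source.

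For $h$ a direct substitution does not give what we want, so I would use a symmetrization trick and define $P \in \C[k, y]$ implicitly by
\[
P(k, z^n) \;=\; \prod_{\zeta^n = 1} q(k, \zeta z),
\]
where the product ranges over all $n$th roots of unity $\zeta \in \C$. Such a $P$ exists because the right-hand side is invariant under $z \mapsto \zeta z$, hence lies in $\C[k, z^n]$, and it is nonzero because each factor is. Specializing $z = f(c)$ and $y = h(c) = f(c)^n$, the $\zeta = 1$ factor is $q(c, f(c)) = 0$, so $P(c, h(c)) = 0$ for every $c \in J$. A brief computation shows that the roots of $P$ in $\overline{\C(k)}$ are precisely the $n$th powers of the roots of $q$, so the splitting field of $P$ over $\C(k)$ is contained in that of $q$; $\Gal(P/\C(k))$ is therefore a quotient of $\Gal(q/\C(k))$ and thus solvable.

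The step that most deserves care is the compositum argument for $g$: one must place $M$ and $\C(k)$ inside a common algebraic closure (of $\C(t)$), verify that $M/\C(t)$ is Galois (automatic in characteristic zero), and check that the restriction map $\Gal(M\C(k)/\C(k)) \to \Gal(M/\C(t))$ really is an injective homomorphism with image $\Gal(M/(M \cap \C(k)))$. Everything else --- nonzero-ness of $Q$ and $P$, the two vanishing identities, and the description of the roots of $P$ as $n$th powers of roots of $q$ --- should be a quick verification.
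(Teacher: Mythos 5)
Your proposal is correct, and at the level of constructions it coincides with the paper's: for $g$ you both take $Q(k,x)=q(k^n,x)$, and your $P$ is, up to the harmless scalar $\pm(\text{lead coeff of }q)^n$, exactly the paper's $\prod_i(x-\alpha_i^n)$ (since $\prod_{\zeta^n=1}(\zeta z-\alpha_i)=\pm(z^n-\alpha_i^n)$). Where you differ is in the justifications, and your route is a bit cleaner in two places. For $g$, the paper argues via the equivalent ``splitting field inside a radical extension'' characterization, extending the embedding $k\mapsto k^n$ of $\C(k)$ to the radical extension $R$ and tracking the image; you instead use the solvable-group characterization directly and appeal to the standard compositum injection $\Gal(M\C(k)/\C(k))\hookrightarrow\Gal(M/\C(t))$. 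Both are fine, and you correctly flag the one point that needs care (putting $M$ and $\C(k)$ inside a common algebraic closure of $\C(t)$). For $h$, the paper constructs $Q=\prod(x-\alpha_i^n)$ directly, which forces it to (a) restrict to monic squarefree $q$ ``the general case is similar,'' (b) argue that the coefficients land in $\C[k]$ via symmetric functions, and (c) give a specialization argument (via a ring homomorphism $\C[k,\alpha_1,\ldots,\alpha_d]\to\C$, citing Lang) to justify $Q(c,h(c))=0$. Your averaging trick $P(k,z^n)=\prod_{\zeta^n=1}q(k,\zeta z)$ dispenses with all three at once: no monic/squarefree hypothesis, the coefficients are in $\C[k]$ by inspection, and the vanishing of $P(c,h(c))$ is immediate because the $\zeta=1$ factor already vanishes. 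The description of the roots of $P$ as $n$th powers of roots of $q$, and hence the splitting-field containment, is a short check as you say. So: same constructions, but your proof of the $h$ case is genuinely tighter than the paper's.
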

\begin{proof}
We begin by showing that $g$ is radical. Let $q\in\C[k,x]$ be a nonzero polynomial satisfying $q(c,f(c))=0$ for every $c\in J$, and such that there is a radical extension $R/\C(k)$ containing a splitting field $S$ of $q$. Let $\varphi:\C(k)\to\C(k)$ be the embedding induced by the map $k\mapsto k^n$, and let $\Omega$ be an algebraic closure of $\C(k)$. By basic field theory (see \cite[Chapter V, \S 2, Theorem 2.8]{lang}), we may extend the map $\varphi$ to an embedding $\varphi:R\to\Omega$. Defining $Q(k,x)=q(k^n,x)$ we have $Q(c,g(c))=0$ for every $c\in I$. Moreover, since $Q$ is the polynomial obtained by applying $\varphi$ to the coefficients of $q$, the above observations imply that $Q$ splits in the field $\varphi(S)$, which is contained in the radical extension $\varphi(R)/\C(k)$. Thus $g$ is a radical function.

Next we show that $h$ is radical. The argument will be given assuming that $q$ is monic and has no repeated root; the general case can be proved similarly. Let $\alpha_1,\ldots,\alpha_d$ be the roots of $q$ in $S$, and let 
\[Q(k,x)=(x-\alpha_1^n)\cdots(x-\alpha_d^n).\]
Note that since $q$ has coefficients in $\C[k]$, the same holds for $Q$. (The elementary symmetric functions of $\alpha_1^n,\ldots,\alpha_d^n$ are polynomials in the elementary symmetric functions of $\alpha_1,\ldots,\alpha_d$.) Moreover, as explained below, one can show that $Q(c,h(c))=0$ for every $c\in J$. Since $Q$ splits in the field $\C(k,\alpha_1^n,\ldots,\alpha_d^n)\subseteq S\subseteq R$, this implies that $h$ is radical.

Fixing $c\in J$, the fact that $Q(c,h(c))=0$ can be seen heuristically first: Since
\[q(k,x)=(x-\alpha_1)\cdots(x-\alpha_d)\]
and $q(c,f(c))=0$, we must have $f(c)=\alpha_i(c)$ for some $i$, and thus $Q(c,h(c))=0$. This argument can be made rigorous by extending the map $k\mapsto c$ to a ring homomorphism $\C[k,\alpha_1,\ldots,\alpha_d]\to\C$, so that $\alpha_i(c)$ is well-defined. We refer the interested reader to Section 3 in \cite[Chapter VII]{lang} for the necessary tools.
\end{proof}

Next we prove two preliminary results needed to show that $\lambda$ is not radical.

\begin{lem}\label{p_irred}
Let $p(k,x)$ be the polynomial defined by~\eqref{poly_equation} considering $k$ and $x$ as indeterminates. As an element of the ring $\C(k)[x]$, the polynomial $p$ is irreducible and has Galois group isomorphic to the symmetric group $S_{10}$.
\end{lem}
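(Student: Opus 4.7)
The plan is to handle both claims simultaneously by identifying the geometric Galois group of $p$ over $\C(k)$ as $S_{10}$; once this is established, irreducibility of $p$ in $\C(k)[x]$ is automatic, because the Galois group acts transitively on the set of ten roots.

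The preparatory step is to expand the expression in \eqref{poly_equation} and write $p(k,x) = \sum_{i=0}^{10} a_i(k)\, x^i$ explicitly with $a_i \in \Z[k]$. This is a purely mechanical polynomial computation. Since the resulting polynomial has integer coefficients, we obtain an arithmetic version of the problem for free: after choosing a splitting field $L$ of $p$ over $\Q(k)$, the geometric Galois group $\Gal(L \cdot \C(k)/\C(k))$ embeds as a subgroup of the arithmetic group $\Gal(L/\Q(k))$ acting on the ten roots, so proving that the former is already $S_{10}$ will force the latter to be $S_{10}$ as well.

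The main step is a computation using Sutherland's algorithm, implemented in Magma \cite{sutherland, magma}, which is designed precisely to compute the geometric Galois group of a polynomial with coefficients in a rational function field in one variable. Applying this algorithm to $p$ and verifying that the output is $S_{10}$ constitutes the heart of the proof. As an independent sanity check on the arithmetic side, I would also specialize $k$ to several small integer values $k_0$, factor $p(k_0, x)$ modulo primes of good reduction, and extract cycle types of Frobenius elements via Dedekind's theorem. Each such cycle type lies in the arithmetic Galois group, and hence constrains the geometric one. Exhibiting a transposition together with enough additional cycles to rule out every proper transitive subgroup of $S_{10}$ would, by standard permutation-group-theoretic arguments, force the arithmetic group to be $S_{10}$; irreducibility of one specialization $p(k_0, x)$ over $\Q$ supplies the required transitivity.

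The main obstacle is the reliability and scale of the computation. The polynomial $p$ has coefficients of nontrivial degree in $k$, the geometric-Galois-group algorithm is intricate, and the cross-check requires care in choosing specializations that avoid bad reduction and produce distinct Frobenius cosets. Conceptually, however, the argument is clean: once the geometric Galois group is shown to equal $S_{10}$, both irreducibility of $p$ over $\C(k)$ and the identification of its Galois group follow in a single stroke.
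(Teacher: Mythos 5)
Your core argument is the paper's: compute the Galois group of $p$ over $\C(k)$ with Sutherland's Magma implementation, obtain $S_{10}$, and note that transitivity yields irreducibility over $\C(k)$. That part coincides with the published proof.

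The proposed Frobenius/Dedekind ``sanity check,'' however, does not actually check the statement being proved. You correctly observe that the geometric group $\Gal(L\cdot\C(k)/\C(k))$ sits (normally) inside the arithmetic group $\Gal(L/\Q(k))$, so that the geometric group being $S_{10}$ forces the arithmetic one to be $S_{10}$. But specializing $k\mapsto k_0$ and reading off cycle types modulo primes only produces elements of the arithmetic group (or of a decomposition subgroup thereof), so at best this route proves the arithmetic group is $S_{10}$. That does not determine the geometric group, which is then known only to be a normal subgroup of $S_{10}$, hence one of $\{1\}$, $A_{10}$, or $S_{10}$; in particular this reasoning cannot exclude $A_{10}$, in which case $p$ would still be irreducible over $\C(k)$ but the lemma's group identification would be false. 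To make the arithmetic data bear on the geometric group you would need a further input showing the field of constants of the splitting field is $\Q$ --- for instance, checking that the discriminant of $p$ in $x$ is not a rational constant times a square in $\Q(k)$ would rule out $A_{10}$. As written, the cross-check is a heuristic supplement, not an independent verification of the geometric Galois group.
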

\begin{proof}
We rely on a computation carried out using the computer algebra system \textsc{Magma}; the code for our computation is available in the supplemental online material. Constructing $p(k,x)$ as an element of the ring $\Q(k)[x]$, we use Sutherland's algorithm \cite{sutherland} to compute a permutation representation of the Galois group of $p$ over $\C(k)$, and we obtain the group $S_{10}$. It follows that the Galois group acts transitively on the roots of $p$, so $p$ is irreducible over $\C(k)$.
\end{proof}

By Proposition \ref{minsidelength} we may regard $x_m$ as a function of $r$ defined on the interval $[1,\infty)$, and moreover, we have
\begin{equation}\label{mu_z_xm}
\mu(r)=z(x_m(r))\quad\text{for all}\quad r\ge 1.
\end{equation}

For convenience we will make the change of variable $k=\sqrt r$ and work instead with the function $\xi:[1,\infty)\to\R$ be defined by $\xi(k)=x_m(k^2)$. From Proposition \ref{polyequation} we deduce that
\begin{equation}\label{P_xi_ppty}
p(k,\xi(k))=0\quad\text{for all}\quad k\in [1,\infty).
\end{equation}
Furthermore, \eqref{mu_z_xm} implies that $\lambda(k)=\left(z(\xi(k))\right)^2$. Hence, writing $\xi$ for $\xi(k)$, we have
\[\lambda(k)=\xi^2 + \left(k^2 - \xi - \sqrt{k^4 - \left(2k - \xi - \sqrt{2\xi - \xi^2}\right)^2}\right)^2.\]

By manipulating the equation above we obtain a polynomial $h\in\Q[k,x,y]$ with the property that
\begin{equation}\label{h_xi_lambda_property}
h(k,\xi(k),\lambda(k))=0\quad\text{for all}\quad k\in [1,\infty).
\end{equation}
Explicitly, $h$ is given by the formula
\begin{multline*}h(k,x,y)=\left((y - x^2 - c_1^2 - c_2 + c_3^2 + c_4)^2 + 4c_3^2c_4 - 4c_1^2c_2 + 4c_1^2c_3^2 + 4c_1^2c_4\right)^2 \\
- c_4\left(8c_1^2c_3 + 4c_3(y - x^2 - c_1^2 - c_2 + c_3^2 + c_4)\right)^2,
\end{multline*}
where $c_1=k^2-x$, $c_2=k^4$, $c_3=2k-x$, and $c_4=2x-x^2$.

\begin{lem}\label{alpha_beta_extensions}
Let $\Omega$ be an algebraic closure of the field $\C(k)$. Suppose that $\alpha,\beta\in\Omega$ satisfy $p(k,\alpha)=h(k,\alpha,\beta)=0$. Then $\C(k,\alpha)\subseteq\C(k,\beta)$.
\end{lem}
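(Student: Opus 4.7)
The plan is to prove $\alpha \in \C(k,\beta)$, which is equivalent to the desired inclusion. Since $\alpha$ satisfies $p(k,\alpha) = 0$ and $h(k,\alpha,\beta) = 0$, the minimal polynomial of $\alpha$ over $\C(k,\beta)$ divides the greatest common divisor of $p(k,x)$ and $h(k,x,\beta)$ in $\C(k,\beta)[x]$. The strategy is to show that this gcd is linear in $x$, so that $\alpha$ can be read off as a rational expression in $k$ and $\beta$.

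The main tool is the resultant $R(k,y) := \Res_x(p(k,x), h(k,x,y)) \in \C(k)[y]$, whose roots in $y$ are exactly the values at which $p(k,x)$ and $h(k,x,y)$ share a common root in $\Omega$. Since $\alpha$ witnesses such sharing at $y = \beta$, we have $R(k,\beta) = 0$. After factoring $R$ into monic irreducibles $P_1,\ldots,P_s$ over $\C(k)[y]$, let $P_i$ be the factor satisfied by $\beta$; up to a unit, $P_i$ is the minimal polynomial of $\beta$ over $\C(k)$, giving an isomorphism $\C(k,\beta) \cong \C(k)[y]/(P_i)$ sending $y \mapsto \beta$.

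The key computational step, to be carried out in \textsc{Magma} as with Lemma \ref{p_irred}, is the following: for each $i$, compute $\gcd(p(k,x), h(k,x,y))$ in the polynomial ring $K_i[x]$, where $K_i := \C(k)[y]/(P_i)$. The claim to verify is that each such gcd has degree one in $x$, say of the form $x - \phi_i$ with $\phi_i \in K_i$. Transporting via the isomorphism $K_i \cong \C(k,\beta)$ for the relevant index then yields that the gcd of $p(k,x)$ and $h(k,x,\beta)$ in $\C(k,\beta)[x]$ is $x - \phi_i(k,\beta)$; since $\alpha$ is a root of this gcd, $\alpha = \phi_i(k,\beta) \in \C(k,\beta)$, which is the conclusion.

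The main obstacle is verifying the linearity of each of these gcds; this is a finite but delicate computation that must succeed for every irreducible factor of $R$. I expect it to go through: Proposition \ref{minsidelength} identifies a distinguished geometric branch $\xi(k)$, and the algebraic analogue should be that, on each component of the resultant locus, a unique critical $\alpha$ corresponds to each $\beta$. Should some factor yield a higher-degree gcd, a fallback would be to invoke the maximality of $S_9$ in $S_{10}$ from Lemma \ref{p_irred}, which forces $\C(k,\alpha)/\C(k)$ to have no proper intermediate subfields; then $\C(k,\alpha) \cap \C(k,\beta) \in \{\C(k), \C(k,\alpha)\}$, reducing the problem to ruling out the former alternative by additional input from the specific form of $h$.
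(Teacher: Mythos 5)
Your strategy is sound and genuinely different from the paper's: you propose to adjoin $\beta$ first, identify its minimal polynomial as an irreducible factor of the resultant $R(k,y)=\Res_x(p,h)$, and then exhibit $\alpha$ as a rational expression in $k,\beta$ by showing that $\gcd(p(k,x),h(k,x,\beta))$ is linear over $\C(k,\beta)$. The paper instead adjoins $\alpha$ first, setting $F=\Q(k)[x]/(p)$, factors $h(k,\alpha,y)$ (a quartic in $y$) over $F$, and then uses degree counts: each root $\beta$ is either in $F$ (degree $10$ over $\Q(k)$, forcing $F=\Q(k,\beta)$) or quadratic over $F$ (degree $20$, forcing $F(\beta)=\Q(k,\beta)\supseteq F$). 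The two routes are dual, and both are legitimate; yours has the advantage of making $\alpha\in\C(k,\beta)$ completely explicit. However, two points deserve flagging. First, you have not actually carried out the computation, and your approach is markedly heavier than the paper's: $h$ has degree $8$ in $x$ and degree $4$ in $y$, so $R(k,y)$ has degree up to $40$ in $y$; you must factor this over $\Q(k)$ and then run gcd computations over each resulting algebraic function field of degree up to $\approx 20$, versus the paper's single factorization of a quartic over a degree-$10$ function field. Second, your claim that \emph{every} factor $P_i$ yields a linear gcd is a little stronger than needed and could in principle fail for ``spurious'' factors of $R$ coming from a vanishing leading coefficient of $h$ in $x$; the correct check is that the gcd is linear for each factor $P_i$ whose associated gcd is nonconstant, since only those factors can be the minimal polynomial of a $\beta$ satisfying the lemma's hypotheses. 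Note also that if you work over $\Q(k)$ in \textsc{Magma} (as you must), an irreducible factor over $\Q(k)$ may split further over $\C(k)$; this is harmless, because gcds are stable under field extension and the evaluation $y\mapsto\beta$ is a $\Q(k)$-embedding of the abstract function field into $\C(k,\beta)$, but it is worth saying so.

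Your fallback sketch does not actually buy you anything. Maximality of $S_9$ in $S_{10}$ does reduce $\C(k,\alpha)\cap\C(k,\beta)$ to the dichotomy $\{\C(k),\C(k,\alpha)\}$, but the second branch \emph{is} the conclusion, and the ``additional input from the specific form of $h$'' needed to rule out the first branch is exactly the nontrivial content of the lemma, left unspecified. Moreover, if the gcd computation were to return a factor of degree $\ge 2$, this would mean $\C(k,\beta)$ contains two distinct roots of $p$; with Galois group $S_{10}$ that forces $[\C(k,\beta):\C(k)]\ge 90$, while $[\C(k,\beta):\C(k)]\le\deg_y R\le 40$, a contradiction --- so a higher-degree gcd would signal an error in the setup rather than a situation the fallback could repair. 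In short: correct and workable plan, not yet a proof, and computationally more demanding than the route taken in the paper.
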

\begin{proof}
We rely on a number of computations in \textsc{Magma}; the code used for all computations is available in the supplemental online material. Let $F=\Q(k,\alpha)$. To prove the lemma it suffices to show that $F\subseteq\Q(k,\beta)$. Regarding $p$ as an element of the ring $\Q(k)[x]$, note that $p$ is irreducible by Lemma \ref{p_irred}, and $\alpha$ is a root of $p$ by hypothesis, so we may identify $F$ with the field $\Q(k)[x]/(p(k,x))$. Constructing $F$ in \textsc{Magma} and factoring\footnote{The algorithm used by \textsc{Magma} to factor polynomials over algebraic function fields is discussed in \cite{trager}.} the polynomial $h(k,\alpha, y)$ over~$F$, we find that this polynomial has two roots in $F$ and two roots that are quadratic over $F$. Note that $\beta$ must be one of these four roots since $h(k,\alpha,\beta)=0$.

Suppose that $\beta\in F$. Computing the minimal polynomial of $\beta$ over $\Q(k)$ we obtain a polynomial of degree 10; thus $[\Q(k,\beta):\Q(k)]=10$. Since $\beta\in F$ and $$[F:\Q(k)]=\deg(p)=10,$$ this implies that $F=\Q(k,\beta)$. In particular, $F\subseteq\Q(k,\beta)$ as desired.

Now suppose that $\beta$ is quadratic over $F$. Then a minimal polynomial computation shows that $[\Q(k,\beta):\Q(k)]=20$. Since $[F(\beta):F]=2$ and $[F:\Q(k)]=10$, this implies that $F(\beta)=\Q(k,\beta)$, so again $F\subseteq\Q(k,\beta)$.
\end{proof}

We can now prove the main theorem of this article.

\begin{thm}\label{main_thm}
There is no infinite subset $J\subseteq [1,\infty)$ such that $\mu:J\to\R$ is radical.
\end{thm}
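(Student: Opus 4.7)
The plan is to argue by contradiction, combining Lemma~\ref{radical_lemma} with Lemmas~\ref{p_irred} and~\ref{alpha_beta_extensions} to force $\Gal(p/\C(k))$ to be solvable and thereby contradict Lemma~\ref{p_irred}, which identifies this Galois group as the non-solvable symmetric group~$S_{10}$.

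Suppose for contradiction that $\mu\colon J\to\R$ is radical on some infinite $J\subseteq[1,\infty)$. I apply Lemma~\ref{radical_lemma} twice, each time with $n=2$: first to pass from $\mu$ to the function $c\mapsto\mu(c^2)$ on $I=\sqrt{J}$, and then to square that function, obtaining that $\lambda$ itself is radical on the infinite set $I$. So there exists a nonzero $Q(k,y)\in\C[k,y]$ with solvable Galois group over $\C(k)$ such that $Q(c,\lambda(c))=0$ for every $c\in I$. Following the setup of Lemma~\ref{alpha_beta_extensions}, I then fix $\alpha,\beta\in\Omega$ (with $\Omega$ an algebraic closure of $\C(k)$) that are algebraic counterparts of the analytic branches $\xi$ and $\lambda$, so that $p(k,\alpha)=0$, $h(k,\alpha,\beta)=0$, and the minimal polynomial $m(k,y)\in\C[k,y]$ of $\beta$ over $\C(k)$ satisfies $m(k,\lambda(k))=0$ identically on $[1,\infty)$.

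The central step is to promote the numerical identity ``$Q(c,\lambda(c))=0$ on $I$'' to the algebraic identity $Q(k,\beta)=0$ in $\Omega$, equivalently, to show that $m\mid Q$ in $\C(k)[y]$. If instead $m\nmid Q$, then $\gcd(m,Q)=1$ in $\C(k)[y]$ since $m$ is irreducible there, so the B\'ezout identity together with clearing of denominators yields $A,B\in\C[k,y]$ and a nonzero $d\in\C[k]$ with $AQ+Bm=d$. Specializing at $(c,\lambda(c))$ for $c\in I$, both $Q(c,\lambda(c))$ and $m(c,\lambda(c))$ vanish, so $d(c)=0$ on the infinite set $I$, contradicting that a nonzero univariate polynomial has only finitely many roots.

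Once $Q(k,\beta)=0$ is established, the conclusion follows rapidly. Because $Q$ has solvable Galois group over $\C(k)$, its splitting field lies in a radical extension $R/\C(k)$, so $\C(k,\beta)\subseteq R$. Applying Lemma~\ref{alpha_beta_extensions} then gives $\C(k,\alpha)\subseteq R$; taking the Galois closure of $R/\C(k)$, which remains radical since $\C(k)$ contains all roots of unity, yields a radical Galois extension containing the splitting field of $p$. Therefore $\Gal(p/\C(k))$ is a quotient of a solvable group and so solvable, contradicting Lemma~\ref{p_irred}. The main obstacle is the middle step: translating infinitely many numerical vanishings into the algebraic identity $Q(k,\beta)=0$. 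The B\'ezout argument handles this cleanly, bypassing any need for accumulation points or real-analytic continuation and relying only on the algebraicity of $\lambda$ over $\C(k)$ together with the basic fact that a nonzero polynomial in $\C[k]$ has finitely many roots.
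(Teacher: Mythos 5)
Your overall strategy is correct and uses the same three key lemmas as the paper (Lemmas~\ref{radical_lemma}, \ref{p_irred}, and \ref{alpha_beta_extensions}), and your Bézout argument for converting an infinite family of pointwise vanishings into a divisibility $m\mid Q$ in $\C(k)[y]$ is a valid substitute for the paper's observation that a nonzero one-variable polynomial over $\C$ with infinitely many roots must be zero. But there is a genuine gap at the step where you ``fix $\alpha,\beta\in\Omega$ ... that are algebraic counterparts of the analytic branches $\xi$ and $\lambda$.'' Producing $\alpha$ is easy: $p$ is irreducible, so any root $\alpha$ of $p$ in $\Omega$ has minimal polynomial $p$, and $p(c,\xi(c))=0$ for all $c$. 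Producing $\beta$ is the hard part, and you simply assert it. You need a root $\beta$ of $h(k,\alpha,y)$ whose minimal polynomial $m$ over $\C(k)$ satisfies $m(c,\lambda(c))=0$ for (at least infinitely many) $c$. But $h(k,\alpha,y)$ has several roots with different minimal polynomials over $\C(k)$ --- Lemma~\ref{alpha_beta_extensions} itself notes that there are roots of degree $10$ and roots of degree $20$ over $\C(k)$ --- and the analytic branch $\lambda$ corresponds to only one of them, in a way that is not canonical and requires an argument to pin down. Identifying an analytic branch of an algebraic function with a specific element of $\Omega$ is exactly the subtlety your proof does not address.

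The gap is repairable, but interestingly the repair essentially reproduces the paper's argument in a different order. One can form the resultant $P(k,y)=\Res_x\bigl(h(k,x,y),\,p(k,x)\bigr)$, observe that $P(c,\lambda(c))=0$ for all $c\ge 1$ since $\xi(c)$ is a common root of $p(c,x)$ and $h(c,x,\lambda(c))$, factor $P$ into irreducibles over $\C(k)$, and use pigeonhole on the infinite set to isolate one irreducible factor $m$ vanishing at $(c,\lambda(c))$ for infinitely many $c$; a root $\beta$ of that factor is then a root of $h(k,\alpha',y)$ for some root $\alpha'$ of $p$, and your Bézout argument and Lemma~\ref{alpha_beta_extensions} take over. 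The paper's actual proof does something slightly slicker: it first takes $f=\Res_y(h,q)$, deduces $f(c,\xi(c))=0$ on $I$, then takes $g=\Res_x(f,p)\in\C[k]$ and concludes $g\equiv 0$ because it vanishes on the infinite set $I$; this simultaneously produces $\alpha$ and $\beta$ with $q(k,\beta)=0$, bypassing both your existence assertion and the Bézout step. In short, your decomposition is sound and your algebraic closing argument is fine, but the existence of the right $\beta$ is the crux of the matter and must be proved, not assumed.
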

\begin{proof}
As above, let $\Omega$ denote an algebraic closure of the field $\C(k)$. By Lemma \ref{radical_lemma}, in order to prove the theorem it suffices to show that $\lambda$ is not radical on any infinite subset of $[1,\infty)$. Suppose for contradiction that $I\subseteq [1,\infty)$ is an infinite set such that $\lambda:I\to\R$ is radical. Then there is a nonzero polynomial $q(k,y)\in\C[k,y]$ whose Galois group over $\C(k)$ is solvable, and such that
\begin{equation}\label{q_mu_property}
q(k, \lambda(k))=0\quad\text{for all}\quad k \in I.
\end{equation}
Regarding $q$ and $h$ as elements of the ring $\C[k,x,y]$, let
\[f(k,x)=\Res_y\left(h(k,x,y), q(k,x,y)\right),\]
where $\Res_y$ denotes the resultant as polynomials in~$y$. By \eqref{h_xi_lambda_property} and \eqref{q_mu_property}, for every $k\in I$ the polynomials $h(k,\xi(k),y)$ and $q(k,\xi(k),y)$ have a common root, namely $\lambda(k)$; hence
\begin{equation}\label{f_xi_property}
f(k,\xi(k))=0\quad\text{for all}\quad k\in I.
\end{equation}

Similarly, letting
\[g(k)=\Res_x\left(f(k,x),p(k,x)\right)\in\C[k],\]
equations \eqref{P_xi_ppty} and \eqref{f_xi_property} imply that $g(k)=0$ for every $k\in I$. Since $I$ is an infinite set, we must have $g=0$. Therefore, $f$ and $p$ have a common root $\alpha\in\Omega$. Given that $f(k,\alpha)=0$, the definition of $f$ implies that $q(k,y)$ and $h(k,\alpha,y)$ have a common root $\beta\in\Omega$. Note that the assumptions in Lemma \ref{alpha_beta_extensions} are satisfied.

Let $N\subset\Omega$ be the splitting field of $q(k,y)$ over $\C(k)$ and let $F=\C(k,\alpha)$. By Lemma \ref{alpha_beta_extensions} we have $F\subseteq\C(k,\beta)$ and therefore $F\subseteq N$. Letting $L\subset\Omega$ be the splitting field of $p(k,x)$ over $\C(k)$, we have $L\subseteq N$ since $F\subseteq N$ and $L$ is the Galois closure of the extension $F/\C(k)$. Since the extension $L/\C(k)$ is Galois, the group $\Gal(L/\C(k))$ is a quotient of $\Gal(N/\C(k))$. The definition of $q(k,y)$ implies that the latter group is solvable, so the former is, too. This contradicts Lemma \ref{p_irred} (since the group $S_{10}$ is not solvable), and thus completes the proof of the theorem.
\end{proof}

\begin{acknowledgment}{Acknowledgments.}
JEH received support from the Colby College Research Grant Program. The authors thank Gerardo Lafferriere of the Fariborz Maseeh Department of Mathematics and Statistics at Portland State University for welcoming JEH as a Visiting Scholar during the final stages of this project.
\end{acknowledgment}

\begin{biog}
\item[Jan E. Holly]
\begin{affil}
Department of Mathematics and Statistics, Colby College, Waterville, ME 04901\\
Jan.Holly@colby.edu
\end{affil}

\item[David Krumm]
\begin{affil}
Mathematics Department, Reed College, Portland, OR 97202\\
dkrumm@reed.edu
\end{affil}
\end{biog}
\vfill\eject


\begin{thebibliography}{1}

\bibitem{magma}
 Bosma, W.,
 Cannon, J.,
 Playoust, C. (1997).
The Magma algebra system. I. The user language.
\textit{J. Symbolic Comput.}
24(3--4): 235--265.

\bibitem{coxeter}Coxeter, H. S. M. (1989).
\textit{Introduction to Geometry}, 2nd ed.
New York: John Wiley \& Sons, Inc.

\bibitem{dummit-foote}
Dummit, David S.,
Foote, Richard M. (2004).
 \textit{Abstract Algebra}, 3rd ed.
Hoboken, NJ: John Wiley \& Sons, Inc.

\bibitem{fukagawa-pedoe}
 Fukagawa, H.,
 Pedoe, D. (1989).
\textit{Japanese Temple Geometry Problems}.
Winnipeg, Canada: The Charles Babbage Research Centre.

\bibitem{fukagawa-rothman}
 Fukagawa, H.,
 Rothman, T. (2008).
\textit{Sacred Mathematics: Japanese Temple Geometry}.
Princeton, NJ: Princeton Univ. Press.

\bibitem{kinoshita}
 Kinoshita, H. (2018).
 An unsolved problem in the Yamaguchi's travell diary.
 \textit{Sangaku Journal of Mathematics.}
2: 43--53.
www.sangaku-journal.eu

\bibitem{lang}
Lang, S. (2002).
\textit{Algebra}, 3rd ed.
New York, NY: Springer-Verlag.

\bibitem{rothman}
 Rothman, T. (1998).
Japanese temple geometry.
\textit{Scientific American.}
278(5): 84--91.

\bibitem{sutherland}
 Sutherland, N. (2018).
Computations with Galois groups in Magma.
\textit{Computeralgebra Rundbrief.}
62: 16--21.
www.fachgruppe-computeralgebra.de/data/CA-Rundbrief/car62.pdf

\bibitem{trager}
 Trager, B. M. (1976).
Algebraic factoring and rational function integration.
In: Jenks, R. D., ed.
\textit{SYMSAC '76: Proceedings of the Third ACM Symposium on Symbolic and Algebraic Computation}.
New York: ACM, pp.~219--226.

\end{thebibliography}
\end{document}